\documentclass[11pt]{article}

\usepackage[utf8]{inputenc}
\usepackage[english]{babel}
\usepackage{amsmath,amsthm,amssymb,amsfonts}
\usepackage{mathrsfs}
\usepackage{hyperref}

\newtheorem{theorem}{Theorem}[section]
\newtheorem{proposition}[theorem]{Proposition}
\newtheorem{lemma}[theorem]{Lemma}
\newtheorem{corollary}[theorem]{Corollary}
\newtheorem{question}[theorem]{Question}
\theoremstyle{definition}
\newtheorem{definition}[theorem]{Definition}
\newtheorem{remark}[theorem]{Remark}

\theoremstyle{plain}
\newtheorem*{thmA}{Theorem A}
\newtheorem*{thmB}{Theorem B}
\newtheorem*{thmC}{Theorem C}

\newcommand{\I}{\mathcal{I}}
\newcommand{\J}{\mathcal{J}}
\newcommand{\K}{\mathcal{K}}
\newcommand{\R}{\mathcal{R}}
\newcommand{\Ipos}{\mathcal{I}^{+}}
\newcommand{\EDfin}{\mathcal{ED}_{\mathrm{fin}}}
\newcommand{\nwd}{\mathrm{nwd}}
\newcommand{\conv}{\mathrm{conv}}
\newcommand{\leK}{\le_{K}}
\newcommand{\nleK}{\nleq_{K}}

\newcommand{\leKB}{\le_{KB}}
\newcommand{\Q}{\mathbb{Q}}
\newcommand{\QI}{\mathbb{Q}_{\I}}

\title{Reductions and necessary conditions for tall Borel Ramsey ideals}
\author{Jos\'e de Jes\'us Pelayo G\'omez}
\date{July 13, 2026}

\begin{document}
\maketitle

\begin{abstract}
An ideal $\I$ on $\omega$ has the \emph{Ramsey property} if $\Ipos\to(\Ipos)^2_2$: every
$2$-colouring of the pairs of an $\I$-positive set has an $\I$-positive homogeneous
subset. Whether a tall \emph{Borel} ideal can have the Ramsey property is an open question
of Hru\v{s}\'ak, Meza-Alc\'antara, Th\"ummel and Uzc\'ategui \cite{HMTU2017}; a coanalytic
example exists in ZFC, so a negative answer must use definability essentially. Our main
theorem, a synthesis of the results of the paper, states that a tall Borel Ramsey ideal
admits \emph{no countable local reading}. Below every positive set, such an ideal is not a
countable intersection of topologically represented (or tall analytic $P$-) ideals, and
its quotient has no countable dense subset. Moreover, every quotient
name for a new real has uncountable width, the colouring witnessing non-selectivity of the
generic ultrafilter is never read continuously on a positive condition, and hereditary
tall subfamilies saturate every finite window of barrier dimensions coherently but never
all dimensions at once. We prove separately that a weakly selective $q^+$ ideal admits no
positive $\EDfin$-carrier. The converse question --- must Borelness force a properness-like
countable reading on some positive condition? --- is stated in three precise forms with
proved consequences: two of them would refute tall Borel Ramsey ideals outright, the third
the strictly weaker Nash--Williams class. The main question remains open.
\end{abstract}

\medskip
\noindent\textbf{MSC 2020:} 03E05, 03E15 (primary); 03E17, 03E35, 05D10 (secondary).
\par\smallskip
\noindent\textbf{Keywords:} tall Borel ideal; Ramsey property; Katětov order; barrier;
Nash--Williams ideal; quotient forcing; properness; continuous reading of names.

\section{Introduction}

We work with ideals $\I\subseteq\mathcal P(\omega)$ containing the finite sets, proper, and
identified with subsets of Cantor space via characteristic functions, so that descriptive
notions ($F_\sigma$, Borel, analytic, coanalytic) apply. An ideal is \emph{tall} if every
infinite set has an infinite subset in $\I$. We write $\Ipos=\mathcal P(\omega)\setminus\I$
and $\I\upharpoonright X=\{A:A\cap X\in\I\}$. Katětov reducibility $\I\leK\J$ means there is
$f$ with $f^{-1}[A]\in\J$ for all $A\in\I$; $\leKB$ is its finite-to-one refinement.

\begin{definition}
$\I$ has the \emph{Ramsey property} (here: positive/hereditary Ramsey) if
$\Ipos\to(\Ipos)^2_2$: for every $X\in\Ipos$ and every $c:[X]^2\to2$ there is
$Y\subseteq X$, $Y\in\Ipos$, with $c\upharpoonright[Y]^2$ constant. It has the
\emph{ordinary Ramsey property}, $\omega\to(\Ipos)^2_2$, if this holds for $X=\omega$ only.
\end{definition}

The basic Katětov characterisation, with $\R$ the random-graph ideal, is
$\I$ Ramsey $\iff$ $\R\nleK\I\upharpoonright X$ for all $X\in\Ipos$
\cite{HMTU2017}; see also \cite{CDU2026}.

\paragraph{The problem.} The following is Question~6.1 of \cite{HMTU2017} (also
\cite[Q.5.19]{HrusakKatetov2017}, the survey \cite{HrusakSurvey}, and restated as open in
\cite{CDU2026}).

\begin{question}[\cite{HMTU2017}]\label{q:main}
Is there a tall Borel ideal $\I$ with $\Ipos\to(\Ipos)^2_2$?
\end{question}

\paragraph{Antecedents and landscape.}
\begin{itemize}
\item No tall $F_\sigma$ ideal is Ramsey (Hru\v{s}\'ak's theorem
$\EDfin\leK\I\upharpoonright X$ locally, plus $\R\leK\EDfin$) \cite{HMTU2017,HrusakKatetov2017}.
\item No tall analytic $P$-ideal is Ramsey (Solecki's $\mathrm{Exh}(\varphi)$ form and a
submeasure block argument) \cite{HMTU2017}.
\item There is a coanalytic tall Ramsey ideal in ZFC (\cite[Thm~4.8]{HMTU2017}, modelled on
\cite[Ex.~2.8]{Farah1998}; the authors explicitly leave open, in their Remark~4.9, whether
such an ideal can be Borel); hence Question~\ref{q:main} genuinely separates Borel from
coanalytic, and cannot be settled by any property shared with that example.
\item Every tall analytic ideal has a tall $F_\sigma$ subideal \cite{GrebikVidnyanszky2023};
there is no minimal tall Borel ideal in $\leK$ \cite{GrebikHrusak2020}.
\item \cite{CDU2026} gives the current combinatorial toolkit: Ramsey collapses all finite
dimensions and colour counts (their Thm~3.13, Fact~3.2); simultaneous almost-homogeneity
(their Prop.~3.15, Thm~3.16); a canonical (Erd\H{o}s--Rado) form (their Thm~3.30); and a
ladder \emph{selective $\Rightarrow$ semiselective $\Rightarrow$ Galvin $\Rightarrow$
Nash--Williams $\Rightarrow$ Ramsey}, with no analytic tall Galvin ideal (their Prop.~4.12).
Their tall Nash--Williams non-semiselective ideal (Thm~4.44) is a \emph{construction} that
provably cannot be made coanalytic, because the set of fronts is $\Pi^1_1$-complete
(Prop.~4.45) and every analytic family of fronts is rank-bounded (Prop.~4.46). The
random-hypergraph reformulation via $\R^n_k$ is \cite[Prop.~3.4]{PelayoGomez}.
\end{itemize}

\paragraph{Main results.} The main theorem asserts that a tall Borel Ramsey ideal escapes
every \emph{countable local reading} of its structure, in four senses simultaneously. It
is a synthesis, proved by assembling the results of
Sections~\ref{sec:countable}--\ref{sec:core}; the full statement, with the exact
hypotheses, is Theorem~\ref{thm:master}.

\begin{thmA}[= Theorem~\ref{thm:master}]
Let $\I$ be a tall Borel Ramsey ideal and let $X\in\Ipos$. Then:
\begin{enumerate}
\item[\textup{(a)}] \textup{(representations)} $\I\upharpoonright X$ is not the
intersection of countably many ideals each failing the ordinary Ramsey property --- in
particular, of countably many topologically represented ideals --- nor of countably many
tall analytic $P$-ideals; in particular the Kwela--Sabok
representation of $\I\upharpoonright X$ has no countable subfamily with the same
intersection.
\item[\textup{(b)}] \textup{(quotient topology)} $\mathcal P(\omega)/\I$ has no countable
dense subset below $X$.
\item[\textup{(c)}] \textup{(forcing names)} every $\QI$-name for a new real has
uncountable width below $X$, and the colouring witnessing non-selectivity of the generic
ultrafilter is not read continuously into the ground model on any positive
$A\subseteq X$.
\item[\textup{(d)}] \textup{(barriers)} for every closed hereditary tall
$\mathcal K\subseteq\I$ and all dimensions $n_1<\dots<n_m$ there is one positive
$H\subseteq X$ with $[H]^{n_i}\subseteq\mathcal K$ for every $i\le m$, yet no positive
$H\subseteq X$ satisfies $[H]^n\subseteq\mathcal K$ for all $n$ simultaneously.
\end{enumerate}
\end{thmA}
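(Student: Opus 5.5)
Theorem A is a synthesis, so I will prove it clause by clause. Fix a tall Borel Ramsey ideal $\I$ and $X\in\Ipos$. Throughout I use two facts: the Katětov characterisation ($\R\nleK\I\upharpoonright Y$ for every positive $Y$), and heredity, meaning that $\I\upharpoonright Y$ is again tall, Borel and Ramsey for every positive $Y$. These let me pass freely to positive subsets of $X$.

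For (a), suppose $\I\upharpoonright X=\bigcap_n \J_n$ with each $\J_n$ failing $\omega\to(\J_n^+)^2_2$. For each $n$ choose a colouring $c_n:[\omega]^2\to 2$ with no $\J_n$-positive homogeneous set. Using Ramsey-ness of $\I\upharpoonright X$ repeatedly, I build a $\subseteq$-decreasing sequence $X\supseteq Y_0\supseteq Y_1\supseteq\cdots$ of $\I$-positive sets with $Y_n$ homogeneous for $c_n$. Each $Y_n$ is $\I$-positive, hence positive for some $\J_m$, but I need it positive for $\J_n$ itself, and this is not automatic. The fix is to diagonalise over the countable family $\{(n,c_n)\}$ via a fusion argument: I combine all the $c_n$ into a single colouring with countably many colours (or into a canonical Erdős--Rado form, [CDU, Thm 3.30]) and use that Ramsey collapses colour counts ([CDU, Fact 3.2], together with almost-homogeneity, Prop. 3.15/Thm 3.16). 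This gives one positive $Y$ that is almost homogeneous for every $c_n$. Since $Y\notin\bigcap\J_n$, there is an $n$ with $Y\in\J_n^+$, and removing a finite set then yields a $\J_n^+$ set homogeneous for $c_n$, a contradiction. The special cases follow from known facts. Topologically represented ideals are $F_\sigma$-like or contain $\EDfin$ locally, so they fail ordinary Ramsey by Hrušák's theorem. For tall analytic $P$-ideals I use the submeasure block argument of [HMTU], in its ordinary (not hereditary) form. The Kwela--Sabok statement is then immediate.

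For (b), suppose $\{D_n\}$ is dense below $X$ in $\mathcal P(\omega)/\I$. Then $\J_n=\{A: A\cap D_n\in\I\}\supseteq\I\upharpoonright X$, and density gives $\I\upharpoonright X=\bigcap_n \J_n$. Each $\J_n$ is not tall because $D_n$ has no infinite subset in $\J_n$, so I cannot apply (a) directly. Instead I argue with a colouring: fix a random-graph-type colouring splitting every $D_n$ into positive pieces (possible since $\I$ is Borel, hence not a maximal-like ideal on $D_n$). A positive homogeneous $Y\subseteq X$ contains some $D_n$ modulo $\I$, and this contradicts the choice of colouring. For (c), a new real of countable width below $X$ is decided by countably many antichains of size $\le\aleph_0$. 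Their elements generate a countable family that is dense below some positive $A$, which contradicts (b) applied to $A$. The continuous-reading claim reduces the same way: a continuous reading on $A$ would produce a countable dense set below $A$, again contradicting (b).

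For (d), the first half is finite-dimensional Ramsey [CDU, Thm 3.13]. Colour each $s\in[X]^{n_i}$ by whether $s\in\K$, and intersect successively, using heredity of $\K$. Tallness of $\K$ forces the homogeneous colour to be ``in $\K$'': an infinite homogeneous set of colour ``out'' contains an infinite subset in $\K$, all of whose $n$-subsets lie in $\K$ by heredity. For the second half, if $[H]^n\subseteq\K$ for all $n$, then $[H]^{<\omega}\subseteq\K$, and closedness gives $\mathcal P(H)\subseteq\K\subseteq\I$, so $H\in\I$, a contradiction. The main obstacle is the diagonal step in (a), namely obtaining a single positive set that is homogeneous simultaneously for countably many colourings. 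I would try the simultaneous almost-homogeneity result of [CDU] first.
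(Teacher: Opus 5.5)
The genuine gap is in (c): you derive both halves from (b), but neither implication holds, and the first runs the wrong way. A countable dense subset below $X$ makes every antichain below $X$ countable, so it gives countable width. The converse fails. Countable width of one name $\dot r$ says only that each $B_{\mathcal A_n}(X)$ is countable. The conditions in $\bigcup_n B_{\mathcal A_n}(X)$, together with their Boolean combinations, decide $\dot r$, not the generic filter, so nothing makes them dense below a positive set. (Compare a product of Cohen forcing with a forcing that has no countable dense subset: the name for the Cohen real has finite deciding antichains.)

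The missing idea is the coherence lemma \textup{(F2)} of Lemma~\ref{lem:toolkit}, as used in Theorem~\ref{thm:reduction}:
\begin{itemize}
\item refine the $B_{\mathcal A_n}(X)$ to a refining sequence of decompositions of $X$;
\item a positive pseudo-intersection $Y$ of a branch would satisfy $Y\le_{\I}A^n_{k(n)}$ for all $n$, and hence force $\dot r=\bigcup_n\sigma_{A^n_{k(n)}}\in V$;
\item so every such $Y$ lies in $\I$, which gives $\conv\leK\I\upharpoonright X$, and then $\R\leK\I\upharpoonright X$ by \textup{(F1)}.
\end{itemize}
Likewise, a continuous reading $A\Vdash\dot c\upharpoonright[A]^2=c\upharpoonright[A]^2$ with $c\in V$ produces no dense family at all. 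The argument you need is Proposition~\ref{prop:noreflect}. By the Ramsey property below $A$, the $c$-homogeneous positive subsets of $A$ are dense below $A$. So the generic ultrafilter contains one, which is then $\dot c$-homogeneous, contradicting the choice of $\dot c$.

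Two further steps need repair.

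In (a), your core argument is the paper's proof of Theorem~\ref{thm:int-nonramsey}. Cite \cite[Prop.~3.15]{CDU2026} directly, and drop the idea of merging the $c_n$ into one colouring with infinitely many colours, for which Ramsey's theorem fails (consider $\{i,j\}\mapsto\min\{i,j\}$). That argument needs each $\J_n$ to fail $\omega\to(\J_n^+)^2_2$ outright. Your justification for topologically represented ideals does not deliver this. Such ideals need not be $F_\sigma$ ($\nwd$ is represented), and Hru\v{s}\'ak's theorem only gives $\EDfin\leK\J\upharpoonright Y$ for some positive $Y$, i.e.\ failure of the hereditary property. The paper instead uses the lexicographic colouring of Lemma~\ref{lem:toprep-nonramsey}, whose homogeneous sets are convergent sequences with countable compact closure. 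Your $P$-ideal case, by contrast, works once written out. Tallness of $\mathrm{Exh}(\varphi)$ forces $\varphi(\{n\})\to0$, so the block argument runs on consecutive intervals covering $\omega$ and yields $\EDfin\leK\J$ globally. This is a different route from Theorem~\ref{thm:int-sub}.

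In (b), your reduction is correct: a positive homogeneous $Y\subseteq X$ has $D_n\setminus Y\in\I$ for some $n$. But the colouring it requires, one for which no $\I$-co-small subset of any $D_n$ is homogeneous, is only asserted. ``Splitting each $D_n$ into positive pieces'' is not that property. Also, $\J_n=\I\upharpoonright D_n$ is in fact tall; what blocks (a) is that $\J_n$ is Ramsey. The colouring can be supplied as follows:
\begin{itemize}
\item each filter $\{Z:D_n\setminus Z\in\I\}$ is Borel and proper, hence meager;
\item by Talagrand's characterization, there is an interval partition such that every member of that filter meets all but finitely many of its intervals;
\item choose one partition $(I_k)$ such that, for every $n$, almost every $I_k$ contains a whole interval of the $n$-th partition, so members of every such filter meet almost all $I_k$;
\item colour $\{a<b\}$ with $a\in I_k$, $b\in I_l$ by $1$ iff $k<l$ and $k$ is even.
\end{itemize}
Any set meeting almost all $I_k$ receives both colours. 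This completes (b) by a route more elementary than the paper's, which goes through Cohen forcing and \cite[Thm~4.4]{HMTU2017}.

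Part (d) is correct, and your successive shrinking is simpler than Lemma~\ref{lem:coord}.
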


In one sentence: no countable amount of definable data --- representing ideals, dense sets
of conditions, deciding antichains, barrier dimensions --- reads the positivity of $\I$
below any positive set. The coanalytic example shows that $\Pi^1_1$ codes genuinely enjoy
this freedom; Question~\ref{q:main} asks whether a $\Delta^1_1$ code can afford it.

Two further results are of independent interest. Theorem~B is the sharpest input to
face~(a); Theorem~C is a necessary condition of a different flavour and closes the
carrier route.

\begin{thmB}[= Theorems~\ref{thm:int-sub} and~\ref{thm:int-nonramsey}]
No countable intersection of tall analytic $P$-ideals, or of ideals failing the ordinary
Ramsey property --- in particular, of topologically represented ideals --- is Ramsey.
\end{thmB}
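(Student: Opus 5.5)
The statement has two halves, and I would prove them by different routes.

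\emph{The $P$-ideal half.} Here I would show that the class is closed under countable intersections, and then quote the known theorem that no tall analytic $P$-ideal is Ramsey \cite{HMTU2017}. Let $\J_n=\mathrm{Exh}(\varphi_n)$, with $\varphi_n$ lower semicontinuous submeasures (Solecki). Put
\[
\psi(A)=\sum_n 2^{-n}\min(\varphi_n(A),1).
\]
This $\psi$ is again a lower semicontinuous submeasure. Since the sum is dominated termwise, $\lim_k\psi(A\setminus k)=0$ holds iff $\lim_k\varphi_n(A\setminus k)=0$ for every $n$. Hence $\bigcap_n\J_n=\mathrm{Exh}(\psi)$, which is an analytic $P$-ideal.

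If the intersection $\I$ is tall, we are done by the quoted theorem. If $\I$ is not tall, there is an infinite $X$ with $\I\upharpoonright X$ equal to $\mathrm{Fin}$ on $X$. Then Ramsey's theorem makes $\I$ trivially Ramsey below $X$. So I would either read Theorem~B as being about tall intersections, which is how it enters Theorem~A, or record this caveat explicitly. The only routine check is that $\psi$ is lower semicontinuous, which holds because each truncated $\varphi_n$ is and the series converges uniformly.

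\emph{The non-ordinary-Ramsey half.} Let $\I=\bigcap_n\J_n$, and for each $n$ fix $c_n:[\omega]^2\to2$ with no $\J_n$-positive homogeneous set. The key observation is that $Y\in\Ipos$ iff $Y\in\J_k^+$ for some $k$. It therefore suffices to produce one $X\in\Ipos$ and one colouring $c$ of $[X]^2$ such that every $c$-homogeneous $Y\subseteq X$ is $c_k$-homogeneous on a $\J_k$-positive piece, for each $k$ with $Y\in\J_k^+$.

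My first attempt is a fusion. Assuming $\I$ is Ramsey, I would build $\omega=X_0\supseteq X_1\supseteq\cdots$, all in $\Ipos$, with $X_{n+1}$ homogeneous for $c_n$. This uses the Ramsey property with two colours at each step. Let $X_{n+1}\in\J_{k(n)}^+$. Since $X_{n+1}$ is homogeneous for $c_0,\dots,c_n$, we must have $k(n)>n$. So the positivity witness escapes to infinity, and the contradiction has to come at the limit.

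To handle the limit I would replace the sequence of colourings by a single colouring. Fix a map $h:\omega\to\omega$ taking every value infinitely often. Define $c(i,j)=c_{h(\min(i,j))}(i,j)$, or alternatively $c(i,j)=\bigoplus_{n\le h(\min(i,j))}c_n(i,j)$, which is a running parity. Then I would argue that a $c$-homogeneous $Y\in\J_k^+$ contains a $\J_k$-positive subset that is $c_k$-homogeneous.

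\emph{Main obstacle.} The difficulty is exactly this last transfer: $Y\cap h^{-1}[\{k\}]$ need not be $\J_k$-positive. I would attack it in one of two ways. The first is to choose $h$ adaptively, relative to the Katětov reductions $\R\leK\J_n$ given by the ordinary-Ramsey characterisation, so that the pieces on which $c$ reads $c_k$ are $\J_k$-large. The second, which I would try if the first fails, is to use the Ramsey property itself to pass to a positive $X$ on which all but finitely many $\J_n$ are irrelevant, that is, $\I\upharpoonright X=\bigcap_{n\le N}\J_n\upharpoonright X$. A finite intersection is then handled by the finite-colour version of Ramsey, using the product colouring $(c_0,\dots,c_N)$ with $2^{N+1}$ colours. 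Finding such an $X$ is where I expect the real work to lie.
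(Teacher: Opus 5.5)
Your $P$-ideal half is correct, and it is shorter than the paper's route. The paper redoes the block construction directly, which also yields the explicit local reduction $\EDfin\leK\I\upharpoonright Y$ below every positive $X$. You instead observe that $\bigcap_n\J_n=\mathrm{Exh}(\psi)$ is itself an analytic $P$-ideal and quote the known theorem of \cite{HMTU2017}. Your tallness caveat is void, though: a countable intersection of tall ideals is tall. Inside an infinite $X$ choose infinite $Y_0\supseteq Y_1\supseteq\cdots$ with $Y_n\in\J_n$; any infinite pseudo-intersection of the $Y_n$ lies in every $\J_n$. So the statement needs no restriction.

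The second half has a genuine gap: the step you call the main obstacle is never overcome. For the colouring $c(i,j)=c_{h(\min(i,j))}(i,j)$, a $c$-homogeneous $Y$ only gives $Y\cap h^{-1}(\{m\})\in\J_m$ for each $m$. That says nothing about $Y\in\J_k$ for the $k$ with $Y\in\J_k^+$. Neither remedy is carried out. The second one, finding a positive $X$ on which $\I\upharpoonright X$ is a finite intersection, is not something the Ramsey property visibly supplies. The fusion cannot be closed naively either, because a pseudo-intersection of a decreasing sequence of positive sets need not be positive.

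The missing ingredient, which the paper uses, is simultaneous almost-homogeneity \cite[Prop.~3.15]{CDU2026}. A Ramsey $\I$ has a single $H\in\Ipos$ and finite sets $F_m\subseteq H$ with $c_m$ constant on $[H\setminus F_m]^2$ for every $m$. Then $H\setminus F_m\in\J_m$, so $H\in\J_m$ for all $m$, hence $H\in\I$, a contradiction.

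Your single-colouring idea does yield this if the minimum bounds the indices rather than selecting one, and you work in dimension four. Put $d(\{i<j<k<l\})=1$ iff $c_m(\{j,k\})=c_m(\{k,l\})$ for all $m\le i$.
\begin{itemize}
\item No infinite $Z$ is $d$-homogeneous of colour $0$. With $i=\min Z$, take a $3$-element subset of $Z\setminus\{i\}$ homogeneous for the finite vector $(c_m)_{m\le i}$; it gives colour $1$.
\item Ramsey lifts to $\Ipos\to(\Ipos)^4_2$ \cite[Thm~3.13]{CDU2026}, so there is a positive $H$ homogeneous of colour $1$.
\item For each $m$, this gives $c_m(j,k)=c_m(k,l)$ for all $j<k<l$ in $H$ above $\min(H\setminus m)$. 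That forces $c_m$ to be constant on $[H\setminus(\min(H\setminus m)+1)]^2$.
\end{itemize}

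Finally, the ``in particular'' clause needs that topologically represented ideals fail $\omega\to(\J^+)^2_2$, and your proposal does not address this. The paper gets it from the lexicographic colouring (Lemma~\ref{lem:toprep-nonramsey}). Homogeneous sets enumerate lex-monotone sequences, which converge, so their closures are countable compact sets and lie in the $\sigma$-ideal $\K$.
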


\begin{thmC}[= Theorem~\ref{thm:carrier}]
If $\I$ is weakly selective and $q^+$ (both necessary for Ramsey), then for every
$Y\in\Ipos$ no map witnesses $\EDfin\leK\I\upharpoonright Y$; in particular no subideal of
a Ramsey candidate is locally above $\EDfin$ on a positive set, so no carrier program in
the style of Greb\'ik--Vidny\'anszky can produce an $\I$-positive $\EDfin$-carrier.
\end{thmC}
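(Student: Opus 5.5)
The plan is to argue by contradiction. Fix $Y\in\Ipos$ and suppose $f:Y\to\omega$ witnesses $\EDfin\leK\I\upharpoonright Y$. Recall that $\EDfin$ lives on $\Delta=\{(n,m):m\le n\}$ and is generated by the vertical columns $C_n=\{n\}\times(n+1)$ together with the graphs of functions. So we may regard $f$ as a map $Y\to\Delta$ with $f^{-1}[A]\in\I$ for every $A\in\EDfin$. In particular $f^{-1}[C_n]\in\I$ for each $n$. Write $\pi_1,\pi_2$ for the two coordinates, so that $\pi_1\circ f$ is $\I$-to-one on $Y$: each fibre is in $\I$.

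First, use $q^+$ to thin to a selector. The fibres $F_n=f^{-1}[C_n]$ form a partition of $Y$ (up to the empty ones) into pieces in $\I$. The $q^+$ property gives a positive $Z\subseteq Y$ meeting each $F_n$ in at most one point. If the $q^+$ available only handles finite pieces, I would apply it after a preliminary reduction. Weak selectivity says that every partition of a positive set into $\I$-pieces has a positive selector, possibly after restricting to a positive subset. That is exactly the right tool for pieces in $\I$, so I would invoke weak selectivity here and reserve $q^+$ for the next step.

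Second, observe that on $Z$ the map $f$ is injective and its first coordinate is injective. Hence $f[Z]$ is a partial selector of the columns $C_n$, that is, a set meeting each column in at most one point. Such a set is the graph of a partial function $g(n)=\pi_2(f(z_n))$ with $g(n)\le n$. Every graph of a function lies in $\EDfin$, so $f[Z]\in\EDfin$. Therefore $Z\subseteq f^{-1}[f[Z]]\in\I$, which contradicts $Z\in\Ipos$.

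The main obstacle is the first step: producing a single positive set that meets every fibre $F_n$ at most once. Weak selectivity is stated for partitions of positive sets into small pieces, and one must check that it applies to the possibly infinite fibres $F_n$ and yields positivity below $Y$ itself, not merely below some unrelated set. If the paper's form of weak selectivity only gives a positive set on which the partition is either trivial or selected, the trivial case is handled as follows. If a positive $Z'$ lies inside finitely many fibres, then $Z'$ is contained in a finite union of $\I$-sets, which is absurd. In the remaining case, $q^+$ handles the refinement to finite pieces. Once a selector exists, the contradiction is immediate.

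The "in particular" clauses follow directly. If $\J\subseteq\I$ and $\EDfin\leK\J\upharpoonright Y$, then the same map witnesses $\EDfin\leK\I\upharpoonright Y$, since $\J\subseteq\I$ means $f^{-1}[A]\in\J$ implies $f^{-1}[A]\in\I$. So no subideal of a Ramsey candidate is locally above $\EDfin$ on a positive set. Likewise, an $\I$-positive $\EDfin$-carrier would yield exactly such a reduction on a positive set, so the carrier program cannot produce one.
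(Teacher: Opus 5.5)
Your proof is correct, and it takes a genuinely shorter route than the paper's.

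\textbf{The paper's route.} The paper applies weak selectivity to the partition of $Y$ into the fibres of single cells $f^{-1}[\{(n,m)\}]$. This yields a positive $S\subseteq Y$ on which $f$ is injective. The column traces $S\cap f^{-1}[\mathrm{col}_n]$ then have at most $n+1$ points, and $q^+$ is invoked on this partition into finite pieces. It produces a selector $T$ whose image meets each column at most once, so $f[T]\in\EDfin$.

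\textbf{Your route.} You apply weak selectivity directly to the coarser partition into column fibres $F_n=f^{-1}[C_n]$. This is legitimate for two reasons. First, $C_n$ is finite and hence in $\EDfin$, so $F_n\in\I$. Second, weak selectivity as the paper uses it (Proposition~\ref{prop:inj}: every partition of a positive set into $\I$-small pieces has a positive selector) places no finiteness condition on the pieces. The resulting positive $Z\subseteq Y$ already satisfies $f[Z]\in\EDfin$.

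\textbf{What each buys.} Your argument never uses $q^+$, so it shows that weak selectivity alone forbids $\EDfin\leK\I\upharpoonright Y$. In fact it is the standard proof that weak selectivity excludes $\mathcal{ED}\leK\I\upharpoonright Y$. That combines with the observation that the inclusion $\Delta\hookrightarrow\omega\times\omega$ witnesses $\mathcal{ED}\leK\EDfin$. The paper's two-step version (injectivize, then apply $q^+$) mirrors Proposition~\ref{prop:inj} and the $\leKB$-characterization of $q^+$. For this particular statement, though, it buys nothing beyond your argument, and your route shows the $q^+$ hypothesis is redundant.

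\textbf{Presentation.} Your first paragraph claims that $q^+$ yields a selector of the $F_n$. That is unjustified, since the $F_n$ may be infinite. You rightly switch to weak selectivity, but after that $q^+$ is never used, even though you say you reserve it for the next step. The whole ``main obstacle'' paragraph can be deleted, for two reasons:
\begin{itemize}
\item Under either common formulation of weak selectivity, the selector exists directly and lies inside $Y$. In the formulation ``some piece is positive or there is a positive selector'', all $F_n$ are in $\I$, so the second alternative holds.
\item The fallback sentence that $q^+$ ``handles the refinement to finite pieces'' is not an argument as written.
\end{itemize}
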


The paper ends with the converse of Theorem~A, which the results above isolate as the
remaining content of Question~\ref{q:main}: \emph{must Borelness of the code force a
countable, properness-like reading of the quotient on some positive condition?} We state
it in three precise forms, each with its consequence proved. The principles \textup{(DW)}
(countable width of some new-real name) and \textup{(RC)} (continuous reading of
colourings) each imply a negative answer to Question~\ref{q:main}
(Corollaries~\ref{cor:DW} and~\ref{cor:RC}); a single Schreier-barrier colouring
(Question~\ref{q:schreier}) implies that there is no tall Borel Nash--Williams ideal
(Proposition~\ref{prop:schreier-suff}), via a saturation lemma proved here
(Lemma~\ref{lem:saturation}). Whether there is a tall Borel Nash--Williams ideal is itself
open and strictly weaker than Question~\ref{q:main}; it is not literally asked in
\cite{CDU2026} --- their Questions~4.47 and~4.48 concern the nondefinable separations ---
and by \cite[Prop.~4.34]{CDU2026} the Schreier form is the stronger target, with their
Question~4.39 as calibration. A selector obstruction (Proposition~\ref{prop:selector})
shows the required colouring cannot come from Borelness and tallness alone.

\paragraph{Organization.} Section~\ref{sec:prelim} collects the known results and the two
localization tools used throughout: notation, the Katětov characterisation, the standard
necessary conditions. \emph{From Section~\ref{sec:countable} on, every numbered statement
is due to the present paper unless an attribution appears in its header}; results quoted
from the literature carry the reference with the statement, and are never reproved. Where
a new statement repackages a known fact, or where we could not exclude folklore status,
the surrounding text says so explicitly. Section~\ref{sec:countable} proves Theorem~B, the
representation frontier, and a topological width bound. Section~\ref{sec:reduced} gives a
normal form below $\nwd(\Q)$ and isolates the limit spectrum. Section~\ref{sec:necessary}
proves Theorem~C and face~(b) of Theorem~A. Section~\ref{sec:ladder} develops the barrier
saturation machinery behind face~(d) and the Schreier question, together with the
principle that obstructions must see the ideal. Section~\ref{sec:core} proves face~(c) and
the principles (DW) and (RC). Section~\ref{sec:targets} assembles Theorem~A, records a
forcing-absoluteness transfer, summarizes the constraints established, and states the
three remaining questions. Appendix~\ref{app:completion} explains, in Boolean language,
why the real added by the quotient of a candidate lies in the completion
$\mathrm{RO}(\mathbb B)\setminus\mathbb B$, and why low complexity of the added real
cannot open (DW).

\section{Preliminaries and known constraints}\label{sec:prelim}

We use $\nwd$ (nowhere dense subsets of $\Q$), $\conv$ (subsets of a convergent-sequence
space with countable-compact closure), $\mathcal{ED}$ on $\omega\times\omega$ (sets whose
columns are eventually of bounded size), $\EDfin=\mathcal{ED}\upharpoonright\Delta$ on
$\Delta=\{(n,m):m\le n\}$, and $\R$. Recall $\R\leK\EDfin$, $\R\leK\mathcal{ED}$, and
$\R\leK\conv\leK\mathrm{Fin}\times\mathrm{Fin}$.

An ideal $\J$ has a \emph{topological representation} \cite{KwelaSabok2015} (a notion rooted
in Solecki's analysis of covering by closed sets \cite{Solecki1994}) if
$A\in\J\iff\overline{e[A]}\in\K$ for a bijection $e:\omega\to D$, $D$ dense in a compact
metric $K$, and $\K$ a $\sigma$-ideal of compact sets. A \emph{barrier} / \emph{front} on
$\omega$ and the associated $B$-Ramsey, Nash--Williams and Galvin properties are as in
\cite{CDU2026}. The Schreier barrier is $\mathscr S=\{s:|s|=\min(s)+1\}$. We write
$\QI=\mathcal P(\omega)/\I$ for the quotient forcing, whose conditions are the
$\I$-positive sets ordered by $A\le B\iff A\setminus B\in\I$, with
$A\perp B\iff A\cap B\in\I$; for Borel $\I$ this is a Borel (Suslin) forcing.

\begin{theorem}[\cite{HMTU2017}]\label{thm:kat}
$\I$ is Ramsey iff $\R\nleK\I\upharpoonright X$ for every $X\in\Ipos$. Consequently the
Ramsey property is inherited by positive restrictions and (\cite[Prop.~3.4]{PelayoGomez},
\cite[Thm~3.13]{CDU2026}) is equivalent to $\R^n_k\nleK\I\upharpoonright X$ for all
$X\in\Ipos$ and all $n,k\ge2$.
\end{theorem}

\begin{theorem}[standard necessary conditions; assembled from
\cite{HMTU2017,HrusakKatetov2017}]\label{thm:standard}
A tall Borel Ramsey ideal $\I$ satisfies, on \emph{every} positive restriction: weak
selectivity and $q^+$; failure of $p^+$; $p^+_{\mathrm{tower}}$;
$(\omega,2)$-distributive quotient; and decomposability (hence $\I$ is hereditarily
decomposable). Moreover $\I\leK\nwd$.
\end{theorem}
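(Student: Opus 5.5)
The statement is a collection of known facts, so the plan is to assemble it rather than prove anything new. Each item is either a consequence of the Katětov characterisation (Theorem~\ref{thm:kat}) or a result of \cite{HMTU2017,HrusakKatetov2017} that passes to positive restrictions. First I reduce to the restricted ideal. By Theorem~\ref{thm:kat}, if $\I$ is Ramsey then so is $\I\upharpoonright X$ for every $X\in\Ipos$. If $\I$ is Borel, so is $\I\upharpoonright X$, since it is the preimage of $\I$ under the continuous map $A\mapsto A\cap X$. Tallness also passes down: an infinite $A$ with $A\cap X$ infinite has an infinite subset of $A\cap X$ in $\I$, while if $A\cap X$ is finite then $A$ itself is already in $\I\upharpoonright X$. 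Hence it suffices to prove each listed property for $\I$ itself, and then apply this to $\I\upharpoonright X$.

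Next I derive each property from the Katětov characterisation.

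\textbf{Weak selectivity.} Suppose a partition of a positive set into $\I$-null pieces has no positive selector. The colouring that assigns $0$ to pairs in the same piece and $1$ otherwise then has no positive homogeneous set. A $0$-homogeneous set lies inside a single piece, so it is null. A $1$-homogeneous set is a selector, so it is null by assumption.

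\textbf{$q^+$.} This follows in the same way, using partitions into finite pieces.

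\textbf{Failure of $p^+$.} This comes from \cite{HMTU2017}: a Borel tall $p^+$ ideal would be locally $F_\sigma$-like. I would quote it rather than reprove it.

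\textbf{$p^+_{\mathrm{tower}}$ and $(\omega,2)$-distributivity.} These are the tower and forcing consequences of weak selectivity combined with $q^+$, as recorded in \cite{HrusakKatetov2017}.

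\textbf{Decomposability.} This is taken from \cite{HMTU2017} for tall Borel ideals that are not $F_\sigma$. Since every positive restriction is again tall, Borel and Ramsey, it holds hereditarily.

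Finally, for $\I\leK\nwd$ I would use the Katětov dichotomy for Borel ideals: either $\I\leK\nwd$ or some restriction of $\I$ lies Katětov above $\conv$ or $\mathrm{Fin}\times\mathrm{Fin}$. Because $\R\leK\conv\leK\mathrm{Fin}\times\mathrm{Fin}$, the second alternative would give $\R\leK\I\upharpoonright X$ for some positive $X$, contradicting Theorem~\ref{thm:kat}.

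The main obstacle is bookkeeping rather than mathematics. Each imported result is stated in the literature either for $\I$ or for its restrictions, possibly under additional hypotheses such as Borel versus analytic, or tallness. The work is to check that those hypotheses are preserved by $\I\upharpoonright X$, which is what the first step provides.
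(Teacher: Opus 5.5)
Your localization step and the colouring proofs of weak selectivity and $q^+$ are fine. However, two of the facts you quote are false as stated.

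\textbf{The dichotomy behind $\I\leK\nwd$.} This is the one item the paper actually proves. Hru\v{s}\'ak's Category Dichotomy says that a Borel $\I$ satisfies either $\I\leK\nwd$ or $\mathcal{ED}\leK\I\upharpoonright X$ for some $X\in\Ipos$. Your version puts $\conv$ or $\mathrm{Fin}\times\mathrm{Fin}$ in the second alternative, which amounts to $\conv$ alone since $\conv\leK\mathrm{Fin}\times\mathrm{Fin}$. That version already fails for $\I=\mathcal{ED}$.
\begin{itemize}
\item $\mathcal{ED}\nleK\nwd$. For any $f:\Q\to\omega\times\omega$ that is a candidate witness, the preimage of finitely many columns must be nowhere dense. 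So you can choose $q_j$ in the $j$-th basic open set with $f(q_j)$ in a column not used before. Then $\{f(q_j):j\in\omega\}\in\mathcal{ED}$, but its preimage is dense.
\item No restriction of $\mathcal{ED}$ is Katětov above $\conv$. Write $\mathcal{ED}=\mathrm{Fin}(\varphi)$, which is $F_\sigma$. Given a positive $X$ and $g:X\to[0,1]$, choose nested halving intervals $J_n$ with $\varphi(X\cap g^{-1}[J_n])=\infty$. Then choose successive finite $F_n\subseteq X\cap g^{-1}[J_n]$ with $\varphi(F_n)\ge n$. The set $Y=\bigcup_nF_n$ is positive and $g[Y]$ converges.
\end{itemize}
The repair is the paper's argument: use the correct alternative together with $\R\leK\mathcal{ED}$ and Theorem~\ref{thm:kat}.

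\textbf{$(\omega,2)$-distributivity.} You claim it follows from weak selectivity and $q^+$; it does not. The ideal $\nwd(\Q)$ is weakly selective (the paper uses this in Corollary~\ref{cor:normal}). Yet suppose $Y$ is dense in an open interval $U$ and $J\subseteq U$ is a rational interval with $U\setminus\overline J\neq\emptyset$. Then $Y\cap J$ and $Y\setminus J$ are both somewhere dense. So no positive set decides all rational intervals modulo $\nwd$.

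This item needs the Ramsey property itself; it is \cite[Thm~3.13]{HMTU2017}. Directly: given $X\in\Ipos$ and $(C_n)_n$, colour $\{x<y\}\subseteq X$ by whether the itinerary $\iota(x)=(\mathbf{1}_{C_n}(x))_n$ is lexicographically below $\iota(y)$. Along a positive homogeneous set the itineraries are (weakly) lexicographically monotone. They therefore converge, by the argument of Lemma~\ref{lem:toprep-nonramsey}, and so that set decides every $C_n$ modulo finite.

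\textbf{Failure of $p^+$ and decomposability.} Your justifications here (``locally $F_\sigma$-like'', and a property of all tall Borel non-$F_\sigma$ ideals) are not the facts being assembled. Both follow from the weak selectivity you proved.
\begin{itemize}
\item $q^+$ together with $p^+$ makes $\Ipos$ a selective coideal. Mathias's theorem forbids this for a tall analytic ideal.
\item Suppose $X_0\supseteq X_1\supseteq\cdots$ inside $X$ has no positive pseudo-intersection. By the same selector argument, infinitely many differences $X_n\setminus X_{n+1}$ are positive, and $\bigcap_nX_n\in\I$. Group the differences into positive blocks, putting $\bigcap_nX_n$ and $X\setminus X_0$ into the first block. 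This is a decomposition of $X$. Indeed, a positive set meeting every piece in an $\I$-small set would have a positive selector, and that selector would be a pseudo-intersection of the $X_n$.
\end{itemize}
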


Every ingredient of the Kat\v{e}tov bound $\I\leK\nwd$ is likewise known; since we could
not locate the bound stated explicitly in the literature, we include the two-line
assembly, claiming no novelty for it.

\begin{proof}[Proof of the bound $\I\leK\nwd$]
By Hru\v{s}\'ak's Category Dichotomy \cite{HrusakKatetov2017}, for every Borel ideal $\I$,
either $\I\leK\nwd$, or there is $X\in\Ipos$ with $\mathcal{ED}\leK\I\upharpoonright X$.
Since $\R\leK\mathcal{ED}$, the second alternative gives $\R\leK\I\upharpoonright X$,
which contradicts the Ramsey property by Theorem~\ref{thm:kat}.
\end{proof}

\begin{remark}[a game-theoretic route to the same bound]\label{rem:G3}
The bound is not an artifact of the dichotomy. Let $G_3(\I)$ be the game in which, at
stage $n$, Player~I plays a set $A_n\in\I$ and Player~II answers with a point
$x_n\in\omega\setminus A_n$, Player~I winning iff $\{x_n:n\in\omega\}\in\I$. If Player~II
has a winning strategy in $G_3(\I)$, then $\I\leK\nwd$, by the game characterization of
Hru\v{s}\'ak--Meza-Alc\'antara (see \cite{HrusakSurvey}). An elementary verification,
using only the $(\omega,2)$-distributivity and $q^+$ items of Theorem~\ref{thm:standard},
shows that for a Ramsey $\I$ Player~I has no winning strategy in
$G_3(\I\upharpoonright X)$ for any $X\in\Ipos$; for Borel $\I$ the game is determined
\cite{Martin1975}, so Player~II wins. Thus the bound is already forced by the
game-theoretic consequences of the Ramsey property alone. We omit the verification, as
the dichotomy already gives the bound.
\end{remark}

Two localization tools close this section. The first is likely folklore in the
Katětov--Blass order, but we could not locate it in the form we need; the second is
folklore, and we will include its three-line proof for completeness.

\begin{proposition}[injectivization]\label{prop:inj}
Let $\I$ be an ideal on $E$, $\J$ a weakly selective ideal on $D$, and $f:D\to E$ a witness
of $\I\leK\J$. Then there is $Y\in\J^+$ on which $f$ is injective; with $T=f[Y]\in\Ipos$, the
transported copy of $\I\upharpoonright T$ is contained in $\J\upharpoonright Y$.
\end{proposition}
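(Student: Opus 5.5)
The plan is to apply weak selectivity of $\J$ to the partition of $D$ into the fibres of $f$. Recall that $\J$ is weakly selective if, whenever a $\J$-positive set is partitioned into pieces, either some piece is $\J$-positive or there is a $\J$-positive selector (a set meeting each piece in at most one point). Consider the partition $D=\bigcup_{e\in E} f^{-1}[\{e\}]$, discarding empty fibres. Here $D\in\J^+$ because $\J$ is proper. Each singleton $\{e\}$ is finite, hence in $\I$, and $f$ witnesses $\I\leK\J$, so every fibre $f^{-1}[\{e\}]$ lies in $\J$. The first alternative of weak selectivity therefore fails. We obtain $Y\in\J^+$ meeting each fibre in at most one point, which says exactly that $f\upharpoonright Y$ is injective.

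Next, I check that $T=f[Y]$ is $\I$-positive. If $T\in\I$, then $f^{-1}[T]\in\J$ because $f$ is a Katětov witness. But $Y\subseteq f^{-1}[T]$, so $Y\in\J$, which contradicts the choice of $Y$. Hence $T\in\Ipos$.

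For the transport, let $g=f\upharpoonright Y:Y\to T$, which is a bijection. Take $A\subseteq E$ with $A\in\I\upharpoonright T$, i.e.\ $A\cap T\in\I$. Then $f^{-1}[A\cap T]\in\J$, and
\[
g^{-1}[A\cap T]=f^{-1}[A\cap T]\cap Y\in\J .
\]
This set is exactly the preimage under $g$ of the trace of $A$ on $T$, so it lies in $\J\upharpoonright Y$. Identifying $T$ with $Y$ via $g$, the transported copy $\{g^{-1}[A\cap T]:A\in\I\upharpoonright T\}$ of $\I\upharpoonright T$ is therefore contained in $\J\upharpoonright Y$.

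None of these steps is a real obstacle. The only point needing care is the precise form of weak selectivity used: it must allow a partition of a $\J$-positive set into arbitrarily many pieces, all lying in $\J$. The finiteness of singletons, together with $\I$ containing $\mathrm{Fin}$, is what guarantees that the fibres of $f$ fall into this case.
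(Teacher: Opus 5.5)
Your proposal is correct and follows essentially the same argument as the paper's proof. In both, the fibres of $f$ lie in $\J$ because singletons lie in $\I$, and weak selectivity yields a positive selector $Y$. Positivity of $T$ follows from $Y\subseteq f^{-1}[T]$, and the transport is the identity $(f\upharpoonright Y)^{-1}[A]=f^{-1}[A\cap T]\cap Y\in\J$.
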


\begin{proof}
Each nonempty fiber $f^{-1}(\{e\})$ lies in $\J$, because $\{e\}\in\I$ and $f$ witnesses
$\I\leK\J$. Apply weak selectivity of $\J$ (every partition of a $\J$-positive set into
$\J$-small pieces has a $\J$-positive selector) to the partition of $D$ into the nonempty
fibers of $f$: there is $Y\in\J^+$ meeting each fiber in at most one point, so
$f\upharpoonright Y$ is injective. If $T=f[Y]$ were in $\I$, then
$Y\subseteq f^{-1}[T]\in\J$, a contradiction; hence $T\in\Ipos$. Finally, for
$A\in\I\upharpoonright T$ we have
$(f\upharpoonright Y)^{-1}[A]=f^{-1}[A\cap T]\cap Y\in\J$, so the copy of
$\I\upharpoonright T$ transported to $Y$ is contained in $\J\upharpoonright Y$.
\end{proof}

\begin{lemma}[$\sigma$-compact splitter; folklore]\label{lem:splitter}
Every $\sigma$-compact $\K\subseteq[\omega]^\omega$ has a common splitter.
\end{lemma}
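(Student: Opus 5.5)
We must show: for a $\sigma$-compact $\K\subseteq[\omega]^\omega$ there is a set $S\subseteq\omega$ splitting every member of $\K$, i.e.\ $S\cap A$ and $A\setminus S$ are both infinite for every $A\in\K$.

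First we reduce to a single compact set. Write $\K=\bigcup_n\K_n$ with each $\K_n$ compact and contained in $[\omega]^\omega$. The key observation is that for compact $\K_n\subseteq[\omega]^\omega$, for each $k$ and each finite $F\subseteq\omega$, the function $A\mapsto\min(A\setminus F)$ is finite-valued and continuous on $\K_n$. Indeed, $\{A:A\cap[m,\infty)=\emptyset\}$ is closed, and these sets are decreasing with empty intersection on $\K_n$, so by compactness one of them misses $\K_n$ entirely. Hence for every $m$ there is $g_n(m)>m$ such that every $A\in\K_n$ meets the interval $[m,g_n(m))$.

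Next we diagonalize across the countably many pieces. Define $h(m)=\max_{i\le m}g_i(m)$. Then for each fixed $n$ and every $m\ge n$, each $A\in\K_n$ meets $[m,h(m))$. Build an interval partition by setting $m_0=0$ and $m_{j+1}=h(m_j)$. Every $A\in\K_n$ meets $I_j=[m_j,m_{j+1})$ for all $j$ with $m_j\ge n$, so every $A\in\K$ meets all but finitely many intervals $I_j$.

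Finally, put $S=\bigcup_j I_{2j}$. Any $A\in\K$ meets cofinitely many even intervals, so $A\cap S$ is infinite, and it meets cofinitely many odd intervals, so $A\setminus S$ is infinite. Thus $S$ is a common splitter.

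The only step requiring care is the compactness argument producing $g_n$. It is needed because $\K_n$ consists of infinite sets only, which is exactly what makes the decreasing family of closed sets have empty intersection; the rest of the argument is routine bookkeeping.
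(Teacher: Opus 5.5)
Your proof is correct and is essentially the paper's argument. The paper likewise uses that the continuous integer-valued map $B\mapsto\min(B\setminus[0,k])$ is bounded on compact pieces. It recursively picks $k_0<k_1<\cdots$ with $k_{j+1}$ dominating this map on $K_0\cup\dots\cup K_j$, and takes the union of the even-indexed intervals. The only difference is that the paper indexes the diagonalization by the step $j$ rather than by the value $m$, which is immaterial.

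One small slip: the closed sets in your ``Indeed'' sentence should be $\{A\in\K_n:A\cap[m,N)=\emptyset\}$ for $N>m$, which decrease in $N$ with empty intersection. The sets $\{A:A\cap[m,\infty)=\emptyset\}$ you wrote consist of finite sets, so they already miss $\K_n$. In any case, your preceding observation (a continuous $\omega$-valued map is bounded on a compact set) suffices on its own.
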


\begin{proof}
Write $\K=\bigcup_n K_n$ with each $K_n$ compact in $[\omega]^\omega$. For $k\in\omega$
the function $q_k(B)=\min(B\setminus[0,k])$ is continuous and integer-valued on
$[\omega]^\omega$, hence bounded on each $K_n$. Choose $k_0<k_1<\cdots$ recursively so
that
\[
 k_{j+1}\ \ge\ \max\{q_{k_j}(B):B\in K_0\cup\dots\cup K_j\}.
\]
Then every $B\in K_n$ meets every interval $(k_j,k_{j+1}]$ with $j\ge n$, so the union of
the even-indexed intervals splits every member of $\K$.
\end{proof}

\section{Countable intersections and the representation
frontier}\label{sec:countable}

The first group of results shows that the Ramsey property never arises as a countable
intersection of represented pieces. The two theorems are independent; neither subsumes
the other.

\begin{theorem}[submeasure form]\label{thm:int-sub}
No countable intersection of tall analytic $P$-ideals has the Ramsey property. In fact,
for each such intersection $\I$ and each $X\in\Ipos$ there is a positive $Y\subseteq X$
with $\EDfin\leK\I\upharpoonright Y$.
\end{theorem}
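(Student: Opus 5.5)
Write $\I=\bigcap_{k}\J_k$ with each $\J_k$ a tall analytic $P$-ideal. By Solecki's theorem, $\J_k=\mathrm{Exh}(\varphi_k)$ for a lower semicontinuous submeasure $\varphi_k$. Fix $X\in\Ipos$. Then there is $k_0$ with $X\in\J_{k_0}^+$, so $\|X\|_{\varphi_{k_0}}:=\lim_n\varphi_{k_0}(X\setminus n)>0$. The plan has three steps. First, find a positive $Y\subseteq X$ on which a single submeasure controls $\I$-positivity. Second, build the usual block structure inside $Y$, as in the argument of \cite{HMTU2017} that no tall analytic $P$-ideal is Ramsey. Third, read off the Katětov map onto $\Delta$.

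For the first step, I would not try to shrink to a set where $\I$ and $\J_{k_0}$ agree; that may be impossible. It suffices that $\I\upharpoonright Y\subseteq\J_{k_0}\upharpoonright Y$, which holds automatically because $\I\subseteq\J_{k_0}$. Set $Y=X$ and $\varphi=\varphi_{k_0}$, with $\varepsilon=\|X\|_\varphi>0$.

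The block construction goes as follows. By tallness of $\J_{k_0}$, every infinite set contains an infinite set $Z$ with $\|Z\|_\varphi=0$. Hence every finite set is a union of small pieces, in the sense below. Using lower semicontinuity, choose consecutive finite intervals $I_0<I_1<\cdots$ such that $\varphi(X\cap I_n)>\varepsilon/2$. Then, using tallness together with a compactness/diagonal argument, partition each $X\cap I_n$ into pieces $P_{n,0},\dots,P_{n,\ell_n}$ with $\varphi(P_{n,j})<2^{-n}$ and $\ell_n\to\infty$.

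The justification is that if some block could not be split finely, one gets infinitely many points of $X$ with $\varphi$-mass bounded below. Refined over infinitely many blocks, this gives an infinite set with no $\varphi$-null infinite subset, which contradicts tallness. This is the standard use of tallness in the $P$-ideal case.

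Now define $f:\bigcup_n\bigcup_j P_{n,j}\to\Delta$ by sending $P_{n,j}$ to a point of row $n$. To make column bounds work, reindex rows so that row $n$ has $\ell_n+1$ points; $\Delta$ can be replaced by an isomorphic copy with prescribed growing row lengths, since $\EDfin$ is invariant under such thinning up to $\leK$. Take $A\in\EDfin$. Apart from finitely many rows, $A$ meets each row in at most $b$ points, so $f^{-1}[A]$ meets $I_n$ in at most $b$ pieces. Hence its $\varphi$-mass in $I_n$ is at most $b2^{-n}\to0$, so $f^{-1}[A]\in\mathrm{Exh}(\varphi)=\J_{k_0}$.

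The main obstacle is that we need $f^{-1}[A]\in\I$, not merely $\in\J_{k_0}$. The fix is to choose the pieces so that $\varphi_k(P_{n,j})<2^{-n}$ for all $k\le n$ simultaneously. Then $f^{-1}[A]\in\mathrm{Exh}(\varphi_k)$ for every $k$, hence $f^{-1}[A]\in\I$. This diagonal refinement is achievable because each $\J_k$ is tall: within block $n$, intersect the finitely many partitions given by $\varphi_0,\dots,\varphi_n$. The refined pieces are still small for each of these submeasures, and their number still tends to infinity.

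Finally, let $Y=\bigcup_n(X\cap I_n)$. This set is in $\J_{k_0}^+\subseteq\Ipos$, since its $\varphi_{k_0}$-mass exceeds $\varepsilon/2$ on infinitely many blocks. The map $f$ then witnesses $\EDfin\leK\I\upharpoonright Y$. Non-Ramseyness follows from $\R\leK\EDfin$ and Theorem~\ref{thm:kat}.
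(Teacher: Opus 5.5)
Your overall route is the paper's: Solecki representation, a fixed $k_0$ with $X\notin\J_{k_0}$, blocks of $\varphi_{k_0}$-mass bounded below, blocks sent to distinct columns of $\Delta$, and summable bounds for every $\varphi_k$. However, the block-splitting step fails in the order you perform it. A block $X\cap I_n$ admits a partition into pieces of $\varphi_k$-mass $<2^{-n}$ if and only if every $x$ in the block has $\varphi_k(\{x\})<2^{-n}$, because any piece containing $x$ has mass at least $\varphi_k(\{x\})$. So once the intervals have been fixed using only $\varphi(X\cap I_n)>\varepsilon/2$, no compactness or diagonal argument can produce the partition.

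This does happen. Take the summable ideal with weights $1/\log_2(x+2)$ and $X=\omega$. Consecutive minimal intervals of mass $>1/2$ have left endpoints of order $n\log n$, where singletons weigh about $1/\log n\gg2^{-n}$. Your justification does not repair this. If block $n$ contains a point of mass $\ge2^{-n}$ for infinitely many $n$, those masses still tend to $0$, and the points may well form a set in $\J_{k_0}$. Since the thresholds shrink with $n$, you never obtain infinitely many points of mass bounded below, so tallness is not contradicted.

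The missing idea is the paper's finiteness lemma: for each $k$ and $\delta>0$ the set $H_{k,\delta}=\{x:\varphi_k(\{x\})\ge\delta\}$ is finite. Indeed, every infinite $Z\subseteq H_{k,\delta}$ has all tails of $\varphi_k$-mass $\ge\delta$, so an infinite $H_{k,\delta}$ would have no infinite subset in $\J_k$. The blocks must then be chosen \emph{recursively}, the $n$-th one above the finite set $\bigcup_{k\le n}H_{k,2^{-n}}$. Lower semicontinuity still gives $\varphi_{k_0}$-mass $>\varepsilon/2$ there, since every tail of $X$ has mass $\ge\varepsilon$.

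After that the pieces can simply be singletons. The requirement $\ell_n\to\infty$ and the re-indexed copy of $\Delta$ are unnecessary: send block $n$ injectively into a column $r_n$ of $\Delta$ with $r_n+1\ge|X\cap I_n|$, using distinct $r_n$.

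Finally, ``block mass $\le b2^{-n}\to0$'' does not by itself give exhaustiveness. What you need is the tail estimate $\varphi_k\bigl(f^{-1}[A]\setminus\min I_N\bigr)\le\sum_{n\ge N}b\,2^{-n}$, for $N$ beyond $k$ and beyond the exceptional columns. This follows from $\sigma$-subadditivity of lower semicontinuous submeasures, and it works here only because your bounds are summable.
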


\begin{proof}
Write $\I=\bigcap_{m\in\omega}\J_m$ with each $\J_m$ tall, and fix lower semicontinuous
submeasures $\varphi_m$ with $\J_m=\mathrm{Exh}(\varphi_m)$, by Solecki's theorem. Let
$X\in\Ipos$ and fix $m_*$ with $X\notin\J_{m_*}$; the tail masses
$\varphi_{m_*}(X\setminus N)$ decrease to a positive limit, so we may fix $\varepsilon>0$
with $\varphi_{m_*}(X\setminus N)>\varepsilon$ for every $N$.

For $m\in\omega$ and $\delta>0$ the set
$H_{m,\delta}=\{n\in\omega:\varphi_m(\{n\})\ge\delta\}$ is finite: if it were infinite,
tallness of $\J_m$ would give an infinite $Z\subseteq H_{m,\delta}$ with $Z\in\J_m$; but
every tail $Z\setminus n$ contains a point of mass $\ge\delta$, so
$\varphi_m(Z\setminus n)\ge\delta$ for all $n$ and $Z\notin\mathrm{Exh}(\varphi_m)$, a
contradiction.

We recursively choose finite sets $b_j\subseteq X$ so that:
\begin{itemize}
\item $\max b_j<\min b_{j+1}$;
\item $\varphi_{m_*}(b_j)>\varepsilon$;
\item for every $m\le j$ and every $n\in b_j$,
$\varphi_m(\{n\})<2^{-m-1}/(j+1)^2$.
\end{itemize}
Assume $b_i$ has been chosen for $i<j$. The set
$F_j=\bigcup_{m\le j}H_{m,\,2^{-m-1}/(j+1)^2}$ is finite. Choose $N$ above $F_j$ and above
all previous blocks; since $\varphi_{m_*}(X\setminus N)>\varepsilon$, lower
semicontinuity yields a finite $b_j\subseteq X\setminus N$ with
$\varphi_{m_*}(b_j)>\varepsilon$, and $b_j\cap F_j=\emptyset$ gives the singleton bounds.

Set $Y=\bigcup_j b_j\subseteq X$. For any block $b_j$ beyond $n$ we have
$\varphi_{m_*}(Y\setminus n)\ge\varphi_{m_*}(b_j)>\varepsilon$, so
$Y\notin\mathrm{Exh}(\varphi_{m_*})=\J_{m_*}$, hence $Y\in\Ipos$.

Choose pairwise distinct integers $r_j$ with $r_j+1\ge|b_j|$ and injections
$f_j:b_j\hookrightarrow\{(r_j,t):t\le r_j\}$, the $r_j$-th column of $\Delta$. Let
$f=\bigcup_jf_j:Y\to\Delta$, extended arbitrarily to $\omega\setminus Y$ (the extension
is irrelevant, as membership in $\I\upharpoonright Y$ only depends on the trace on $Y$).

We claim $f$ witnesses $\EDfin\leK\I\upharpoonright Y$. Let $A\in\EDfin$ and choose $k$
and $n_0$ such that $|A\cap\mathrm{col}_n|\le k$ for all $n\ge n_0$, where
$\mathrm{col}_n=\{(n,m):m\le n\}$. The finitely many
exceptional columns pull back into finitely many blocks, hence into a finite subset of
$Y$, which lies in every $\J_m$; so we may assume all columns of $A$ have size $\le k$.
Then $|f^{-1}[A]\cap b_j|\le k$ for every $j$, and for $m\le j$,
\[
 \varphi_m\bigl(f^{-1}[A]\cap b_j\bigr)
 \le\sum_{n\in f^{-1}[A]\cap b_j}\varphi_m(\{n\})
 < k\cdot\frac{2^{-m-1}}{(j+1)^2}.
\]
Hence, for every $N\ge m$, $\sigma$-subadditivity gives
\[
 \varphi_m\Bigl(f^{-1}[A]\cap Y\setminus\bigcup_{j<N}b_j\Bigr)
 \le\sum_{j\ge N}k\cdot\frac{2^{-m-1}}{(j+1)^2},
\]
which tends to $0$ as $N\to\infty$; that is,
$f^{-1}[A]\cap Y\in\mathrm{Exh}(\varphi_m)$. As $m$ was arbitrary,
$f^{-1}[A]\in\I\upharpoonright Y$.

Finally, $\R\leK\EDfin$ (\S\ref{sec:prelim}), so Theorem~\ref{thm:kat} applied to the
positive $Y$ shows $\I$ is not Ramsey.
\end{proof}

\begin{remark}[the $F_\sigma$ analogue]\label{rem:fsigma-open}
A finite intersection of tall $F_\sigma$ ideals is a tall $F_\sigma$ ideal, so it is not
Ramsey by Hru\v{s}\'ak's theorem quoted in the introduction. Whether a countably
infinite intersection of tall $F_\sigma$ ideals can be Ramsey we do not know. The proof
above breaks for $\mathrm{Fin}(\varphi_m)$-presentations at the finiteness of
$H_{m,\delta}$, and no normalization restores it: $\EDfin$ is a tall $F_\sigma$ ideal
admitting no lower semicontinuous presentation with vanishing singleton masses (if
$\varphi(\{x\})\to0$, then for each $j$ one may pick $j$ points of mass $<2^{-j}/j$
inside a single column beyond all previously used ones --- only finitely many points have
mass $\ge2^{-j}/j$, and late columns are long; the union $S$ of these chunks has
$\varphi(S)\le\sum_j2^{-j}<\infty$ by $\sigma$-subadditivity, hence $S$ would lie in
$\mathrm{Fin}(\varphi)=\EDfin$, yet its columns are unbounded).
\end{remark}

\begin{lemma}\label{lem:toprep-nonramsey}
Every ideal with a topological representation fails the ordinary Ramsey property.
\end{lemma}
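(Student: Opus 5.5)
The plan is to exhibit, for a representation $A\in\J\iff\overline{e[A]}\in\K$ with $e:\omega\to D$, $D$ dense in the compact metric space $K$, a colouring $c:[\omega]^2\to2$ all of whose homogeneous sets lie in $\J$. We first record two facts. Since $\J$ is proper, $K=\overline{D}\notin\K$. We also assume the Kwela--Sabok convention that $\K$ contains all singletons of $K$. Then, $\K$ being a $\sigma$-ideal of compact sets, every countable compact subset of $K$ lies in $\K$. This convention is needed: if some $\{x_0\}\notin\K$, the ideal of sets not accumulating at $x_0$ is represented, and it \emph{is} ordinarily Ramsey by the classical Ramsey theorem applied to a sequence converging to $x_0$. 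So I would check that the definition in \cite{KwelaSabok2015} includes this, or that it follows from the representation producing a tall ideal. Given the convention, it suffices to find $c$ such that every homogeneous $Y$ has countable closure $\overline{e[Y]}$.

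For the colouring I would use a Sierpi\'nski-type construction. Since $K$ is compact metric, fix a continuous surjection $\pi:2^\omega\to K$ and a section $s:D\to2^\omega$, so that $\pi\circ s=\mathrm{id}_D$. Order $\omega$ by the lexicographic order $\prec$ of $s\circ e$, and for $m<n$ set $c(\{m,n\})=0$ iff $m\prec n$. A homogeneous set $Y$ is then, in its natural enumeration, strictly $\prec$-increasing or strictly $\prec$-decreasing. Hence $s[e[Y]]$ is a lexicographically monotone sequence in $2^\omega$, whose closure is the sequence together with its unique limit point. This closure is countable, and its image under $\pi$ is a countable compact set containing $\overline{e[Y]}$. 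The key steps, in order, are: verify the closure of a monotone sequence in $(2^\omega,\mathrm{lex})$ is countable (a monotone sequence converges, since the lexicographic order topology agrees with the product topology on $2^\omega$); push it forward by $\pi$; and conclude $\overline{e[Y]}\in\K$, i.e.\ $Y\in\J$.

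I expect the main obstacle to be the first point of the previous paragraph: justifying that singletons belong to $\K$, or otherwise reducing to that case. If the convention is not part of the definition, I would argue differently. Let $P=\{x\in K:\{x\}\notin\K\}$. If some $x\in P$ is a limit of $D$, then every neighbourhood trace of $x$ is $\J$-positive. I would then try to show directly that, below such a set, the homogeneous sets produced above still avoid $\J$ only when the monotone limit is a point of $P$. In that case one refines the colouring by a third criterion, such as the parity of the first splitting level in a countable base, to kill convergence to points of $P$. Failing that, I would fall back on producing a Kat\v{e}tov reduction $\R\leK\J$ from the same lexicographic data. Either way, once the closure of each homogeneous set is shown to be countable and to avoid $P$, the conclusion is immediate.
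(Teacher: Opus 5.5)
Your main argument is the paper's proof. You colour a pair by comparing the coded points lexicographically. A homogeneous set then enumerates a lexicographically monotone, hence convergent, sequence in $2^\omega$, so the closure of its image is countable compact and lies in $\K$.

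Two differences are worth keeping.
\begin{itemize}
\item The paper quotes Kwela--Sabok to take the representing space to be $2^\omega$. Your surjection $\pi:2^\omega\to K$ with a section over $D$ only pulls back an order for the colouring and never changes the representation. So your argument works for an arbitrary compact metric $K$ without that citation.
\item You are right that the limit point may lie outside $D$, so one needs $\{x\}\in\K$ for every $x\in K$, not only for $x\in D$. The paper's step from ``$\K$ contains every singleton of $D$'' to ``every countable compact set'' silently uses exactly this. Your example shows it cannot be dropped: for instance $\mathrm{Fin}$, represented on $\omega+1$ by the compact sets omitting the point at infinity, is ordinarily Ramsey.
\end{itemize}

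Your last paragraph, however, is a dead end and should be deleted. Your two observations together prove that a represented ideal fails the ordinary Ramsey property \emph{exactly} when every singleton of $K$ lies in $\K$. Suppose $\{x_0\}\notin\K$. Then $x_0\notin D$, since otherwise $x_0=e(i)$ and $\{i\}\notin\J$. Ramsey's theorem inside a set whose image converges to $x_0$ then produces positive homogeneous sets. So in that case no refined colouring and no reduction $\R\leK\J$ can exist.

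What remains is the check you mention, and it closes cleanly in one of two ways.
\begin{itemize}
\item The convention may be part of the definition you are using; this is what the paper's proof tacitly assumes.
\item Otherwise it follows from tallness of $\J$. Tallness holds for every piece of a representation of a tall ideal, since an ideal containing a tall ideal is tall, so it covers all the applications to tall Borel $\I$.
\end{itemize}
For the tallness argument: if $\{x\}\notin\K$, then as above $x\in\overline D\setminus D$, so there are distinct $a_n$ with $e(a_n)\to x$. Every infinite $B\subseteq\{a_n:n\in\omega\}$ has $\{x\}\subseteq\overline{e[B]}$, hence $\overline{e[B]}\notin\K$ by heredity. Thus $\{a_n:n\in\omega\}$ has no infinite subset in $\J$, and $\J$ is not tall.
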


\begin{proof}
Let $\J$ be topologically represented. By \cite{KwelaSabok2015} the compact space of a
representation may be taken to be $2^\omega$; fix such a representation, with bijection
$e:\omega\to D\subseteq2^\omega$ and $\sigma$-ideal of compact sets $\K$. Let $<_{\mathrm{lex}}$ be the lexicographic
order on $2^\omega$ and colour $\{i,j\}$ with $i<j$ according to whether
$e(i)<_{\mathrm{lex}}e(j)$. An infinite homogeneous $H$ enumerates, in its natural order,
a lexicographically monotone sequence $(x_p)_p$ in $2^\omega$; such a sequence converges
in $2^\omega$: $x_p(0)$ is monotone in $p$, hence eventually constant, and once the first
$i$ coordinates have stabilized the sequence is $<_{\mathrm{lex}}$-monotone in the
remaining coordinates, so $x_p(i)$ also stabilizes. Thus $\overline{e[H]}$ is a countable
compact set. Since $\J$ contains the singletons, $\K$ contains every singleton of $D$,
and being a $\sigma$-ideal of compact sets it contains every countable compact set;
hence $\overline{e[H]}\in\K$, i.e.\ $H\in\J$. So no colouring-homogeneous set is
$\J$-positive.
\end{proof}

\begin{theorem}[non-Ramsey($\omega$) form]\label{thm:int-nonramsey}
If $\I=\bigcap_m\J_m$ and each $\J_m$ fails ordinary Ramsey$(\omega)$, then $\I$ is not
Ramsey. In particular no countable intersection of ideals with topological representations is
Ramsey.
\end{theorem}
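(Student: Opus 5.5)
We prove that $\omega\to(\Ipos)^2_2$ already fails for $\I$. This suffices: the Ramsey property at $X=\omega$ is exactly the ordinary Ramsey property. For each $m$, fix a witness $c_m:[\omega]^2\to2$ to the failure of ordinary Ramsey$(\omega)$ for $\J_m$. Every $c_m$-homogeneous set then lies in $\J_m$.

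The first step is to merge the countably many colourings into one colouring with finitely many colours on a single positive set. We do this by diagonalization. Define
\[
c(\{i,j\})=\bigl(c_0(\{i,j\}),\dots,c_{\min(i,j)}(\{i,j\})\bigr)
\]
for $i<j$. This has unboundedly many colours, so instead we use a binary code. Set $d(\{i,j\})=c_{m(i,j)}(\{i,j\})$ for a suitable index function $m$, or better, pass through the $\R^n_k$ form of Theorem~\ref{thm:kat}.

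The cleanest route is to partition pairs by their minimum into blocks. We choose $i\mapsto m(i)$ so that each $m$ is hit infinitely often, say $m(i)=$ the exponent of $2$ in $i+1$. For $i<j$ we colour $\{i,j\}$ by $c_{m(i)}(\{i,j\})$.

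Suppose $H\in\Ipos$ is homogeneous for this single $2$-colouring. We want to derive a contradiction, and the obstacle is that homogeneity of $H$ only controls $c_{m(i)}$ on pairs whose minimum is $i$. To fix this, we first thin out using positivity. Since $H\notin\I$, there is $m$ with $H\notin\J_m$, so $H\cap\{i:m(i)=m'\}\notin\J_m$ for some $m'$. The ideal is proper, but positivity need not split well across this partition. This is the main difficulty, and I would address it by a different construction.

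Use a $2^{<\omega}$-valued colouring and the finite-colour collapse instead. By Theorem~\ref{thm:kat}, equivalently \cite[Fact 3.2]{CDU2026}, a Ramsey ideal is homogeneous for every colouring $[Y]^2\to k$ on a positive $Y$, for every $k$. We argue by contradiction and assume $\I$ is Ramsey.

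Start with $Y_0=\omega$. Recursively apply the Ramsey property to $c_m\upharpoonright[Y_m]^2$ to get a positive $Y_{m+1}\subseteq Y_m$ that is $c_m$-homogeneous. Each $Y_{m+1}$ lies in $\J_m$ by the choice of $c_m$, and the sets are decreasing. This yields a decreasing sequence of positive sets with $Y_{m+1}\in\J_m$, hence $Y_{n}\in\J_m$ for all $n>m$.

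Now diagonalize. Choose $y_n\in Y_n$ increasing, where the choice is made at the level of positive sets rather than points. This requires a $p^+$-type pseudo-intersection, and $p^+$ fails by Theorem~\ref{thm:standard}. So we instead use the colouring $e(\{i,j\})=c_{r(i)}(\{i,j\})$ with $r(i)=\max\{m: i\in Y_m\}$, after arranging $\bigcap Y_m=\emptyset$ (remove a finite set at each stage).

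An $e$-homogeneous positive $Z\subseteq Y_0$ is then split into the layers $L_m=Z\cap(Y_m\setminus Y_{m+1})$. If some $L_m$ is positive, then $L_m$ is $c_m$-homogeneous, so $L_m\in\J_m$. Its positivity must then come from another $\J_{m'}$; but a $\J_{m'}$-positive subset is handled by recursing with that index.

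The hard step is thus the bookkeeping. We must ensure that any positive homogeneous set is $\J_m$-small for every $m$, i.e. that it lies in $\I$. I would try first to construct the single colouring via a fusion in which every index $m$ is revisited infinitely often along the layers. Smallness in $\J_m$ of the union of the layers beyond stage $m$ then follows, because those layers lie inside $Y_{m+1}\in\J_m$. The finitely many earlier layers must each be handled by an earlier index, which requires each earlier layer to itself be in every $\J_{m}$. This is where the weak selectivity and $q^+$ items of Theorem~\ref{thm:standard} would be used to shrink to a selector meeting each early layer finitely.
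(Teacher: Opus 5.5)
Your proposal has a genuine gap. The decisive step is producing a single $\I$-positive set that is almost homogeneous for all the $c_m$ at once, and this step is never carried out. The devices you try cannot carry it out either.

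\textbf{Where the argument stalls.} The recursion $Y_{m+1}\subseteq Y_m$ is fine. As you note, though, diagonalizing it needs a positive pseudo-intersection, and nothing makes $Y_m\setminus Y_{m+1}$ small.

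Now look at what the layered colouring $e$ actually gives. If every layer $L_k=Z\cap(Y_k\setminus Y_{k+1})$ lies in $\I$, you are already done, with no selectors needed. Indeed $Z=\bigcup_{k\le m}L_k\cup(Z\cap Y_{m+1})\in\J_m$ for every $m$, so $Z\in\I$.

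The whole difficulty is therefore a positive layer $L_k$. For such a layer you only know $L_k\in\J_0\cap\dots\cap\J_k$. Moreover $e\upharpoonright[L_k]^2=c_k$ carries no information about $c_{k'}$ for $k'>k$, so $L_k$ may well be $\J_{k'}$-positive. Your ``recursing with that index'' has no termination argument: each restart can again land in a positive layer.

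Weak selectivity and $q^+$ cannot rescue this case. They yield positive selectors only for partitions into $\I$-small (respectively finite) pieces. That is exactly the case already settled, and they say nothing once some layer is positive. So your final paragraph is a plan, not an argument.

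Your opening aim of refuting $\omega\to(\Ipos)^2_2$ also claims more than the theorem does. You rightly switch to contradicting the hereditary property, which is all that is needed.

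\textbf{The missing idea.} It is simultaneous almost-homogeneity, which the paper quotes from \cite[Prop.~3.15]{CDU2026}. A Ramsey ideal has $H\in\Ipos$ and finite $F_m\subseteq H$ with $c_m$ constant on $[H\setminus F_m]^2$ for every $m$. Then $H\setminus F_m\in\J_m$, hence $H\in\J_m$ for all $m$, i.e.\ $H\in\I$. That is the contradiction, in three lines.

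If you want to prove this rather than quote it, drop the decreasing sequence and proceed as follows.
\begin{enumerate}
\item Use the lexicographic colouring from Lemma~\ref{lem:toprep-nonramsey}. Put $x_i=(\mathbf 1[i\in C_n])_n\in2^\omega$ and colour $\{i<j\}$ by whether $x_i\le_{\mathrm{lex}}x_j$. A positive homogeneous set is then almost contained in $C_n$ or in its complement, for every member $C_n$ of a given countable family.
\item Apply this to the sets $\{j>i:c_m(\{i,j\})=1\}$. This gives limit colours $\epsilon_m(i)$.
\item Apply it again to the sets $\{i:\epsilon_m(i)=1\}$, which stabilizes those limit colours.
\item Below the resulting positive set, apply the Ramsey property once more, to the colouring ``$c_m(\{i,j\})=\epsilon_m(i)$ for all $m\le i$''. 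Colour $0$ of this colouring cannot be homogeneous on any infinite set, so you get colour $1$, and the almost-homogeneity follows.
\end{enumerate}

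Finally, the ``in particular'' clause about topological representations still requires Lemma~\ref{lem:toprep-nonramsey}, which your proposal does not mention.
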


\begin{proof}
Suppose $\I$ were Ramsey. For each $m$ the failure of $\omega\to(\J_m^+)^2_2$ provides a
colouring $c_m:[\omega]^2\to2$ all of whose homogeneous sets lie in $\J_m$. By the
simultaneous almost-homogeneity theorem (\cite[Prop.~3.15]{CDU2026}) applied to the
Ramsey ideal $\I$, there is $H\in\Ipos$ almost homogeneous for every $c_m$: for each $m$
there is a finite $F_m\subseteq H$ with $c_m$ constant on $[H\setminus F_m]^2$. Then
$H\setminus F_m\in\J_m$, and ideals are invariant under finite changes, so $H\in\J_m$ for
every $m$; hence $H\in\bigcap_m\J_m=\I$, contradicting $H\in\Ipos$. The second statement
follows from Lemma~\ref{lem:toprep-nonramsey}.
\end{proof}

\begin{remark}[non-redundancy]\label{rem:nonredundant}
Theorem~\ref{thm:int-nonramsey} does not imply Theorem~\ref{thm:int-sub}: what a tall
analytic $P$-ideal provides is $\R\leK\J\upharpoonright X$ on a positive restriction, and
this does not give $\R\leK\J$, i.e.\ failure of ordinary Ramsey$(\omega)$. Conversely,
Theorem~\ref{thm:int-sub} requires the pieces to be $P$-ideals.
\end{remark}

\begin{corollary}[the representation frontier]\label{cor:frontier}
By Kwela--Sabok \cite{KwelaSabok2015}, every coanalytic weakly selective ideal is an
intersection of topologically represented ideals; the Ramsey property implies weak
selectivity, so this applies to every putative Borel example.
Theorem~\ref{thm:int-nonramsey} shows the representing intersection has no countable
subfamily with the same intersection. Hence either statement below settles
Question~\ref{q:main} negatively:
\begin{itemize}
\item every Borel weakly selective ideal is, on some positive restriction, a countable
intersection of topologically represented ideals; or
\item the Kwela--Sabok representation of a Borel ideal admits a countable cofinal subfamily
on some positive restriction.
\end{itemize}
The coanalytic example escapes precisely by having essential uncountable cofinality here.
\end{corollary}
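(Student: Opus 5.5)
The statement to prove is Corollary~\ref{cor:frontier}. It is an assembly of three ingredients already available: the Kwela--Sabok representation theorem, the implication ``Ramsey $\Rightarrow$ weakly selective'' from Theorem~\ref{thm:standard}, and Theorem~\ref{thm:int-nonramsey} together with Lemma~\ref{lem:toprep-nonramsey}. The plan is to check that each ingredient applies to a putative Borel example and that this applicability survives passing to positive restrictions. The two bulleted alternatives then follow by contradiction.

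\emph{Step 1: the representation exists.} Let $\I$ be a tall Borel Ramsey ideal. By Theorem~\ref{thm:standard}, every positive restriction $\I\upharpoonright X$ is weakly selective. Since $\I$ is Borel, $\I\upharpoonright X$ is also Borel: it is the preimage of $\I$ under the continuous map $A\mapsto A\cap X$. In particular $\I\upharpoonright X$ is coanalytic. So \cite{KwelaSabok2015} writes $\I\upharpoonright X=\bigcap_{\alpha}\J_\alpha$ with each $\J_\alpha$ topologically represented. One point needs checking here: $\I\upharpoonright X$ lives on $\omega$ but is trivial off $X$. I would either verify that Kwela--Sabok applies verbatim to it, or apply their theorem to the copy of $\I\upharpoonright X$ on $X$ and transport back along a bijection $X\to\omega$. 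Topological representability is invariant under relabelling the points by a bijection, so the transport is harmless.

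\emph{Step 2: no countable subfamily works.} Suppose some countable subfamily $\{\J_{\alpha_m}\}_m$ had $\bigcap_m\J_{\alpha_m}=\I\upharpoonright X$. By Lemma~\ref{lem:toprep-nonramsey}, each $\J_{\alpha_m}$ fails ordinary Ramsey$(\omega)$. Theorem~\ref{thm:int-nonramsey} then says $\I\upharpoonright X$ is not Ramsey. But the Ramsey property passes to positive restrictions by Theorem~\ref{thm:kat}, since the positive subsets of $\I\upharpoonright X$ are among the positive sets of $\I$. This is a contradiction.

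\emph{Step 3: both alternatives settle Question~\ref{q:main} negatively.} The first bullet applies to $\I$ because $\I$ is Borel and weakly selective. It yields a positive $X$ on which $\I\upharpoonright X$ is a countable intersection of topologically represented ideals, which Step~2 forbids. The second bullet yields a countable cofinal subfamily of the Kwela--Sabok representation on some positive restriction. Here I would fix the meaning of ``cofinal'': it should mean a countable subfamily with the same intersection on that restriction. Once that is granted, Step~2 again gives a contradiction. Either way, no tall Borel Ramsey ideal exists.

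The final sentence about the coanalytic example is interpretive. The content to record is that this example is Ramsey and coanalytic, so by Step~2 its Kwela--Sabok representation has no countable subfamily with the same intersection on any positive set.

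The main obstacle is Step~1's bookkeeping rather than any new combinatorics. Concretely, it is confirming that the Kwela--Sabok theorem yields a representation of each restriction $\I\upharpoonright X$ that is compatible with the family used in the second bullet. If ``cofinal'' were meant in a weaker sense than ``same intersection'', one would additionally need to show that the intersection of a cofinal subfamily equals the full intersection. I would first check this via monotonicity: any $\J_\alpha$ containing some $\J_{\alpha_m}$ contributes nothing new to the intersection.
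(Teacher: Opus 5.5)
Your proposal is correct and is essentially the paper's own argument, which the paper gives inline in the statement itself. It applies Kwela--Sabok to the Borel, hence coanalytic, weakly selective ideal (Theorem~\ref{thm:standard}), combines Lemma~\ref{lem:toprep-nonramsey} with Theorem~\ref{thm:int-nonramsey} to exclude a countable subfamily, and uses heredity of the Ramsey property (Theorem~\ref{thm:kat}) to localize to $X$.

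The restriction bookkeeping you flag for the second bullet closes at once. Theorem~\ref{thm:int-nonramsey} only needs each piece to fail ordinary Ramsey$(\omega)$, and that property passes to superideals: a colouring all of whose homogeneous sets lie in $\J_\alpha$ also has them in $\J_\alpha\upharpoonright X\supseteq\J_\alpha$. So restricting the representation of $\I$ itself to $X$ works equally well.
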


The countable/analytic shadow of the frontier is a topological width bound. In
cardinal-invariant form the first half of the next theorem is folklore (reaping families
are unsplittable, hence non-$\sigma$-compact); the point is the Hurewicz-type
strengthening for analytic families.

\begin{theorem}[width is continuum]\label{thm:width}
Let $\mathcal N\subseteq[\omega]^\omega$ be a positive $\pi$-base mod finite for the coideal
of a proper ideal (every positive set almost contains a member). Then $\mathcal N$ is
unsplittable, hence uncountable; and if $\mathcal N$ is analytic it contains a copy of
$\omega^\omega$ closed in $[\omega]^\omega$.
\end{theorem}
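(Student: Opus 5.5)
The plan is to prove the three claims in the order stated: unsplittability, then uncountability, then the Hurewicz-type conclusion. Fix the proper ideal $\J$ (containing $\mathrm{Fin}$) with $\mathcal N\subseteq\J^+$, such that every $P\in\J^+$ almost contains some $N\in\mathcal N$.

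\emph{Unsplittability.} Let $S\subseteq\omega$ be arbitrary. Since $\J$ is proper, $S$ and $\omega\setminus S$ are not both in $\J$, so one of them, say $P$, is positive. By hypothesis there is $N\in\mathcal N$ with $N\subseteq^* P$. Then $N$ is almost contained in $S$ or almost disjoint from it, so $S$ does not split $N$. Thus no single set splits all of $\mathcal N$.

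\emph{Uncountability.} Every countable family of infinite sets has a splitter; this is the trivial case of Lemma~\ref{lem:splitter}, since singletons are compact. Hence $\mathcal N$ is uncountable. More is true: $\mathcal N$ is not contained in any $\sigma$-compact subset of $[\omega]^\omega$. Indeed, if $\mathcal N\subseteq\K$ with $\K$ $\sigma$-compact, then Lemma~\ref{lem:splitter} gives a common splitter of $\K$, hence of $\mathcal N$, contradicting the first step.

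\emph{Analytic case.} Regard $[\omega]^\omega$ as a $G_\delta$ subspace of $2^\omega$, hence Polish. Apply the Hurewicz--Kechris--Saint Raymond dichotomy: an analytic subset of a Polish space is either contained in a $K_\sigma$ subset of that space, or contains a closed (relative to the space) subset homeomorphic to $\omega^\omega$. The first alternative was just excluded, so $\mathcal N$ contains a copy of $\omega^\omega$ closed in $[\omega]^\omega$.

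\emph{Main obstacle.} The only delicate point is matching the ambient space in the dichotomy. I will apply it inside the Polish space $[\omega]^\omega$ itself, not inside $2^\omega$. This matters because Lemma~\ref{lem:splitter} is stated for sets that are $\sigma$-compact \emph{in} $[\omega]^\omega$: its proof uses that $q_k$ is continuous and finite-valued on compact subsets of $[\omega]^\omega$. A compact subset of $2^\omega$ containing finite sets would not qualify. Working in $[\omega]^\omega$ also makes the resulting copy of $\omega^\omega$ closed in $[\omega]^\omega$, which is exactly what the statement asserts.
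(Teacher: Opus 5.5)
Your proof is correct and follows the paper's argument step for step. It uses the proper-ideal dichotomy for unsplittability, Lemma~\ref{lem:splitter} to rule out any $K_\sigma$ superset of $\mathcal N$ in $[\omega]^\omega$, and the Hurewicz dichotomy for analytic sets applied inside the Polish space $[\omega]^\omega$. The only cosmetic difference is that you get the countable case from Lemma~\ref{lem:splitter} (countable sets are $\sigma$-compact), whereas the paper builds a splitter directly; your point about the ambient space is exactly right, since in $2^\omega$ the dichotomy would be vacuous.
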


\begin{proof}
First, $\mathcal N$ is unsplittable: given $Z\subseteq\omega$, at least one of $Z$,
$\omega\setminus Z$ is positive (the ideal is proper); that side almost contains some
$N\in\mathcal N$, and then $Z$ does not split $N$. Every countable family of infinite
sets has a common splitter (visit each member infinitely often, alternately placing and
withholding fresh points), so $\mathcal N$ is uncountable. By Lemma~\ref{lem:splitter}
no subfamily of a $\sigma$-compact family is unsplittable either, so $\mathcal N$ is not
contained in a $K_\sigma$ subset of $[\omega]^\omega$. If $\mathcal N$ is analytic, the
Hurewicz dichotomy for analytic sets (\cite[Thm~21.18]{Kechris1995}) says it is either
contained in a $K_\sigma$ set or contains a closed-in-the-ambient-space copy of
$\omega^\omega$; the first alternative has just been excluded.
\end{proof}

\begin{remark}
For the coanalytic example this proves its node/residual families contain closed copies of
$\omega^\omega$: the continuum width is forced, already at the level of arbitrary ideals. It
also explains why any ``countable width'' reduction is a category error: no ideal has a
countable positive $\pi$-base mod finite.
\end{remark}

\section{The reduced presentation below $\nwd$}\label{sec:reduced}

By Theorem~\ref{thm:standard}, a tall Borel Ramsey ideal satisfies $\I\leK\nwd$. We upgrade
this to an honest containment on a positive set.

\begin{corollary}[normal form: honestly below $\nwd$]\label{cor:normal}
Let $\I$ be a tall Borel Ramsey ideal. There are $T\in\Ipos$, a countable dense-in-itself
$S\cong\Q$, and a bijection $h:S\to T$ such that
$\overline{\I}=\{A\subseteq S:h[A]\in\I\}$ is a tall Borel Ramsey ideal with
$\overline{\I}\subseteq\nwd(S)$. Thus WLOG $\I\subseteq\nwd(\Q)$, and every somewhere dense
subset of $\Q$ is $\I$-positive.
\end{corollary}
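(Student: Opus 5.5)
Start from the bound $\I\leK\nwd$ of Theorem~\ref{thm:standard}. Fix a Katětov witness $f:\Q\to\omega$, so $f^{-1}[A]\in\nwd$ for every $A\in\I$. We want to turn this into an injective map and then invert it; injectivization is exactly what Proposition~\ref{prop:inj} does. However, that proposition needs the \emph{target} ideal of the Katětov reduction to be weakly selective. Here the target is $\nwd$, which is not weakly selective: partition $\Q$ into the sets $\Q\cap(n,n+1)$, suitably reindexed so the pieces are nowhere dense and any selector is discrete. So Proposition~\ref{prop:inj} cannot be applied verbatim, and I would reprove the relevant step by hand inside $\nwd$.

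The fibers $f^{-1}(\{n\})$ are nowhere dense, since $\{n\}\in\I$. Enumerate a base $\{U_k\}$ of $\Q$ and build $S$ recursively as a dense-in-itself set meeting each fiber at most once. At stage $k$, inside each basic open set that must be refilled, pick a point whose fiber has not yet been used. The finitely many fibers used so far have nowhere dense union, so they miss some open subset of each $U_k$. This open subset contains points, and their fibers are not yet used as long as each fiber is avoided entirely. We therefore choose points $q$ lying outside the closure of the finitely many previously used fibers.

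Carrying this out by a back-and-forth recursion gives a countable $S\subseteq\Q$ with no isolated points, with $f\upharpoonright S$ injective, and with $S$ somewhere dense. Making $S$ somewhere dense requires adding a point of $S$ to every $U_k$ inside a fixed open interval $W$. That is exactly what the recursion above does, because each finite union of fibers is nowhere dense. Then $S\cong\Q$ by Sierpiński's theorem. Set $T=f[S]$ and $h=(f\upharpoonright S)^{-1}:T\to S$.

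Next, $T\in\Ipos$: if $T\in\I$, then $f^{-1}[T]\supseteq S$ would be nowhere dense, contradicting the somewhere density of $S$. Transport $\I\upharpoonright T$ to $\overline{\I}$ on $S$ as in the statement, with the index direction read via $h$. For $A\subseteq S$ with $h^{-1}[A]\in\I$ we have $A\subseteq f^{-1}[h^{-1}[A]]\cap S$, which is nowhere dense in $\Q$. It is therefore nowhere dense in $S\cap W$, which is open dense in $S$ after restricting $S$ to $W$ from the outset. Hence $\overline{\I}\subseteq\nwd(S)$.

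The remaining properties transfer directly. Tallness, Borelness (via a bijection) and the Ramsey property of $\overline{\I}$ follow from those of $\I\upharpoonright T$; positive restrictions stay Ramsey by Theorem~\ref{thm:kat}, and tallness passes to restrictions. The final sentence of the statement is the contrapositive of the containment $\overline{\I}\subseteq\nwd(S)$.

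The main obstacle is the recursion. It must simultaneously keep $f$ injective on $S$ and force $S$ to be dense in $W$ and dense-in-itself. The point to verify is that, at each stage, a basic open set minus the closure of finitely many nowhere dense fibers is still nonempty open. If the recursion ever adds only one point per stage, we must guard against choosing a point whose own fiber later blocks some $U_k$. This is harmless, because each single fiber is nowhere dense and the blocked sets at any stage form a finite union.
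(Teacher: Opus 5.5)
Your proof is correct and is essentially the paper's: the paper obtains an injective somewhere dense set by applying Proposition~\ref{prop:inj} to the Kat\v{e}tov witness $f:\Q\to\omega$ and then restricts to an interval in which that set is dense, and your recursion (placing the $k$-th point in $U_k$ outside the closure of the finitely many fibers already used) is exactly the weak-selectivity argument that Proposition~\ref{prop:inj} relies on, specialised to the fiber partition. The one error is your side remark that $\nwd$ is not weakly selective: it is weakly selective, since the same recursion gives a somewhere dense selector for any partition of a somewhere dense set into nowhere dense pieces (and your proposed pieces $\Q\cap(n,n+1)$ are open, not nowhere dense), so Proposition~\ref{prop:inj} in fact applies verbatim, as the paper uses it.
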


\begin{proof}
By Theorem~\ref{thm:standard}, $\I\leK\nwd$; realize $\nwd$ on $\Q$ and fix a witness
$f:\Q\to\omega$. The ideal $\nwd(\Q)$ is weakly selective, so Proposition~\ref{prop:inj}
gives a somewhere dense $Y\subseteq\Q$ on which $f$ is injective. Choose a rational open
interval $U$ in which $Y$ is dense and put $S=Y\cap U$ and $h=f\upharpoonright S$. Then
$S$ is countable and dense in itself, hence homeomorphic to $\Q$ by Sierpi\'nski's
characterization, and for $A\subseteq S$, nowhere density in $S$, in $U$, and in $\Q$ all
coincide. Put $T=h[S]$. If $T\in\I$, then $S\subseteq f^{-1}[T]\in\nwd(\Q)$, contradicting
that $S$ is dense in $U$; so $T\in\Ipos$. If $A\subseteq S$ and $h[A]\in\I$, then
$A\subseteq f^{-1}[h[A]]\in\nwd(\Q)$, so $A\in\nwd(S)$: this is the containment
$\overline\I\subseteq\nwd(S)$. Finally, $\overline\I$ is the isomorphic copy of
$\I\upharpoonright T$ induced by the bijection $h:S\to T$, and tallness, Borelness and the
Ramsey property pass to positive restrictions (Theorem~\ref{thm:kat}) and to isomorphic
copies. Identifying $S$ with $\Q$ gives the normal form; the last clause restates
$\overline\I\subseteq\nwd(S)$.
\end{proof}

In the normal form, positivity concentrates on convergent sequences: by the $h$-FinBW
property of Ramsey ideals (\cite[Prop.~3.7(f)]{CDU2026}), if $\I\subseteq\nwd(D)$ with $D$
dense in a compact interval $K$, then every $X\in\Ipos$ contains a positive $C$ converging
in $K$. What is new is the global object this produces.

\begin{proposition}[the limit spectrum]\label{prop:conv}
Let $\I\subseteq\nwd(D)$ be a tall Borel Ramsey ideal, $D$ dense in a compact interval $K$.
The \emph{limit spectrum}
\[
   L_\I=\{z\in K:(\exists C\in\Ipos)\ C\to z\}
\]
is analytic and dense in $K$, hence \emph{countable or containing a perfect set}.
\end{proposition}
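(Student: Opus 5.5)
The proof has three parts: analyticity of $L_\I$ by a projection argument, density from the $h$-FinBW property applied inside small open sets, and the dichotomy from the perfect set theorem for analytic sets.

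\emph{Analyticity.} Fix an enumeration of $D$, so that subsets $C\subseteq D$ are points of $2^D\cong2^\omega$. Consider
\[
P=\{(C,z)\in 2^D\times K:\ C \text{ infinite},\ C\notin\I,\ C\to z\}.
\]
Infinitude of $C$ is a $G_\delta$ condition. The condition $C\notin\I$ is Borel, because $\I$ is Borel. For $C$ infinite, the condition $C\to z$ says that for every rational $\varepsilon>0$ only finitely many $d\in C$ satisfy $|d-z|\ge\varepsilon$. This is the statement $\forall\varepsilon\,\exists N\,\forall d\in C\ (d\ge_{\mathrm{enum}}N\Rightarrow|d-z|<\varepsilon)$, and the inner condition is closed in $(C,z)$ for fixed $\varepsilon,N$, so the whole condition is Borel. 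Hence $P$ is Borel and $L_\I=\pi_K[P]$ is analytic as a continuous image. Note that every $\I$-positive $C$ is infinite, since $\I$ contains the finite sets; so convergence of $C$ to $z$ is meaningful, and $z$ is uniquely determined.

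\emph{Density.} Let $U\subseteq K$ be a nonempty open set. Because $D$ is dense in $K$, the set $X=D\cap U$ is somewhere dense in $D$. Since $\I\subseteq\nwd(D)$, this gives $X\in\Ipos$. By the $h$-FinBW property of Ramsey ideals (\cite[Prop.~3.7(f)]{CDU2026}), as quoted just before the statement, $X$ contains a positive $C$ converging to some $z\in K$. Then $z\in\overline U$, so $L_\I$ meets $\overline U$.

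To get $z\in U$ itself, I apply the same argument to a smaller set. Choose a nonempty open $V$ with $\overline V\subseteq U$, and run the argument on $D\cap V$. This produces a limit $z\in\overline V\subseteq U$. Therefore $L_\I$ meets every nonempty open subset of $K$, i.e.\ it is dense in $K$.

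\emph{Dichotomy.} By the perfect set theorem for analytic sets (Suslin; \cite{Kechris1995}), every analytic subset of a Polish space is either countable or contains a perfect set. Applying this to $L_\I\subseteq K$ gives the final clause.

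The only delicate step is the Borel computation of the convergence relation. I handle it by quantifying over rational $\varepsilon$ and over an enumeration of $D$, as above.
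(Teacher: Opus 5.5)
Your proposal is correct and follows the paper's proof step for step. Analyticity comes from projecting the Borel relation ``$C\notin\I$ and $C\to z$''; density comes from applying the quoted $h$-FinBW property to $D\cap V$ with $\overline V\subseteq U$; and the dichotomy comes from the perfect set theorem for analytic sets. One small slip: for fixed $\varepsilon,N$, your inner condition with the strict inequality $|d-z|<\varepsilon$ is $G_\delta$ rather than closed (write $|d-z|\le\varepsilon$ if you want it closed), which does not affect Borelness.
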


\begin{proof}
Since $\I$ is Borel, the relation ``$C\notin\I$ and $C$ converges to $z$'' is Borel in
$(C,z)$, so its projection $L_\I$ is analytic; an uncountable analytic set contains a
perfect set, whence the last clause. For density, let $U\subseteq K$ be open and nonempty
and choose open nonempty $V$ with $\overline V\subseteq U$. The set $D\cap V$ is somewhere
dense in $D$, hence $\I$-positive because $\I\subseteq\nwd(D)$. By the $h$-FinBW property
quoted above, $D\cap V$ contains a positive $C$ converging in $K$; its limit lies in
$\overline V\subseteq U$, so $L_\I\cap U\ne\emptyset$.
\end{proof}

\begin{remark}\label{rem:selfsimilar}
The construction is self-similar: $\I\upharpoonright C$ is again tall Borel Ramsey, so it
again admits its own reduced presentation below $\nwd$ under re-embedding of $C$ as $\Q$. Any
rank argument must break this self-similarity at the level of \emph{presentations}, not of
the ideal (see \S\ref{sec:targets}).
\end{remark}

\section{Two necessary conditions}\label{sec:necessary}

Each theorem of this section rules out an entire family of candidate constructions. The
$\leKB$ version of the first is close to the known $q^+$ characterizations; the content
here is the upgrade to \emph{arbitrary} Katětov witnesses (via
Proposition~\ref{prop:inj}), which is exactly what carrier programs require.

\begin{theorem}[no positive $\EDfin$-carrier]\label{thm:carrier}
If $\I$ is weakly selective and $q^+$ (both necessary for Ramsey, Theorem~\ref{thm:standard}),
then for \emph{every} $Y\in\Ipos$ no map witnesses $\EDfin\leK\I\upharpoonright Y$.
Consequently, for any subideal $\J\subseteq\I$ and any $Y\in\Ipos$,
$\EDfin\leK\J\upharpoonright Y$ is impossible: the Greb\'ik--Vidny\'anszky carrier program
cannot produce an $\I$-positive carrier.
\end{theorem}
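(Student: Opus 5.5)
The plan is to argue by contradiction, turning a hypothetical Katětov witness into an $\I$-positive set whose image is a selector of the columns of $\Delta$. Suppose $Y\in\Ipos$ and $f:Y\to\Delta$ witnesses $\EDfin\leK\I\upharpoonright Y$; any values of $f$ off $Y$ are irrelevant. Two points I will check first. Weak selectivity and $q^+$ pass from $\I$ to $\I\upharpoonright Y$, since both are statements about partitions of $\I$-positive sets and every positive subset of $Y$ is $\I$-positive. Also, every finite subset of $\Delta$ lies in $\EDfin$, so every preimage of a finite set is in $\I\upharpoonright Y$.

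The first step is injectivization via Proposition~\ref{prop:inj}, applied with $\J=\I\upharpoonright Y$ and $\EDfin$ in the role of the ideal on $E=\Delta$. It gives a positive $Z\subseteq Y$ on which $f$ is injective. The second step uses the columns $\mathrm{col}_n=\{(n,m):m\le n\}$. The sets $P_n=f^{-1}[\mathrm{col}_n]\cap Z$ partition $Z$, and each $P_n$ is finite because $f\upharpoonright Z$ is injective and $\mathrm{col}_n$ is finite. By $q^+$ there is a positive $W\subseteq Z$ with $|W\cap P_n|\le1$ for every $n$.

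The third step derives the contradiction. By construction $f[W]$ meets each column in at most one point, so $f[W]\in\EDfin$. Since $f$ is a Katětov witness, $W\subseteq f^{-1}[f[W]]\cap Y\in\I$, which contradicts $W\in\Ipos$.

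In fact weak selectivity alone already suffices: each $f^{-1}[\mathrm{col}_n]\cap Y$ lies in $\I$ because finite sets are in $\EDfin$, and a positive selector of that partition does the same job. I would still present the route through Proposition~\ref{prop:inj} and $q^+$, matching the stated hypotheses, and mention the shortcut in a remark.

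For the consequence, let $\J\subseteq\I$ and $Y\in\Ipos$; then $Y\in\J^+$ as well. Since $\J\upharpoonright Y\subseteq\I\upharpoonright Y$, any $f$ witnessing $\EDfin\leK\J\upharpoonright Y$ also witnesses $\EDfin\leK\I\upharpoonright Y$, which the first part excludes. An $\I$-positive carrier in the Grebík--Vidnyánszky sense is exactly such a local witness for a subideal, so none exists.

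I expect no real obstacle. The only delicate point is bookkeeping: the selector must be taken inside $Y$, and the partition must be by column preimages, not by point fibers. Point fibers only give injectivity, whereas the $\EDfin$-membership of $f[W]$ needs at most one point per column.
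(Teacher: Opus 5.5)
Your argument is correct and is essentially the paper's own proof: injectivize via weak selectivity on the point fibers, partition the resulting positive set by the finite column preimages $f^{-1}[\mathrm{col}_n]$, and use $q^+$ to obtain a positive selector whose image meets each column of $\Delta$ at most once, hence lies in $\EDfin$. Your side remark is also right and slightly sharpens the paper. Each column is finite, so the column preimages $f^{-1}[\mathrm{col}_n]\cap Y$ are already $\I$-small, and weak selectivity alone yields the positive selector; the $q^+$ hypothesis is therefore redundant, consistent with $\mathcal{ED}\leK\EDfin$ and the Kat\v{e}tov characterization of weak selectivity.
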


\begin{proof}
Suppose $f:Y\to\Delta$ witnesses $\EDfin\leK\I\upharpoonright Y$. Every cell
$\{(n,m)\}$ is finite, hence in $\EDfin$, so every nonempty fiber $f^{-1}[\{(n,m)\}]$
meets $Y$ in an $\I$-small set: the fibers partition $Y$ into countably many
$\I$-small pieces. Weak selectivity below $Y$ gives a positive $S\subseteq Y$ meeting
each fiber in at most one point, so $f\upharpoonright S$ is injective. Column $n$ of
$\Delta$ has $n+1$ cells, so $S_n=S\cap f^{-1}[\{(n,m):m\le n\}]$ has at most $n+1$
elements: the sets $(S_n)_{n\in\omega}$ partition the positive set $S$ into finite
pieces. Now let $T$ be \emph{any} selector of $(S_n)_n$. Since $f\upharpoonright S$ is
injective and picks at most one point per column, $f[T]$ has at most one point in each
column of $\Delta$, so $f[T]\in\EDfin$, whence
$T\subseteq f^{-1}[f[T]]\cap Y\in\I$. Thus every selector of a partition of the positive
set $S$ into finite pieces is $\I$-small, contradicting $q^+$ (which grants such a
partition a positive selector).

For the consequence: if $\J\subseteq\I$, $Y\in\Ipos$ and $f$ witnessed
$\EDfin\leK\J\upharpoonright Y$, then, since $\J\upharpoonright Y\subseteq
\I\upharpoonright Y$, the same $f$ would witness $\EDfin\leK\I\upharpoonright Y$, which
the theorem forbids.
\end{proof}

\begin{theorem}[no countable dense set in the quotient]\label{thm:qdense}
If $\I$ is tall Borel Ramsey, then no positive restriction of $\mathcal P(\omega)/\I$ has a
countable dense subset.
\end{theorem}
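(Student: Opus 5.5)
The plan is to show that a countable dense subset below $X$ would make $\mathcal P(\omega)/\I\upharpoonright X$ a nontrivial countable forcing, i.e.\ Cohen forcing, contradicting the $(\omega,2)$-distributivity listed in Theorem~\ref{thm:standard}. Fix $X\in\Ipos$ and suppose $\{D_n:n\in\omega\}$ is dense below $X$: every positive $A\subseteq X$ satisfies $D_n\setminus A\in\I$ for some $n$. Two structural facts about $\QI$ below $X$ are needed. First, the order is separative. If $A\not\le B$, then $A\setminus B$ is positive, lies below $A$, and is incompatible with $B$. Second, the order is atomless. If some positive $A$ were an atom, then every $B\subseteq A$ would have $B$ or $A\setminus B$ in $\I$. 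Then $\{B\subseteq A: A\setminus B\in\I\}$ would be a nonprincipal ultrafilter on $A$; it is nonprincipal because $\I$ contains the finite sets. This ultrafilter is Borel because $\I$ is Borel, which is impossible, since nonprincipal ultrafilters lack the Baire property.

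The key step is to define, for each $n$, the set $E_n=\{A\le X: D_n\setminus A\in\Ipos\}$ of conditions not lying above $D_n$.

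\emph{$E_n$ is open.} If $A\in E_n$ and $A'\le A$, then $D_n\setminus A'\supseteq (D_n\setminus A)\setminus(A'\setminus A)$. This is positive minus a small set, hence positive.

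\emph{$E_n$ is dense below $X$.} Given $B\le X$, atomlessness lets us split $B$ into two disjoint positive pieces $B_0,B_1$. If neither lay in $E_n$, then $D_n\le B_0$ and $D_n\le B_1$, so $D_n\le B_0\cap B_1=\emptyset$, contradicting $D_n\in\Ipos$. So some $B_i\le B$ lies in $E_n$.

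Now suppose $\bigcap_n E_n$ contains a positive $A\le X$. Density gives $D_m\le A$ for some $m$, i.e.\ $D_m\setminus A\in\I$, while $A\in E_m$ says $D_m\setminus A\in\Ipos$. This is a contradiction, so it suffices to show that countably many open dense sets below $X$ have nonempty, indeed dense, intersection.

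The main obstacle is that Theorem~\ref{thm:standard} grants only $(\omega,2)$-distributivity, whereas the step above needs $(\omega,\omega)$- or $(\omega,\infty)$-distributivity. I would close the gap as follows. A poset with a countable dense set is ccc, since any antichain thins to one indexed by distinct $D_n$'s. So each $E_n$ contains a countable maximal antichain $\{a^n_k\}_k$. Code the index $k$ in binary, so that the $\omega$-partition becomes countably many $2$-partitions; in the Boolean completion these are the suprema of $\{a^n_k: \text{bit } i \text{ of } k \text{ is } 1\}$ and their complements. Applying $(\omega,2)$-distributivity to this countable family of $2$-partitions yields, below any condition, a condition deciding every bit, hence deciding $k$ for each $n$. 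It therefore lies below some $a^n_k\in E_n$ for every $n$, and the intersection is dense. This is the standard equivalence of $(\omega,2)$- and $(\omega,\omega)$-distributivity.

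Alternatively, the same conclusion can be phrased as follows: a separative atomless poset with a countable dense subset is forcing equivalent to Cohen forcing and so adds a new real, against $(\omega,2)$-distributivity. I would present the direct argument with the sets $E_n$ and cite the Cohen equivalence as a remark. Since $X$ was arbitrary, no positive restriction has a countable dense subset.
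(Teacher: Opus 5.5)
\paragraph{A gap in the decisive step.}
Your strategy is viable. You play atomlessness against the $(\omega,2)$-distributivity of Theorem~\ref{thm:standard}, where the paper instead identifies the restricted quotient with Cohen forcing and invokes \cite[Thm~4.4]{HMTU2017}. Your separativity and atomlessness arguments are correct, and so are openness, density and empty intersection of the $E_n$. The gap is in the step that carries the weight.

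The $(\omega,2)$-distributivity available here is the one of \cite[Thm~3.13]{HMTU2017}, as this paper uses it. Below every positive set, some positive set decides, modulo $\I$, each member of a countable family of \emph{actual subsets of $\omega$}, i.e.\ of elements of $\mathbb B=\mathcal P(\omega)/\I$. It says nothing about elements of $\mathrm{RO}(\mathbb B)\setminus\mathbb B$. The ``standard equivalence'' you cite, with $(\omega,\omega)$-distributivity and with adding no reals, is a fact about the complete algebra $\mathrm{RO}(\mathbb B)$, and it does not transfer to this weaker property. Indeed, by (F3) of Lemma~\ref{lem:toolkit} the quotient of a tall Borel Ramsey ideal adds a real while being $(\omega,2)$-distributive in this sense (cf.\ Remark~\ref{rem:Brep-vacuous}).

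So you may not apply the hypothesis to suprema taken ``in the Boolean completion'' without further argument. Your alternative remark is also wrong as stated. If a Cohen real contradicted $(\omega,2)$-distributivity merely by being new, Theorem~\ref{thm:standard} and (F3) together would already answer Question~\ref{q:main} negatively.

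\paragraph{Repair and comparison with the paper.}
The repair uses that ccc turns everything into countable families of genuine sets, which is exactly where the weak property applies. Apply it directly to the countable family $\{a^n_k:n,k\in\omega\}$. A positive $Z\le X$ deciding every $a^n_k$ is compatible with some member of the maximal antichain $\{a^n_k\}_k$ below $X$, hence lies below it. So $Z\in\bigcap_nE_n$ by openness.

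Your binary suprema can also be salvaged. Disjointify the $a^n_k$ inside $X$ and take unions; maximality below $X$ shows these unions are the suprema, so they do lie in $\mathbb B$. But this must be proved.

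Shorter still, you can drop the $E_n$ and ccc entirely: a positive $Z\le X$ deciding every $D_n$ is an atom. Suppose $Z_0,Z_1\le Z$ were incompatible, and pick $D_m\le Z_0$. Then $D_m$ is compatible with $Z$, so $Z\le D_m\le Z_0$, hence $Z_1\le Z_0$, which is absurd.

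The paper's route needs the properness theorem \cite[Thm~4.4]{HMTU2017} for analytic ideals. The repaired argument needs only Theorem~\ref{thm:standard} and atomlessness. Atomlessness is where definability enters, and it must: the dual of a selective ultrafilter is tall Ramsey with a two-element quotient. The repaired argument also shows clearly that countable antichains are what is fatal. This matches the appendix's point that reals read by countable antichains are $\mathbb B$-represented and hence tamed.
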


\begin{proof}
Suppose the quotient below some $X\in\Ipos$ had a countable dense subset. The quotient is
atomless below $X$: if $C\subseteq X$ were an atom, then
$\mathcal U=\{A\subseteq C:C\setminus A\in\I\}$ would be a nonprincipal ultrafilter on
$C$ (nonprincipal because $\I$ contains the singletons) which is Borel
because $\I$ is --- impossible, since a nonprincipal ultrafilter lacks the Baire
property. The quotient order is separative, so a countable dense subset below $X$ is a
countable atomless separative partial order densely embedded in the quotient below $X$;
every such order is forcing equivalent to Cohen forcing. Hence $\QI$ below $X$ is proper
and adds a real. By \cite[Thm~4.4]{HMTU2017}, a tall analytic ideal whose quotient is
proper and adds a real is locally above $\R$; applied to $\I\upharpoonright X$ this
contradicts the Ramsey property, which is hereditary (Theorem~\ref{thm:kat}).
\end{proof}

\begin{remark}\label{rem:oscillation}
Theorem~\ref{thm:carrier} has a moral that recurs throughout the paper: the local
obstruction a negative solution must exhibit \emph{cannot} carry bounded-width or block
structure (that always contradicts $q^+$), and cannot factor through boundedly many
coordinates (that dies by \cite[Thm~3.13]{CDU2026}). It must be genuinely
oscillation-theoretic, like the $\nwd$ interval colouring --- which is exactly the kind
compatible with weak selectivity and $q^+$. See also Remark~\ref{rem:see-ideal}.
\end{remark}

\section{The barrier ladder: a saturation lemma and the Nash--Williams target}\label{sec:ladder}

Cano--Di Prisco--Uzc\'ategui \cite{CDU2026} interpolate between selectivity and the Ramsey
property by a ladder of barrier-partition properties,
\[
 \text{selective}\Rightarrow\text{semiselective}\Rightarrow\text{Galvin}
 \Rightarrow\text{Nash--Williams}\Rightarrow\text{Ramsey},
\]
where, for a barrier $\mathcal B$ on $M$, $\I$ is \emph{$\mathcal B$-Ramsey} if every finite
colouring of $\mathcal B\upharpoonright X$ ($X\in\Ipos$) has an $\Ipos$-homogeneous set;
Nash--Williams means $\mathcal B$-Ramsey for every barrier, and Galvin is the open-set
version. There is no analytic tall Galvin ideal \cite[Prop.~4.12]{CDU2026}; whether there is a
tall Borel (or analytic) Nash--Williams ideal is open, and since Nash--Williams
$\Rightarrow$ Ramsey it is a strictly weaker target than Question~\ref{q:main} (the
nondefinable neighbours of this target are the separation Questions~4.47--4.48 of
\cite{CDU2026}). Two facts from \cite{CDU2026} calibrate the region. First, the
barrier-Ramsey property transfers down in rank (\cite[Prop.~4.34]{CDU2026}), and
Nash--Williams is equivalent to $\mathcal B_\alpha$-Ramseyness for uniform barriers of
every rank $\alpha<\omega_1$ (\cite[Prop.~4.43]{CDU2026}); hence $\mathscr S$-Ramsey
already implies Ramsey, and the nonexistence targets form the chain
\begin{gather*}
 \text{no tall Borel Ramsey}\ \Rightarrow\ \text{no tall Borel }\mathscr S\text{-Ramsey}\\
 \Rightarrow\ \text{no tall Borel Nash--Williams}\ \Rightarrow\
 \text{no tall Borel Galvin (known).}
\end{gather*}
Second, for each \emph{fixed} barrier $\mathcal B$ there is a \emph{coanalytic}
$\mathcal B$-Ramsey tall ideal (\cite[Thm~4.29]{CDU2026}), so the Schreier target is
optimally sharp: it would separate Borel from coanalytic at the level of a single barrier.
We record a self-contained tool and reduce the (strongest) Schreier target to a single
colouring.

\begin{lemma}[saturation]\label{lem:saturation}
Let $\mathcal K\subseteq\I$ be hereditary and tall, and let $\mathcal B$ be a barrier. If $\I$
is $\mathcal B$-Ramsey, then $\{H:\mathcal B\upharpoonright H\subseteq\mathcal K\}$ is
positively dense: for every $X\in\Ipos$ there is $H\in\Ipos$, $H\subseteq X$, with
$\mathcal B\upharpoonright H\subseteq\mathcal K$.
\end{lemma}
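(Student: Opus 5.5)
The plan is to apply $\mathcal B$-Ramseyness to the two-colouring of the barrier given by membership in $\mathcal K$. Fix $X\in\Ipos$ and colour $s\in\mathcal B\upharpoonright X$ by $c(s)=0$ if $s\in\mathcal K$ and $c(s)=1$ otherwise. By the $\mathcal B$-Ramsey property there is a positive $H\subseteq X$ with $c$ constant on $\mathcal B\upharpoonright H$. If that constant value is $0$, then $\mathcal B\upharpoonright H\subseteq\mathcal K$ and we are done. The whole proof therefore reduces to ruling out colour $1$: no $\I$-positive $H$ can have \emph{every} element of $\mathcal B\upharpoonright H$ outside $\mathcal K$.

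To exclude colour $1$ I would use tallness of $\mathcal K$, understood as the paper uses it for subfamilies: every infinite set has an infinite subset in $\mathcal K$. Suppose $H$ is positive and $\mathcal B\upharpoonright H\cap\mathcal K=\emptyset$. Since $H$ is infinite, tallness gives an infinite $Z\subseteq H$ with $Z\in\mathcal K$. By the defining property of a barrier on $M\supseteq H$, every infinite subset of $M$ has an initial segment in $\mathcal B$, so $Z$ has an initial segment $s\in\mathcal B$. Then $s\subseteq Z$ gives $s\in\mathcal K$ because $\mathcal K$ is hereditary. Also $s\subseteq Z\subseteq H$, so $s\in\mathcal B\upharpoonright H$, contradicting colour $1$. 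Hence the homogeneous colour is $0$.

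The only step needing care is this bookkeeping about $\mathcal B\upharpoonright H$. One should use the convention of \cite{CDU2026} that the restriction consists of the members of $\mathcal B$ contained in $H$, with $H$ a subset of the barrier's domain $M$; then the initial segment $s$ of $Z$ lies in $\mathcal B\upharpoonright H$ automatically. If instead the restriction is the barrier obtained by re-indexing along $H$, the same argument runs with ``initial segment of $Z$ in the restricted barrier'', which is available because the restriction of a barrier to an infinite set is again a barrier (every infinite subset of $H$ has an initial segment in it). I expect no real obstacle. Note also that the hypothesis $\mathcal K\subseteq\I$ is not used for density; it only records that the sets $H$ obtained are saturated by $\I$-small pieces.
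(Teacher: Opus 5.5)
Your proof is correct and is essentially the paper's argument: colour $\mathcal B\upharpoonright X$ by membership in $\mathcal K$ and take a positive homogeneous $H\subseteq X$. Then tallness and heredity of $\mathcal K$, together with the front property of $\mathcal B$, produce an element of $\mathcal B\upharpoonright H$ lying in $\mathcal K$, which forces the homogeneous colour to be the ``inside $\mathcal K$'' one. As you observed, the paper's proof also never uses $\mathcal K\subseteq\I$ for this density claim.
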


\begin{proof}
Fix $X\in\Ipos$ and colour $s\in\mathcal B\upharpoonright X$ by
$c_{\mathcal K}(s)=[\,s\in\mathcal K\,]$. Since $\I$ is $\mathcal B$-Ramsey there is a
positive $H\subseteq X$ with $c_{\mathcal K}$ constant on
$\mathcal B\upharpoonright H$. By tallness of $\mathcal K$ pick an infinite
$Y\subseteq H$ with $Y\in\mathcal K$; by heredity every
$s\in\mathcal B\upharpoonright Y$ lies in $\mathcal K$, and
$\mathcal B\upharpoonright Y$ is nonempty (every infinite subset of the domain of a
barrier has initial segments in the barrier). Since
$\mathcal B\upharpoonright Y\subseteq\mathcal B\upharpoonright H$, the constant colour
on $H$ is $1$; that is, $\mathcal B\upharpoonright H\subseteq\mathcal K$.
\end{proof}

\begin{proposition}[exact form of the saturation]\label{prop:exact-sat}
Let $\mathcal K\subseteq\mathcal P(M)$ be hereditary and tall and let $\mathcal B$ be a
barrier on $M$. For the membership colouring $c_{\mathcal K}:\mathcal B\to2$,
$c_{\mathcal K}(b)=[\,b\in\mathcal K\,]$, there are no infinite homogeneous sets of colour
$0$, and
\[
 \{H\in[M]^\omega:H\text{ homogeneous for }c_{\mathcal K}\}
 =\mathrm{Sat}_{\mathcal B}(\mathcal K)
 :=\{H:\mathcal B\upharpoonright H\subseteq\mathcal K\}.
\]
In particular $\mathrm{Sat}_{\mathcal B}(\mathcal K)$ is a \emph{closed}, hereditary, tall
family, with no closure or definability hypothesis on $\mathcal K$: the saturation sees
only the finite trace $\mathcal K\cap[M]^{<\omega}$. Hence the closedness of
$\mathcal K$ is convenient but not necessary in this section.
\end{proposition}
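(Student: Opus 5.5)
The plan is to prove the three claims in order: first that colour $0$ has no infinite homogeneous set, then the set equality, then closedness, heredity and tallness.

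\emph{No infinite $0$-homogeneous sets.} Let $H\in[M]^\omega$. Since $\mathcal K$ is tall, choose an infinite $Y\subseteq H$ with $Y\in\mathcal K$. Because $\mathcal B$ is a barrier, $Y$ has an initial segment $b\in\mathcal B$, and then $b\in\mathcal B\upharpoonright H$. By heredity $b\in\mathcal K$, so $c_{\mathcal K}(b)=1$. This is exactly the argument of Lemma~\ref{lem:saturation}, with the ideal removed.

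\emph{The set equality.} For infinite $H$, homogeneity means homogeneity of some colour, and colour $0$ has just been excluded. So $H$ is homogeneous iff $c_{\mathcal K}\equiv1$ on $\mathcal B\upharpoonright H$, iff $\mathcal B\upharpoonright H\subseteq\mathcal K$. One convention needs fixing here. I read $\mathrm{Sat}_{\mathcal B}(\mathcal K)$ as a family of infinite sets, matching the left-hand side, and I compute closedness in the space $[M]^\omega$.

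\emph{Structural properties.} Heredity is immediate: if $H'\subseteq H$ is infinite, then $\mathcal B\upharpoonright H'\subseteq\mathcal B\upharpoonright H$. For tallness, take any infinite $H$ and pick $Y\subseteq H$ infinite with $Y\in\mathcal K$. Every $b\in\mathcal B\upharpoonright Y$ is a subset of $Y$, hence lies in $\mathcal K$, so $Y\in\mathrm{Sat}_{\mathcal B}(\mathcal K)$.

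The only step needing care is closedness. Suppose $H\notin\mathrm{Sat}_{\mathcal B}(\mathcal K)$. Then some finite $b\in\mathcal B$ with $b\subseteq H$ has $b\notin\mathcal K$. The basic open set $\{H'\in[M]^\omega:b\subseteq H'\}$ contains $H$, and every member of it has $b\in\mathcal B\upharpoonright H'$, so it misses $\mathrm{Sat}_{\mathcal B}(\mathcal K)$. The complement is therefore open.

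This argument uses only whether the finite sets $b\in\mathcal B$ belong to $\mathcal K$. That gives the claim that the saturation depends only on $\mathcal K\cap[M]^{<\omega}$, and no closure or definability hypothesis on $\mathcal K$ enters anywhere.

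The main obstacle is interpretive rather than technical: pinning down $\mathcal B\upharpoonright H=\{b\in\mathcal B:b\subseteq H\}$ and using that every infinite set has an initial segment in $\mathcal B$. I will fix that reading at the start of the proof.
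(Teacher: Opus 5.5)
Your proposal is correct and follows the paper's proof essentially step for step. You use tallness, heredity and the barrier's initial-segment property to exclude colour $0$, then give the same basic-open-set argument for closedness and the same one-line checks for heredity and tallness of $\mathrm{Sat}_{\mathcal B}(\mathcal K)$.
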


\begin{proof}
An infinite $H$ is homogeneous for $c_{\mathcal K}$ with colour $1$ exactly when
$\mathcal B\upharpoonright H\subseteq\mathcal K$, i.e.\
$H\in\mathrm{Sat}_{\mathcal B}(\mathcal K)$. There is no infinite homogeneous $H$ of
colour $0$: tallness of $\mathcal K$ yields $Y\in\mathcal K\cap[H]^\omega$, heredity
gives $\mathcal B\upharpoonright Y\subseteq\mathcal K$, and
$\mathcal B\upharpoonright Y$ is a nonempty subset of $\mathcal B\upharpoonright H$
meeting colour $1$. This proves the displayed equality, and only membership of the
finite sets of $\mathcal B$ in $\mathcal K$ was used. Closedness is direct:
\[
 [M]^\omega\setminus\mathrm{Sat}_{\mathcal B}(\mathcal K)
 =\bigcup\bigl\{\{H\in[M]^\omega:b\subseteq H\}:b\in\mathcal B\setminus\mathcal K\bigr\}
\]
is open. Heredity holds because $L\subseteq H$ implies
$\mathcal B\upharpoonright L\subseteq\mathcal B\upharpoonright H$; and tallness holds
because, inside any infinite $H\subseteq M$, the set $Y$ above lies in
$\mathrm{Sat}_{\mathcal B}(\mathcal K)$.
\end{proof}

\begin{corollary}[obstruction criterion]\label{cor:sat-obstruction}
If there are $X\in\Ipos$, a barrier $\mathcal B$ and hereditary tall $\mathcal K\subseteq\I$
with $\{H\subseteq X:\mathcal B\upharpoonright H\subseteq\mathcal K\}\subseteq\I$, then
$\I\upharpoonright X$ is not $\mathcal B$-Ramsey.
\end{corollary}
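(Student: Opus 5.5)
This is the contrapositive of Lemma~\ref{lem:saturation}, applied to the restricted ideal $\I\upharpoonright X$ and localized to $X$. We argue by contradiction: suppose $\I\upharpoonright X$ is $\mathcal B$-Ramsey.

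First, transport the hypotheses to the restricted setting. The positive sets of $\I\upharpoonright X$ are exactly the sets meeting $X$ in an $\I$-positive set. Thus, when testing $\mathcal B$-Ramseyness, we may restrict attention to positive sets $Z\subseteq X$, i.e.\ $Z\in\Ipos$ with $Z\subseteq X$. The family $\mathcal K$ is hereditary and tall, and $\mathcal K\subseteq\I\subseteq\I\upharpoonright X$. So $\mathcal K$ meets the hypotheses of Lemma~\ref{lem:saturation} relative to the ideal $\I\upharpoonright X$.

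Next, run the proof of Lemma~\ref{lem:saturation} directly at $X$. Colour $\mathcal B\upharpoonright X$ by the membership colouring $c_{\mathcal K}$. The $\mathcal B$-Ramsey property of $\I\upharpoonright X$, applied to the positive set $X$ itself, yields $H\subseteq X$ that is $(\I\upharpoonright X)$-positive, hence $H\in\Ipos$ since $H\subseteq X$, and on which $c_{\mathcal K}$ is constant on $\mathcal B\upharpoonright H$. By Proposition~\ref{prop:exact-sat} there are no infinite homogeneous sets of colour $0$. Hence the colour is $1$, that is, $\mathcal B\upharpoonright H\subseteq\mathcal K$. Therefore $H$ lies in $\{H\subseteq X:\mathcal B\upharpoonright H\subseteq\mathcal K\}\subseteq\I$, contradicting $H\in\Ipos$.

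The only point needing care is the bookkeeping of domains. The barrier $\mathcal B$ lives on some $M$, so $\mathcal B\upharpoonright X$ is meaningful as a colouring domain; this presupposes $X\subseteq M$, or else one works with $X\cap M$, which is still positive as long as $M$ is cofinite or the convention of \cite{CDU2026} is used. One must also check that positivity for $\I\upharpoonright X$ of a subset of $X$ coincides with $\I$-positivity. Neither step is a real obstacle, and the argument is essentially one line once Proposition~\ref{prop:exact-sat} is invoked.
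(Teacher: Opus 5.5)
Your proof is correct and is essentially the intended argument: the corollary is the contrapositive of Lemma~\ref{lem:saturation} for $\I\upharpoonright X$. You colour $\mathcal B\upharpoonright X$ by $c_{\mathcal K}$, exclude colour $0$ by Proposition~\ref{prop:exact-sat}, and observe that the resulting positive homogeneous $H\subseteq X$ lies in a family assumed to be contained in $\I$.

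Your domain caveat can simply be dropped, since $\mathcal B\upharpoonright H=\mathcal B\upharpoonright(H\cap M)$. If $H\cap M$ is infinite, Proposition~\ref{prop:exact-sat} applies to $H\cap M$. If $H\cap M$ is finite, then $H\setminus M$ is a positive subset of $X$ with $\mathcal B\upharpoonright(H\setminus M)\subseteq\mathcal K$ trivially, again contradicting the hypothesis.
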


By Proposition~\ref{prop:exact-sat}, the criterion of
Corollary~\ref{cor:sat-obstruction} is the special case $c=c_{\mathcal K}$ of the general
one: \emph{any} finite colouring $c$ of a barrier with
$\mathrm{hom}(c)\subseteq\I$ on a positive restriction witnesses failure of
$\mathcal B$-Ramseyness. We call the production of such a colouring from a hypothetical
candidate \emph{frontification}. The next obstruction shows frontification cannot be had
for free.

\begin{proposition}[selector obstruction]\label{prop:selector}
If $\mathrm{hom}(c)\subseteq\I$ for some finite colouring $c$ of a barrier, then $\I$ has a
Borel selector: the section at $c$ of the uniform Nash--Williams selector of
Greb\'ik--Uzc\'ategui (\cite{GrebikUzcategui2019}; cf.\ \cite[Lem.~4.28]{CDU2026}) is a
Borel map for \emph{every} parameter $c$, definable or not. Since there is a tall
$F_\sigma$ ideal with no Borel selector \cite{GrebikUzcategui2019}, frontification is
false for arbitrary Borel tall ideals: for a Nash--Williams candidate it must be derived
from the Ramsey-side necessary conditions (weak selectivity, $q^+$,
Theorem~\ref{thm:standard}), not from Borelness and tallness alone.
\end{proposition}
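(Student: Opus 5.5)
The proof is a direct composition: the hypothesis $\mathrm{hom}(c)\subseteq\I$ turns any procedure producing $c$-homogeneous sets into a procedure producing $\I$-small sets. Throughout, a \emph{Borel selector} for $\I$ means a Borel map $\Phi:[\omega]^\omega\to[\omega]^\omega$ with $\Phi(A)\subseteq A$ and $\Phi(A)\in\I$ for every infinite $A$. This is the notion of \cite{GrebikUzcategui2019}, and for tall ideals it is the definable form of tallness.

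\textbf{Step 1: the uniform Nash--Williams selector.} I would invoke \cite{GrebikUzcategui2019} (cf.\ \cite[Lem.~4.28]{CDU2026}) in its uniform form. For a fixed barrier $\mathcal B$ and a fixed number of colours $k$, there is a Borel map
\[
\Psi:\ k^{\mathcal B}\times[\omega]^\omega\to[\omega]^\omega
\]
such that $\Psi(c,A)$ is an infinite subset of $A$ on which $c\upharpoonright(\mathcal B\upharpoonright\Psi(c,A))$ is constant. Here $\Psi(c,A)$ is homogeneous for $c$ restricted to $\mathcal B\upharpoonright A$.

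\textbf{Step 2: take the section at $c$.} Fix the given colouring $c$ and set $\Phi(A)=\Psi(c,A)$. Any section of a Borel map on a product of Polish spaces at a point is Borel, since it is the composite with the continuous map $A\mapsto(c,A)$. This is why no definability of $c$ is needed. By construction $\Phi(A)\subseteq A$, and $\Phi(A)\in\mathrm{hom}(c)\subseteq\I$. Hence $\Phi$ is a Borel selector for $\I$.

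\textbf{Step 3: the consequence.} Take the tall $F_\sigma$ ideal of \cite{GrebikUzcategui2019} that has no Borel selector. By Steps 1--2 it admits no finite barrier colouring $c$ with $\mathrm{hom}(c)\subseteq\I$. So frontification cannot follow from Borelness plus tallness. For a Nash--Williams candidate it must instead use the extra structure of Theorem~\ref{thm:standard}, since weak selectivity and $q^+$ fail for that $F_\sigma$ ideal, consistently with Hru\v{s}\'ak's theorem.

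\textbf{Main obstacle.} The only nontrivial point is Step 1: that the homogeneous set can be chosen in a \emph{Borel} way, jointly in $(c,A)$. The naive Galvin--Prikry or Nash--Williams argument makes accept/reject decisions that quantify over infinite sets, which is a priori analytic or coanalytic. If the cited lemma is not available in exactly this uniform form, I would prove it by induction on the rank of the barrier. For $\mathcal B=[\omega]^1$, take the least colour with infinitely many points in $A$. At successor and limit ranks, diagonalize: use the uniform selectors for the finitely many lower-rank barriers $\mathcal B_{\{n\}}$ below each chosen point, together with the finite pigeonhole on the resulting colours. Each stage is then a countable recursion of Borel operations, so the whole construction stays Borel.
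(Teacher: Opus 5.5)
Your proposal is correct and matches the paper's proof: fix the barrier, take the section at the arbitrary parameter $c$ of the Borel uniform Nash--Williams selector of Greb\'ik--Uzc\'ategui, and use $\mathrm{hom}(c)\subseteq\I$ to get a Borel selector, with the cited tall $F_\sigma$ ideal doing the rest. The paper obtains $k$ colours by iterating the cited two-colour map, whereas you assume the $k$-colour form or rederive it by rank induction; also, in Step~3 the accurate claim is that a tall $F_\sigma$ ideal cannot have weak selectivity and $q^+$ \emph{both} (by Hru\v{s}\'ak's local $\EDfin\leK\I\upharpoonright X$ together with Theorem~\ref{thm:carrier}), not that each one fails.
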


\begin{proof}
The uniform Nash--Williams theorem of \cite{GrebikUzcategui2019} provides, for a fixed
barrier $\mathcal B$, a Borel map $(A,c)\mapsto S(A,c)$ assigning to every
$A\in[\omega]^\omega$ and every finite colouring $c$ of $\mathcal B$ an infinite
$S(A,c)\subseteq A$ homogeneous for $c$ (stated there for two colours, coded as elements
of $2^{\mathcal B}$; finitely many colours follow by iterating the two-colour map, which
preserves Borelness). Fixing the second argument at our $c$ gives a
Borel map $A\mapsto S(A,c)$: the section of a Borel map at a fixed parameter is Borel,
with no definability requirement on the parameter itself. If
$\mathrm{hom}(c)\subseteq\I$, this section picks inside every infinite $A$ an infinite
subset lying in $\I$ --- a Borel selector for $\I$. In particular, if
$\mathrm{Sat}_{\mathcal B}(\mathcal K)\subseteq\I$ for some hereditary tall
$\mathcal K$, the case $c=c_{\mathcal K}$ applies by
Proposition~\ref{prop:exact-sat}. Since \cite{GrebikUzcategui2019} construct a tall
$F_\sigma$ ideal with no Borel selector, that ideal contains no family of the form
$\mathrm{hom}(c)$; the remaining assertions follow.
\end{proof}

For the Schreier barrier $\mathscr S=\{s:|s|=\min(s)+1\}$, a single \emph{closed} hereditary
tall $\mathcal K\subseteq\I$ yields the coherent, increasing-dimension colouring
\[
 c^{\mathcal K}_n:[\omega\setminus(n+1)]^n\to2,\qquad
 c^{\mathcal K}_n(t)=[\,\{n\}\cup t\in\mathcal K\,],
\]
whose sections come from one closed object and are coherent by heredity. Unlike a min-,
membership-, or interval-type colouring, $c^{\mathcal K}$ \emph{sees} $\I$ through
$\mathcal K$ (Remark~\ref{rem:see-ideal}). By Corollary~\ref{cor:sat-obstruction}, ``no tall
Borel Nash--Williams ideal'' would follow from an affirmative answer to
Question~\ref{q:schreier} of \S\ref{sec:targets}.

\begin{lemma}[coordination across ranks]\label{lem:coord}
Let $\I$ be $\mathcal B$-Ramsey for a uniform barrier $\mathcal B$, let
$\mathcal C_1,\dots,\mathcal C_m$ be barriers with
$\mathrm{rk}(\mathcal C_i)<\mathrm{rk}(\mathcal B)$, and let $\mathcal K\subseteq\I$ be
hereditary and tall. For every $M\in\Ipos$ there is $H\in\Ipos$, $H\subseteq M$, with
$\mathcal B\upharpoonright H\subseteq\mathcal K$ and, for each $i\le m$,
$\mathcal C_i\upharpoonright H\sqsubseteq\mathcal B\upharpoonright H$ and
$\mathcal C_i\upharpoonright H\subseteq\mathcal K$. In particular a Nash--Williams
candidate saturates $\mathcal K$ coherently on every \emph{finite} window of ranks: for
any $n_1<\dots<n_m$ there is one positive $H$ with $[H]^{n_i}\subseteq\mathcal K$ for all
$i\le m$.
\end{lemma}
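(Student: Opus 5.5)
The plan is to run the saturation argument of Lemma~\ref{lem:saturation} with a single finite colouring of $\mathcal B\upharpoonright M$ that records membership in $\mathcal K$ and also, for each $i\le m$, how $\mathcal B$ sits relative to $\mathcal C_i$. Concretely, for $s\in\mathcal B\upharpoonright M$ put
\[
 c(s)=\bigl([\,s\in\mathcal K\,],\ d_1(s),\dots,d_m(s)\bigr)\in 2^{m+1},
 \qquad d_i(s)=[\,\text{some initial segment of }s\text{ lies in }\mathcal C_i\,].
\]
Since $\I$ is $\mathcal B$-Ramsey, which is for finite colourings, there is a positive $H\subseteq M$ on which $c$ is constant. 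The first coordinate behaves exactly as in Lemma~\ref{lem:saturation}. Tallness of $\mathcal K$ gives an infinite $Y\subseteq H$ with $Y\in\mathcal K$. Heredity puts the nonempty family $\mathcal B\upharpoonright Y$ inside $\mathcal K$. Hence the constant first coordinate is $1$, that is, $\mathcal B\upharpoonright H\subseteq\mathcal K$.

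The main step, and the one I expect to be the obstacle, is showing that each $d_i$ is constantly $1$ on $\mathcal B\upharpoonright H$. Fix an infinite $N\subseteq H$, and let $s\in\mathcal B$ and $t\in\mathcal C_i$ be its unique initial segments in each barrier. These segments are comparable under $\sqsubseteq$. If $d_i\equiv0$ on $H$, then $s\sqsubsetneq t$ for every infinite $N\subseteq H$; that is, $\mathcal B\upharpoonright H$ properly precedes $\mathcal C_i\upharpoonright H$ in the sense of Nash--Williams. I would then invoke the standard rank comparison for barriers (as used in \cite{CDU2026}, going back to Nash--Williams and Pudl\'ak--R\"odl). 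If every member of $\mathcal B\upharpoonright H$ is a proper initial segment of a member of $\mathcal C_i\upharpoonright H$, then $\mathrm{rk}(\mathcal C_i\upharpoonright H)>\mathrm{rk}(\mathcal B\upharpoonright H)$. Uniformity of $\mathcal B$ gives $\mathrm{rk}(\mathcal B\upharpoonright H)=\mathrm{rk}(\mathcal B)$, and restriction does not raise rank, so $\mathrm{rk}(\mathcal C_i\upharpoonright H)\le\mathrm{rk}(\mathcal C_i)<\mathrm{rk}(\mathcal B)$. This is a contradiction. The care needed is entirely in quoting the rank facts in the right form: invariance of rank under restriction for uniform barriers, and monotonicity of rank under $\sqsubseteq$-domination. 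If the quoted version is only available on some infinite subset, I would first apply it to get the contradiction on an infinite subset of $H$; homogeneity of $H$ makes this enough.

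With $d_i\equiv1$, fix $t\in\mathcal C_i\upharpoonright H$ and extend it by elements of $H$ to an infinite $N\subseteq H$ with $t\sqsubseteq N$. Let $s$ be the $\mathcal B$-initial segment of $N$. By homogeneity, some initial segment of $s$ lies in $\mathcal C_i$. It is an initial segment of $N$, so thinness of $\mathcal C_i$ forces it to be $t$. Hence $t\sqsubseteq s\in\mathcal B\upharpoonright H$, which is the claim $\mathcal C_i\upharpoonright H\sqsubseteq\mathcal B\upharpoonright H$. Since $s\in\mathcal K$ and $\mathcal K$ is hereditary, $t\in\mathcal K$ as well.

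For the finite-window clause, a Nash--Williams ideal is $\mathcal B$-Ramsey for every barrier. I take $\mathcal B=[\omega]^{n_m}$, uniform of rank $n_m$, and $\mathcal C_i=[\omega]^{n_i}$ for $i<m$. The main clause then gives a positive $H$ with $[H]^{n_m}\subseteq\mathcal K$ and $[H]^{n_i}\subseteq\mathcal K$. In this case heredity alone already yields the lower dimensions, because every $n_i$-subset of $H$ is contained in an $n_m$-subset of $H$.
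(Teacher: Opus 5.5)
Your proof is correct and follows the paper's argument: you use the same $2^{m+1}$-valued colouring, tallness plus heredity for the first coordinate, the rank comparison of \cite[Lem.~4.33]{CDU2026} for the $\mathcal C_i$-coordinates, and a direct thinness argument that replaces the cited \cite[Fact~4.31(2)]{CDU2026}. One correction: the comparison you need is only the weak inequality $\mathrm{rk}(\mathcal C_i\upharpoonright H)\ge\mathrm{rk}(\mathcal B\upharpoonright H)$, since the tree of initial segments of $\mathcal B\upharpoonright H$ sits inside that of $\mathcal C_i\upharpoonright H$; the strict version you state is false in general (every member of $\mathscr S$ is a proper initial segment of a member of the barrier $\{s:|s|=\min(s)+2\}$, yet both have rank $\omega$), but the weak form already contradicts $\mathrm{rk}(\mathcal C_i)<\mathrm{rk}(\mathcal B)$.
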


\begin{proof}
Colour $b\in\mathcal B\upharpoonright M$ by the vector
\[
 c(b)=\bigl([\,b\in\mathcal K\,],\
 [\,\exists s\in\mathcal C_1\ s\sqsubseteq b\,],\ \dots,\
 [\,\exists s\in\mathcal C_m\ s\sqsubseteq b\,]\bigr)\in2^{m+1}.
\]
The $\mathcal B$-Ramsey property holds for any finite number of colours (see the remarks
following \cite[Def.~4.25]{CDU2026}), so there is $H\in\Ipos$, $H\subseteq M$,
homogeneous for $c$. In the first coordinate the colour $0$ is impossible on any
infinite set by Proposition~\ref{prop:exact-sat}; hence
$\mathcal B\upharpoonright H\subseteq\mathcal K$. In coordinate $i+1$ the colour $0$ is
impossible by the rank-comparison argument of \cite[Lem.~4.33]{CDU2026} applied to
$\mathcal B\upharpoonright H$ and $\mathcal C_i\upharpoonright H$ (the restriction of a
uniform barrier to an infinite subset of its domain is uniform of the same rank). So the
colour of $H$ is $(1,1,\dots,1)$, which is
$\mathcal B\upharpoonright H\subseteq\mathcal K$ together with
$\mathcal C_i\upharpoonright H\sqsubseteq\mathcal B\upharpoonright H$ for each $i\le m$.
For the containments $\mathcal C_i\upharpoonright H\subseteq\mathcal K$: given
$s\in\mathcal C_i\upharpoonright H$, by \cite[Fact~4.31(2)]{CDU2026} there is
$b\in\mathcal B\upharpoonright H$ with $s\sqsubseteq b$; then
$s\subseteq b\in\mathcal K$ and heredity of $\mathcal K$ gives $s\in\mathcal K$.
The last sentence of the statement is the case $\mathcal C_i=[\omega]^{n_i}$ inside a
uniform barrier $\mathcal B$ of rank above $n_m$ (for a Nash--Williams candidate, e.g.\
$\mathcal B=[\omega]^{m'}$ with $m'>n_m$).
\end{proof}

\begin{remark}[the finite-to-$\omega$ jump is the same wall]\label{rem:window}
For closed $\mathcal K$ (e.g.\ the Greb\'ik--Vidny\'anszky--Mazur witness) the
$\omega$-window is outright contradictory: one positive $H$ with
$[H]^n\subseteq\mathcal K$ for \emph{all} $n$ would give $H\in\mathcal K\subseteq\I$. So a
Nash--Williams candidate coherently saturates every finite window of ranks while provably
never saturating the $\omega$-window; the missing step is a fusion/properness step, i.e.\
precisely the local-properness principle of \S\ref{sec:core}. The same pattern defeats
every barrier colouring assembled from $\sigma$-small data (tower exit functions,
partitions into small pieces, submeasure masses): the candidate's necessary conditions
manufacture a positive homogeneous escape --- either a forbidden positive
transversal/pseudointersection, forcing the data to have a positive piece, or a
homogeneous set inside that piece, where the colouring degenerates.
\end{remark}

\begin{proposition}[Nash--Williams forces unbounded-rank positive barriers inside the small
witness]\label{prop:nw-unbounded}
If $\I$ is Nash--Williams and $\mathcal K\subseteq\I$ is hereditary tall, then
\[
 \mathfrak B^+_{\I,\mathcal K}=\{\mathcal C:\mathcal C\text{ a barrier on }
 \textstyle\bigcup\mathcal C\in\Ipos,\ \mathcal C\subseteq\mathcal K\}
\]
is unbounded in rank.
\end{proposition}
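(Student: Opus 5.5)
The plan is to produce, for each countable ordinal $\alpha$, a member of $\mathfrak B^+_{\I,\mathcal K}$ of rank at least $\alpha$. The construction is the saturation Lemma~\ref{lem:saturation} applied to uniform barriers of arbitrarily high rank.

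First, fix $\alpha<\omega_1$ and a uniform barrier $\mathcal B_\alpha$ on $\omega$ of rank $\alpha$; such barriers exist for every countable rank, as in \cite{CDU2026}. Since $\I$ is Nash--Williams, it is $\mathcal B_\alpha$-Ramsey.

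Second, apply Lemma~\ref{lem:saturation} with $X=\omega$. This gives a positive $H$ with $\mathcal B_\alpha\upharpoonright H\subseteq\mathcal K$. Put $\mathcal C=\mathcal B_\alpha\upharpoonright H$. This is a barrier on $H$, because the restriction of a barrier to an infinite subset of its domain is a barrier on that subset. Every $s\in\mathcal C$ lies in $\mathcal K$.

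Third, check that $\bigcup\mathcal C=H$, which then lies in $\Ipos$. Given $x\in H$, apply the barrier property to the infinite set $\{x\}\cup(H\setminus(x+1))$: some initial segment of it lies in $\mathcal C$. That segment is nonempty and its minimum is $x$, so $x\in\bigcup\mathcal C$. Hence $\mathcal C\in\mathfrak B^+_{\I,\mathcal K}$.

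Fourth, compute the rank. As already used in the proof of Lemma~\ref{lem:coord}, the restriction of a uniform barrier to an infinite subset of its domain is uniform of the same rank. Therefore $\mathrm{rk}(\mathcal C)=\alpha$.

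Since $\alpha<\omega_1$ was arbitrary, the ranks of members of $\mathfrak B^+_{\I,\mathcal K}$ are cofinal in $\omega_1$, which is the unboundedness claimed. The only delicate point is the third and fourth steps, which depend on the convention that rank is computed relative to the domain $\bigcup\mathcal C$. If uniformity is stated only for restrictions to $\omega$-copies, I would transport along the increasing enumeration of $H$, which preserves both uniformity and rank.
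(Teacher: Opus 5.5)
Your proof is correct and follows the paper's argument step for step. You fix a uniform barrier $\mathcal B_\alpha$ of rank $\alpha$, get $\mathcal B_\alpha$-Ramseyness from Nash--Williams, apply Lemma~\ref{lem:saturation} to obtain a positive $H$ with $\mathcal B_\alpha\upharpoonright H\subseteq\mathcal K$, and use that restricting to $H$ preserves uniformity and rank while $\bigcup(\mathcal B_\alpha\upharpoonright H)=H$. Your explicit check of that last equality is a welcome addition, since the paper only asserts it; just take $\alpha\ge1$ (the paper uses $\alpha\ge2$), so that the barrier is not $\{\emptyset\}$ and the initial segment you find is nonempty.
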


\begin{proof}
Let $2\le\alpha<\omega_1$ and fix a uniform barrier $\mathcal B_\alpha$ of rank $\alpha$
on $\omega$. Since $\I$ is Nash--Williams, it is $\mathcal B_\alpha$-Ramsey, and
Lemma~\ref{lem:saturation} gives $H_\alpha\in\Ipos$ with
$\mathcal C_\alpha:=\mathcal B_\alpha\upharpoonright H_\alpha\subseteq\mathcal K$. The
restriction of a uniform barrier to an infinite subset of its domain is a uniform
barrier of the same rank, so $\mathrm{rk}(\mathcal C_\alpha)=\alpha$; and every point of
$H_\alpha$ lies in some member of $\mathcal C_\alpha$, so
$\bigcup\mathcal C_\alpha=H_\alpha\in\Ipos$. Hence
$\mathcal C_\alpha\in\mathfrak B^+_{\I,\mathcal K}$ for every $\alpha$, and the family
is unbounded in rank.
\end{proof}

\begin{remark}[why the boundedness route is delicate]\label{rem:boundedness-wall}
The family $\mathfrak B^+_{\I,\mathcal K}$ is coanalytic, since ``$\mathcal C$ is a barrier''
is $\Pi^1_1$-complete \cite[Prop.~4.45]{CDU2026}, independently of $\I$; and $\mathrm{rk}$ is a
coanalytic rank on fronts \cite[Prop.~4.46]{CDU2026}. By $\Sigma^1_1$-boundedness, every
\emph{analytic} family of fronts is bounded in rank; hence
Proposition~\ref{prop:nw-unbounded} admits \emph{no} unbounded analytic subfamily, and the
only route through boundedness is to prove $\mathfrak B^+_{\I,\mathcal K}$ \emph{itself}
analytic --- a $\Pi^1_1\to\Sigma^1_1$ collapse of the barrier predicate for which the
Borelness of $\I$ supplies no evident mechanism. This is the same coanalytic-versus-Borel
wall as in Sections~\ref{sec:core} and~\ref{sec:targets}. Consequently the viable outcome
of this section is the single-colouring Question~\ref{q:schreier}, which avoids ranks and
$\omega_1$ altogether.
\end{remark}

\subsection{Obstructions must see the ideal}\label{ssec:see-ideal}

\begin{proposition}[iterated-Ramsey diagonal]\label{prop:diagonal}
Let $c_k:[\omega]^{n_k}\to r_k$ \textup($k\in\omega$, $n_k,r_k<\omega$\textup) be finite
colourings of finite dimensions. Every infinite $C\subseteq\omega$ has an infinite
$D\subseteq C$ almost homogeneous for every $c_k$ simultaneously \textup(homogeneous
after a finite cut\textup). If $C$ is a sequence converging in a metric space, then so is
$D$, with the same limit.
\end{proposition}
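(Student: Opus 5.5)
The plan is a standard diagonal fusion using the finite-dimensional infinite Ramsey theorem. I will build a decreasing sequence $C=D_{-1}\supseteq D_0\supseteq D_1\supseteq\cdots$ of infinite sets together with increasing points $d_0<d_1<\cdots$, and then take $D=\{d_k:k\in\omega\}$.

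At stage $k$, apply Ramsey's theorem $\omega\to(\omega)^{n_k}_{r_k}$ to the colouring $c_k\upharpoonright[D_{k-1}\setminus(d_{k-1}+1)]^{n_k}$. This gives an infinite $D_k\subseteq D_{k-1}$, with all elements above $d_{k-1}$, on which $c_k$ is constant; when $n_k=0$ or $r_k\le1$ this step is trivial. Then set $d_k=\min D_k$.

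To verify almost homogeneity, fix $k$. For $j\ge k$ we have $d_j\in D_j\subseteq D_k$, so $D\setminus\{d_0,\dots,d_{k-1}\}\subseteq D_k$. Hence $c_k$ is constant on $[D\setminus\{d_0,\dots,d_{k-1}\}]^{n_k}$, which is homogeneity after a finite cut. The points $d_k$ are strictly increasing, so $D$ is infinite and $D\subseteq C$.

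For the convergence clause, interpret ``$C$ converges to $z$'' as saying that every enumeration of $C$ in increasing order (equivalently, of $C$ as an injective sequence) tends to $z$. Every infinite subset of such a $C$, enumerated increasingly, is a subsequence of the increasing enumeration of $C$, and therefore converges to the same limit $z$. This applies directly to $D$.

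I expect no real obstacle. The only point needing care is the bookkeeping that makes each $c_k$ homogeneous on a tail of $D$ rather than on all of $D$: the first $k$ points were chosen before $c_k$ was handled. This is exactly why the conclusion reads ``almost homogeneous'' and not ``homogeneous''.
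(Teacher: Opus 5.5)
Your proposal is correct and is essentially the paper's argument. The paper also builds a decreasing sequence of infinite sets by Ramsey's theorem, picks one point from each to get $D$, and observes that a subsequence of a convergent sequence has the same limit. The only difference is cosmetic: you take $d_k=\min D_k$ with $D_k$ above $d_{k-1}$, so your cut has size $k$, whereas the paper's has size $k+1$.
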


\begin{proof}
Recursively put $A_0=C$ and let $A_{k+1}\subseteq A_k$ be infinite and
$c_k$-homogeneous, by Ramsey's theorem. Choose $d_0<d_1<\cdots$ with $d_k\in A_k$ and set
$D=\{d_k:k\in\omega\}$. For each $k$, $D\setminus\{d_0,\dots,d_k\}\subseteq A_{k+1}$, so
$c_k$ is constant on $[D\setminus\{d_0,\dots,d_k\}]^{n_k}$. A subsequence of a convergent
sequence converges to the same limit.
\end{proof}

\begin{remark}\label{rem:see-ideal}
A colouring assembled from a fixed countable amount of ideal-free data (an enumeration, a
fixed geometric function) cannot witness failure of the Ramsey property: every
finite-dimensional colouring is $\sigma$-decided \cite[Thm~3.13]{CDU2026}, and every
countable ideal-free family is defeated by the iterated-Ramsey diagonal of
Proposition~\ref{prop:diagonal} --- inside every infinite set, in particular inside every
positive set, there are infinite sets almost homogeneous for the whole family, and
nothing built without reference to $\I$ forces these diagonal sets into $\I$. Any genuine
obstruction must depend on $\I$ itself. The saturation colouring $c^{\mathcal K}$ does so
through $\mathcal K\subseteq\I$; this is precisely why it escapes the filter that kills
min-, membership-, and interval-type colourings.
\end{remark}

\begin{remark}[canonical rigidity of interval separation is ideal-free]\label{rem:route3}
As an illustration of Remark~\ref{rem:see-ideal}: in the reduced presentation, for a positive
convergent sequence $c_n\downarrow z$ the interval-separation function
$f(\{x,y\})=\min\{n:q_n\in(x,y)\}$ has, on a positive subset, an Erd\H os--Rado canonical form
\cite[Thm~3.30]{CDU2026} that is min-determined or injective, \emph{never} constant or
max-determined (the constant and max-determined types force, respectively, disjoint intervals
to share a rational and a strictly decreasing $\omega$-sequence). This rigidity holds for any
monotone convergent sequence in $\Q$ with no reference to $\I$, so by
Remark~\ref{rem:see-ideal} it cannot obstruct the Ramsey property.
\end{remark}

\section{Width and continuous reading of names}\label{sec:core}

This section proves face~(c) of Theorem~A and isolates the reduction principles (DW) and
(RC). Throughout, $\I$ is a tall Borel ideal with the Ramsey
property unless stated otherwise, $a$ is a fixed Borel code for $\I$, and $\QI$ is the
quotient forcing of \S\ref{sec:prelim}. The situation is the residual case left open by
\cite[Thm~3.18]{HMTU2017}: by (F3) below, $\QI$ adds a real and is not proper, while by
\cite[Thm~3.13]{HMTU2017} it is $p^+_{\mathrm{tower}}$ and $(\omega,2)$-distributive ---
it decides ground-model countable families modulo $\I$, which is weaker than adding no
real --- yet not $\sigma$-distributive. In the proof of \cite[Lem.~3.17]{HMTU2017},
properness is used exactly once, to make the maximal antichains deciding a new real
countable on a positive set. This section deletes that use and isolates what remains. We
use the following toolkit from \cite{HMTU2017}.

\begin{lemma}[toolkit; \cite{HMTU2017}]\label{lem:toolkit}
Let $\I$ be a tall analytic ideal.
\begin{itemize}
\item[\textup{(F1)}] $\R\leK\conv$, and $\conv\leK\I\upharpoonright X$ for some $X\in\Ipos$
implies $\R\leK\I\upharpoonright X$, hence $\I\upharpoonright X$ is not Ramsey
(\cite[Lem.~4.3, Cor.~4.2]{HMTU2017}).
\item[\textup{(F2)}] (\emph{Coherence}, \cite[Lem.~3.16]{HMTU2017}.) If $(\mathcal P_n)_{n}$
is a refining $\omega$-sequence of $\I\upharpoonright X$-decompositions (partitions of $X$
into positive pieces detecting $\I$ coordinatewise) and every pseudo-intersection of every
decreasing branch $P_0\supseteq P_1\supseteq\cdots$ ($P_n\in\mathcal P_n$) lies in $\I$, then
$\conv\leK\I\upharpoonright X$.
\item[\textup{(F3)}] (\cite[Thm~4.4]{HMTU2017}.) If the quotient of $\I$ adds no real, or
is proper and adds a real, then $\I$ is locally above $\R$. Hence a tall Borel \emph{Ramsey}
$\I$ has a quotient $\QI$ that \emph{adds a real} and is \emph{not proper}.
\item[\textup{(F4)}] (\cite[Lem.~3.17]{HMTU2017}, proof.) If $\QI$ is proper and adds a real
then $\conv\leK\I\upharpoonright X$ for some $X$. The proof fixes a name $\dot r$ for a new
real and refining maximal antichains $(\mathcal A_n)_n$ with $A\Vdash\dot r\upharpoonright
n=\sigma_A$ for $A\in\mathcal A_n$; \emph{properness is used only once}, to produce
$X\in\Ipos$ making each $B_n(X)=\{A\in\mathcal A_n: A\cap X\in\Ipos\}$ countable; the
$B_n(X)$ are then refined to decompositions and \textup{(F2)} applies.
\end{itemize}
\end{lemma}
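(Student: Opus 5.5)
Since every item of Lemma~\ref{lem:toolkit} is quoted from \cite{HMTU2017}, and the paper's convention is never to reprove quoted results, the plan is to check the small glue connecting the quoted statements to the form used here. For \textup{(F1)}, take $\R\leK\conv$ from \cite[Lem.~4.3]{HMTU2017}. Katětov reducibility is transitive: if $g$ witnesses $\R\leK\conv$ and $f$ witnesses $\conv\leK\I\upharpoonright X$, then $g\circ f$ witnesses $\R\leK\I\upharpoonright X$, since $(g\circ f)^{-1}[A]=f^{-1}[g^{-1}[A]]$. Theorem~\ref{thm:kat} applied to the positive set $X$ then shows that $\I\upharpoonright X$ is not Ramsey. \textup{(F2)} is \cite[Lem.~3.16]{HMTU2017} verbatim. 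The only thing to check there is that our phrasing of ``$\I\upharpoonright X$-decomposition'' matches theirs: countable partitions of $X$ into positive pieces such that a set meeting every piece in a small set is small.

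For \textup{(F3)}, cite \cite[Thm~4.4]{HMTU2017} for the dichotomy: if $\QI$ adds no real, or is proper and adds a real, then there is $X\in\Ipos$ with $\R\leK\I\upharpoonright X$. The ``Hence'' clause is then a contrapositive. A tall Borel Ramsey $\I$ is in particular tall analytic, so it falls under the theorem. Its Ramsey property forbids $\R\leK\I\upharpoonright X$ for every positive $X$ by Theorem~\ref{thm:kat}. So neither alternative of the dichotomy can hold, which means $\QI$ adds a real and $\QI$ is not proper. The same argument applies below any positive condition, because by Theorem~\ref{thm:kat} the hypotheses pass to positive restrictions.

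For \textup{(F4)}, the statement is the conclusion of \cite[Lem.~3.17]{HMTU2017}; the added content is the bookkeeping claim that properness is used only once. I would verify this by walking through their proof in order.
\begin{enumerate}
\item Fix a name $\dot r$ for a new real, and refining maximal antichains $\mathcal A_n$ deciding $\dot r\upharpoonright n$. This needs only that $\QI$ is a forcing notion.
\item Use properness, via a countable elementary submodel and a generic condition, to obtain $X\in\Ipos$ such that each $B_n(X)$ is countable. This is the single use of properness.
\item Refine the countable families $B_n(X)$, traced on $X$, into decompositions of $X$ (possibly after shrinking $X$). Because $\dot r$ is forced to be new, every branch has only small pseudo-intersections. \textup{(F2)} then yields $\conv\leK\I\upharpoonright X$.
\end{enumerate}

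The main obstacle is this audit in step~3. One has to confirm that passing from countable antichains on $X$ to genuine decompositions, and the ``new real'' argument that kills branch pseudo-intersections, use only countability of the $B_n(X)$ together with the ambient $(\omega,2)$-distributivity and $p^+_{\mathrm{tower}}$ properties, and never invoke properness again. I would check this line by line against the source and not claim more than the source supports.
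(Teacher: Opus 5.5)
Your proposal is correct and follows the paper, which also quotes all four items from \cite{HMTU2017} and supplies only the glue: transitivity of $\leK$ together with Theorem~\ref{thm:kat} for \textup{(F1)}, and the contrapositive for \textup{(F3)}. The audit you defer in step~3 of \textup{(F4)} is effectively carried out in the paper's proof of Theorem~\ref{thm:reduction}. It needs only three things: countability of the $B_n(X)$; maximality of the $\mathcal A_n$, which makes the leftover $X\setminus\bigcup B_n(X)$ $\I$-small and ensures every positive subset of $X$ meets some piece positively; and newness of $\dot r$. It does not need $(\omega,2)$-distributivity or $p^+_{\mathrm{tower}}$, and these are not hypotheses of \textup{(F4)} in any case, since \textup{(F4)} assumes no Ramsey property.
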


\subsection{Countable local width of the deciding antichains}

Removing properness from (F4) isolates the exact residue; the theorem below is
\cite[Lem.~3.17]{HMTU2017} with properness deleted.

\begin{definition}[local width]\label{def:width}
For a maximal antichain $\mathcal A$ of $\QI$ and $X\in\Ipos$ put
$B_{\mathcal A}(X)=\{A\in\mathcal A:A\cap X\in\Ipos\}$, the set of members of $\mathcal A$
that stay positive below $X$. Say a name $\dot r$ for a real has \emph{countable width below
$X$} if its refining deciding antichains $(\mathcal A_n)_n$ satisfy
$|B_{\mathcal A_n}(X)|\le\aleph_0$ for every $n$.
\end{definition}

\begin{theorem}[reduction]\label{thm:reduction}
Let $\I$ be tall Borel with the Ramsey property (so, by \textup{(F3)}, $\QI$ adds a real and
is not proper). If \emph{some} name for a new real has countable width below \emph{some}
$X\in\Ipos$, then $\conv\leK\I\upharpoonright X$, contradicting Ramsey. Equivalently: for a
tall Borel Ramsey $\I$, \emph{every} name for a new real has uncountable width below
\emph{every} positive set.
\end{theorem}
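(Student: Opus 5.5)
The plan is to rerun the proof of \cite[Lem.~3.17]{HMTU2017}, as summarized in (F4), with the single use of properness replaced by the countable-width hypothesis. Fix a $\QI$-name $\dot r$ for a new real and refining maximal antichains $(\mathcal A_n)_n$ with $A\Vdash\dot r\upharpoonright n=\sigma_A$ for $A\in\mathcal A_n$. Assume $|B_{\mathcal A_n}(X)|\le\aleph_0$ for every $n$; this is exactly the output properness supplied in (F4). The equivalent reformulation in the statement is the contrapositive together with (F3): (F3) guarantees that new-real names exist, and the first assertion shows that none of them has countable width below any positive set.

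\emph{Step 1: from countable width to decompositions of $X$.} For each $n$, enumerate $B_{\mathcal A_n}(X)=\{A^n_k:k\in\omega\}$ and disjointify relative to $X$, setting $P^n_k=(A^n_k\cap X)\setminus\bigcup_{j<k}A^n_j$. Two members of an antichain have intersection in $\I$, so each $P^n_k$ differs from $A^n_k\cap X$ by an $\I$-small set and remains positive. To see that the pieces essentially cover $X$, use maximality of $\mathcal A_n$ together with countability: if $X\setminus\bigcup_k A^n_k$ were positive, it would meet some member of $\mathcal A_n$ positively. That member would then lie in $B_{\mathcal A_n}(X)$, yet it meets the complement of $\bigcup_k A^n_k$ positively, which is impossible. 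Hence the residue lies in $\I$, and I absorb it into one piece. To make the sequence refining, I pass to common refinements $P^0_{k_0}\cap\dots\cap P^n_{k_n}$ and keep only the positive ones. Because the antichains refine one another, each piece of level $n+1$ sits inside a single piece of level $n$ modulo $\I$, and the small errors can be pushed into the first piece. The remaining ``detecting $\I$ coordinatewise'' clause of a decomposition is the part I would check against the precise definition in \cite{HMTU2017}. It should follow exactly as in (F4), since that verification there uses only countability of $B_n(X)$ and never properness.

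\emph{Step 2: branches have small pseudo-intersections.} Let $P_0\supseteq P_1\supseteq\cdots$ be a branch and let $Z$ be a pseudo-intersection, and suppose toward a contradiction that $Z\in\Ipos$. Then $Z\le P_n\le A_n$ in $\QI$ for every $n$, where $A_n\in\mathcal A_n$ is the antichain member generating $P_n$. So $Z$ decides $\dot r\upharpoonright n=\sigma_{A_n}$ for all $n$, and the $\sigma_{A_n}$ cohere to a ground-model real $x$ with $Z\Vdash\dot r=\check x$. This contradicts the choice of $\dot r$ as a name for a new real, so $Z\in\I$. Now (F2) gives $\conv\leK\I\upharpoonright X$, and (F1) gives $\R\leK\I\upharpoonright X$, contradicting the Ramsey property via Theorem~\ref{thm:kat}.

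The main obstacle I expect is in Step 1: verifying that the countable-width hypothesis alone suffices to produce genuine $\I\upharpoonright X$-decompositions in the technical sense that (F2) requires. In particular, I need the $\I$-small errors from disjointifying and refining to be controlled uniformly, so that the coherence lemma applies verbatim. My first approach would be to copy the construction in \cite[Lem.~3.17]{HMTU2017} line by line and confirm that properness enters only through countability of $B_n(X)$, as (F4) asserts.
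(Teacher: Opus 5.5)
Your route is the paper's own proof. You split $X$ along the countable set $B_{\mathcal A_n}(X)$ and refine. A positive pseudo-intersection of a branch would force $\dot r$ to equal a ground-model real, and (F2) plus (F1) finish. Your Step~2 and your argument that the residue $X\setminus\bigcup_k A^n_k$ lies in $\I$ are correct.

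The one thing you leave unproved is the clause you call the main obstacle: that each partition detects $\I$, i.e.\ every positive $A\subseteq X$ meets some piece positively. The paper checks this in one line, and the check uses maximality of $\mathcal A_n$, not countability. The positive set $A$ is compatible with some member of $\mathcal A_n$. That member then meets $X$ positively, so it is some $A^n_k\in B_{\mathcal A_n}(X)$. Since your $P^n_k$ agrees with $A^n_k\cap X$ modulo $\I$, $A$ meets $P^n_k$ positively. For the refined partitions, apply the same argument at level $n+1$ and note that the chosen member lies below a unique member of level $n$.

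This also settles your worry about controlling the errors uniformly. The union of the countably many $\I$-small intersections you discard when refining is itself in $\I$, because a positive subset of it would have to meet a surviving piece positively, yet it is disjoint from all of them. Do not push this union into a single ``first piece'', since that breaks literal refinement. Instead, absorb its trace inside each level-$n$ piece into one child of that piece.
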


\begin{proof}
Let $\dot r$ be a name for a new real with refining deciding antichains $(\mathcal A_n)$ and
suppose $B_n:=B_{\mathcal A_n}(X)$ is countable for all $n$. Enumerate $B_n=\{A^n_k:k\}$ and,
using that $\mathcal A_n$ is a maximal antichain, refine below $X$ to a decomposition
$\mathcal P_n$ of $X$ whose pieces each lie below a single $A^n_k$ (split $X$ along the
countable partition induced by $B_n$; the members of $\mathcal A_n$ outside $B_n$ meet $X$ in
$\I$-small sets and are absorbed). Arrange $\mathcal P_{n+1}$ to refine $\mathcal P_n$ (the
$\mathcal A_n$ already refine; take common refinements below $X$, still countable). Each
partition so produced is a decomposition in the sense of \textup{(F2)}: a positive
$A\subseteq X$ is compatible with some member of the maximal antichain $\mathcal A_n$ ---
necessarily one lying in $B_n$ --- and hence meets the corresponding piece positively; for
a common refinement, apply this at both levels. Now let
$P_0\supseteq P_1\supseteq\cdots$, $P_n\in\mathcal P_n$, be a decreasing branch and $Y$ a
pseudo-intersection. Each $P_n$ lies below some $A^n_{k(n)}\in\mathcal A_n$, so
$Y\le_\I A^n_{k(n)}$ for all $n$, whence $Y\Vdash\dot r\upharpoonright n=\sigma_{A^n_{k(n)}}$
for all $n$; thus $Y$ forces $\dot r=\bigcup_n\sigma_{A^n_{k(n)}}\in V$. As $\dot r$ is
forced to be new, no positive condition forces $\dot r\in V$, so $Y\in\I$. Every branch
pseudo-intersection is $\I$-small; by \textup{(F2)}, $\conv\leK\I\upharpoonright X$, and by
\textup{(F1)} $\I\upharpoonright X$ is not Ramsey. The equivalent phrasing is the
contrapositive.
\end{proof}

\begin{corollary}[the sufficient boundedness principle]\label{cor:DW}
The negative answer to Question~\ref{q:main} follows from:
\begin{quote}\itshape
\textup{(DW)} Every tall Borel ideal whose quotient adds a real admits a new-real name with
countable width below some positive set.
\end{quote}
\end{corollary}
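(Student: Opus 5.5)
The corollary asks us to show that (DW) refutes any tall Borel Ramsey ideal, and the plan is a direct contrapositive via Theorem~\ref{thm:reduction}. Assume (DW) and suppose, towards a contradiction, that $\I$ is a tall Borel ideal with $\Ipos\to(\Ipos)^2_2$.

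The first step is to check that $\I$ falls under the hypothesis of (DW). By \textup{(F3)} of Lemma~\ref{lem:toolkit}, the quotient $\QI$ of a tall Borel Ramsey ideal adds a real; indeed, otherwise $\I$ would be locally above $\R$, contradicting Theorem~\ref{thm:kat}. Borel ideals are analytic, so \textup{(F3)} applies. Thus $\I$ is a tall Borel ideal whose quotient adds a real, and (DW) supplies a $\QI$-name $\dot r$ for a new real together with a positive set $X$ below which $\dot r$ has countable width in the sense of Definition~\ref{def:width}.

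The second step is to apply Theorem~\ref{thm:reduction} to this $\dot r$ and $X$. It yields $\conv\leK\I\upharpoonright X$. By \textup{(F1)}, this gives $\R\leK\I\upharpoonright X$, so by Theorem~\ref{thm:kat} the ideal $\I$ is not Ramsey, which is a contradiction. Hence under (DW) no tall Borel ideal is Ramsey, which is the negative answer to Question~\ref{q:main}.

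There is no real obstacle here; the work is in Theorem~\ref{thm:reduction}. The only point to verify is a compatibility of conventions. Definition~\ref{def:width} speaks of ``its refining deciding antichains'', so I would note that (DW) is read with the same fixed choice of refining deciding antichains $(\mathcal A_n)_n$ as in Theorem~\ref{thm:reduction}. That theorem uses only the countability of each $B_{\mathcal A_n}(X)$ for that particular sequence, so any witnessing sequence suffices.
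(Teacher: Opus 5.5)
Your argument is correct and is the intended one. The paper gives no separate proof because the corollary is the contrapositive of Theorem~\ref{thm:reduction}, with \textup{(F3)} guaranteeing that a tall Borel Ramsey $\I$ meets the hypothesis of (DW); your remark that any witnessing sequence of refining deciding antichains suffices is also accurate, since that proof uses only the countability of each $B_{\mathcal A_n}(X)$ for one fixed sequence.
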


\begin{remark}[what \textup{(DW)} really asks, and why it is hard]\label{rem:DW-hard}
Theorem~\ref{thm:reduction} turns ``$\QI$ proper'' into the strictly local ``some new-real
name is countable-width below some $X$''. Three cautions.
\begin{enumerate}
\item \emph{Ramsey forbids the conclusion.} For a Ramsey candidate, (F3) gives non-properness,
and by Theorem~\ref{thm:reduction} \emph{every} name for a new real has uncountable width
below \emph{every} positive set. So (DW) cannot be verified ``by hand'' on a candidate: it must be
derived, in ZFC, from Borelness of the code $a$, in a form that the Ramsey hypothesis cannot
block --- a genuine boundedness\slash reflection theorem, not a construction.
\item \emph{The coanalytic barrier.} The ZFC coanalytic tall Ramsey ideal
(\cite[Thm~4.8]{HMTU2017}) is a definable non-proper forcing for which (DW) fails. Hence (DW)
must separate Borel from coanalytic: it is false for a $\Pi^1_1$ code and would have to be
true for a $\Delta^1_1$ one.
\item \emph{The missing input is a definable name.} If the new real $\dot r$ and the antichains
$(\mathcal A_n)$ could be chosen \emph{analytic in $a$}, then ``$A\cap X\in\Ipos$ and
$A\in\mathcal A_n$'' would be an analytic relation with the width $B_n(X)$ as its sections, and
a Luzin--Novikov / $\Sigma^1_1$-boundedness argument would be available to seek a positive $X$
with countable sections. There is no evident mechanism producing such a definable name for the
real added by a Borel non-proper quotient; this is exactly the open gap.
\end{enumerate}
\end{remark}

\subsection{The twin: reflection of new colourings is continuous reading of names}

Recall (\cite[Thm~4.4]{HMTU2017}, proof) that for a tall Ramsey $\I$ the generic ultrafilter
$U$ (a $V$-generic filter on $\QI$, disjoint from $\I$) is selective with respect to $V$; it
cannot be selective in $V[G]$, for a selective ultrafilter disjoint from a tall ideal
contradicts Mathias. Thus $V[G]$ contains a \emph{new} colouring $\dot c:[\omega]^2\to2$ (or
partition) with \emph{no $U$-homogeneous set}.

\begin{proposition}[the witnessing colouring does not reflect]\label{prop:noreflect}
Let $\I$ be tall Borel Ramsey and let $\dot c$ be a new colouring with no $U$-homogeneous set,
as above. Then there is no positive $A\in U$ and no ground-model colouring $c\in V$ with
$A\Vdash\dot c\upharpoonright[A]^2=c\upharpoonright[A]^2$.
\end{proposition}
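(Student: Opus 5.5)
Suppose toward a contradiction that $A\in\Ipos$ and a ground-model colouring $c\in V$ satisfy
\[
A\Vdash\dot c\upharpoonright[A]^2=c\upharpoonright[A]^2 .
\]
The idea is to let the Ramsey property act on $c$ in $V$ and then transport the resulting homogeneous set into $U$ by genericity. We work below $A$ throughout.

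First, fix any condition $A'\le A$. Since $\I$ is Ramsey and this is hereditary (Theorem~\ref{thm:kat}), the ground-model colouring $c\upharpoonright[A']^2$ has an $\I$-positive homogeneous set $H\subseteq A'$ in $V$. This set is a condition with $H\le A'$. Thus the family
\[
\mathcal D_A=\{H\le A: H\text{ is homogeneous for }c\}
\]
is dense below $A$ in $\QI$, and it lies in $V$ because $c$ and $\I$ do.

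Next, let $G$ be generic with $A\in G$, and let $U$ be the associated generic ultrafilter. By density there is $H\in G\cap\mathcal D_A$. The quotient order is modulo $\I$, so $H\in G$ means exactly that $H\in U$ as a subset of $\omega$. Since $H\subseteq A$, the forced equality gives $\dot c^G\upharpoonright[H]^2=c\upharpoonright[H]^2$, which is constant. Hence $H$ is a $U$-homogeneous set for $\dot c^G$. This contradicts the choice of $\dot c$ as a colouring with no $U$-homogeneous set.

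The hypothesis ``$A\in U$'' in the statement is handled the same way. A name $\dot A$ with $\dot A\in U$, or a ground-model $A$ lying in the generic, can be strengthened to a condition below which the equality is forced. Since $\dot c$ is forced to have no homogeneous set in $U$, no condition can force the reflection, so no such $A$ exists.

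The main obstacle is bookkeeping rather than combinatorics. The reflection hypothesis must be stated at the level of forcing, as above, so that the density argument is legitimate. We also need the convention that membership in $U$ for a positive set agrees with $H\in G$ modulo the ideal; if $U$ is generated by $G$ only up to $\I$, we must check that replacing $H$ by an $\I$-equivalent set does not destroy homogeneity. Homogeneity is preserved under removing $\I$-small sets, and $H\cap B$ is homogeneous for any $B\in U$ with $H\cap B$ positive, so we take the homogeneous witness inside $H\cap B$ if necessary.
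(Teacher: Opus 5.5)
Your argument is correct and is essentially the paper's proof: positive $c$-homogeneous sets are dense below $A$, genericity puts such an $H\subseteq A$ into $U$, and the forced equality then makes $H$ homogeneous for $\dot c$ (you rightly use the hereditary Ramsey property for the density step, which is what it needs, where the paper loosely says ``ordinary''). One cosmetic slip: $A'\le A$ holds only modulo $\I$, so apply the Ramsey property to the positive set $A'\cap A$ to get $H\subseteq A$ literally, as your later step uses.
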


\begin{proof}
Suppose such $A\in U$ and $c\in V$ existed. Since $\I$ has the ordinary Ramsey property, the
set of conditions containing a $c$-homogeneous positive subset is dense below $A$; as $U$ is
generic, some $c$-homogeneous $H\in U$ with $H\subseteq A$ exists. Then $H\Vdash\dot
c\upharpoonright[H]^2=c\upharpoonright[H]^2$ is constant, so $H$ is $\dot c$-homogeneous and
$H\in U$, contradicting the choice of $\dot c$.
\end{proof}

\begin{corollary}[reflection $\Rightarrow$ not Ramsey]\label{cor:RC}
The negative answer to Question~\ref{q:main} follows from:
\begin{quote}\itshape
\textup{(RC)} For a tall Borel $\I$ and every $\QI$-name $\dot c$ for a colouring
$[\omega]^2\to2$, the conditions $A$ with $A\Vdash\dot c\upharpoonright[A]^2\in V$ are dense.
\end{quote}
\end{corollary}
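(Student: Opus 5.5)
We derive (RC) $\Rightarrow$ ``no tall Borel Ramsey ideal'' by contradiction, combining (RC) with Proposition~\ref{prop:noreflect}. Suppose (RC) holds and $\I$ is a tall Borel Ramsey ideal. Let $G$ be $V$-generic for $\QI$, with generic ultrafilter $U$. As recalled before Proposition~\ref{prop:noreflect}, $U$ is selective over $V$ but not in $V[G]$: a selective ultrafilter disjoint from the tall analytic ideal $\I$ (reinterpreted in $V[G]$ via its Borel code $a$) would contradict Mathias' theorem. So in $V[G]$ there is a colouring $c^*:[\omega]^2\to2$ with no homogeneous set in $U$.

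We then fix a $\QI$-name $\dot c$ for $c^*$. We strengthen this to a name that is \emph{forced}, by the maximal condition, to be a colouring with no $U$-homogeneous set. To do this, pick a condition $p\in G$ forcing ``$\dot c$ has no $\dot U$-homogeneous set''. Then amalgamate along a maximal antichain: below conditions incompatible with $p$, use any name for a colouring witnessing the same failure. Such names exist below every condition by the same Mathias argument applied below that condition, since $\I\upharpoonright A$ is again tall Borel Ramsey. Alternatively, simply work below $p$ throughout.

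Now apply (RC) to $\dot c$. The conditions $A$ with $A\Vdash\dot c\upharpoonright[A]^2\in V$ are dense, so we may choose such an $A\le p$. Since $A$ forces $\dot c\upharpoonright[A]^2$ to equal some ground-model object, we extend $A$ to some $A'\le A$ and fix $c\in V$ with $A'\Vdash\dot c\upharpoonright[A']^2=c\upharpoonright[A']^2$; here $c$ is a ground-model colouring of $[A']^2$, extended arbitrarily. Deciding which ground-model colouring it is requires a further extension. This is harmless, because restricting to the smaller $[A']^2$ preserves the equality.

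Taking a generic $G$ containing $A'$, we obtain $A'\in U$ and $c\in V$ exactly as excluded by Proposition~\ref{prop:noreflect}, a contradiction. Hence no tall Borel Ramsey ideal exists, i.e.\ Question~\ref{q:main} has a negative answer.

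The main obstacle is bookkeeping rather than mathematics. First, we must ensure the witnessing name $\dot c$ is fixed before (RC) is invoked and that the reflecting condition lies below a condition forcing non-homogeneity. Second, we must check that Proposition~\ref{prop:noreflect}'s density argument still works when $c$ is only defined on $[A']^2$. It does: the ordinary Ramsey property applied to $\I\upharpoonright A'$, which is Ramsey by Theorem~\ref{thm:kat}, gives a positive $c$-homogeneous $H\subseteq A'$, and genericity places such an $H$ into $U$.
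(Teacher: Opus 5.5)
Your proof is correct and follows the paper's route: (RC) yields a condition in $U$ that reads the witnessing name $\dot c$ into a ground-model colouring, which Proposition~\ref{prop:noreflect} forbids. You only make the bookkeeping explicit: you fix $\dot c$ below a condition $p$ forcing non-homogeneity, extend to decide the ground-model value, and take a generic through the reflecting condition, whereas the paper lets $U$ meet the dense set in a single extension. One small slip: in $\QI$, $A'\le A$ only means $A'\setminus A\in\I$, so replace $A'$ by the equivalent condition $A'\cap A$ before restricting the equality to $[A']^2$.
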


\begin{proof}
If (RC) holds, $U$ meets that dense set: some $A\in U$ reads $\dot c$ continuously into a
ground-model $c$. Then $A,c$ contradict Proposition~\ref{prop:noreflect}. So no such $\dot c$
(hence no witness to non-selectivity of $U$ in $V[G]$) exists; $U$ is selective in $V[G]$,
contradicting Ramsey via Mathias as in \cite[Thm~4.4]{HMTU2017}.
\end{proof}

\begin{remark}[unification: both faces are ``Borelness $\Rightarrow$ local properness'']
\label{rem:unify}
Property (RC) is precisely a \emph{continuous reading of names} for colourings on a positive
condition. Continuous reading of names is the defining regularity of definable \emph{proper}
forcings (Zapletal's idealised forcing); (F4) is the same phenomenon for names of reals. Thus
Corollary~\ref{cor:DW} and Corollary~\ref{cor:RC} are two forms of one statement:
\begin{quote}\itshape
Borelness of the code forces a properness-like reading of $\QI$-names on \emph{some} positive
condition, even though $\QI$ is globally non-proper.
\end{quote}
Proposition~\ref{prop:noreflect} shows the reading genuinely fails for the specific new object
a Ramsey candidate must add, so neither (DW) nor (RC) is a soft consequence of the necessary
conditions; each is a $\Pi^1_1\!\to\!\Sigma^1_1$ (coanalytic-to-analytic) collapse of the
name structure, unavailable for the coanalytic example and with no known Borel mechanism.
\end{remark}

\section{The main theorem and the converse question}\label{sec:targets}

This final section assembles the results of the paper into a single statement
(Theorem~\ref{thm:master}), records a forcing-absoluteness transfer available to any
future attempt on Question~\ref{q:main}, summarizes the constraints established, and
states the three questions that remain.

\subsection{No countable local reading}\label{ssec:master}

The following theorem is a synthesis: its proof consists of assembling results proved in
Sections~\ref{sec:countable}--\ref{sec:core}. We use \emph{reading} as an umbrella term:
faces (a)--(c) concern literal countable readings of positivity --- by represented ideals,
by dense sets of conditions, by deciding antichains --- while face (d) is the analogous
failure of countable saturation in the barrier dimension.

\begin{theorem}[no countable local reading]\label{thm:master}
Let $\I$ be a tall Borel Ramsey ideal and let $X\in\Ipos$. Then:
\begin{enumerate}
\item[\textup{(a)}] \textup{(representations)} $\I\upharpoonright X$ is not the
intersection of countably many ideals each failing the ordinary Ramsey property --- in
particular, of countably many topologically represented ideals --- nor of countably many
tall analytic $P$-ideals; in particular the Kwela--Sabok
representation of $\I\upharpoonright X$ has no countable subfamily with the same
intersection.
\item[\textup{(b)}] \textup{(quotient topology)} $\mathcal P(\omega)/\I$ has no countable
dense subset below $X$.
\item[\textup{(c)}] \textup{(forcing names)} every $\QI$-name for a new real has
uncountable width below $X$, and the colouring witnessing non-selectivity of the generic
ultrafilter is not read continuously into the ground model on any positive
$A\subseteq X$.
\item[\textup{(d)}] \textup{(barriers)} for every closed hereditary tall
$\mathcal K\subseteq\I$ and all dimensions $n_1<\dots<n_m$ there is one positive
$H\subseteq X$ with $[H]^{n_i}\subseteq\mathcal K$ for every $i\le m$, yet no positive
$H\subseteq X$ satisfies $[H]^n\subseteq\mathcal K$ for all $n$ simultaneously.
\end{enumerate}
\end{theorem}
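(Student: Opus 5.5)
The plan is to prove each face by localizing to $X$ and citing the section where it was essentially established. The common first step is that $\I\upharpoonright X$ is again a tall Borel Ramsey ideal, by Theorem~\ref{thm:kat} and the closure remarks in the proof of Corollary~\ref{cor:normal}. Every local statement therefore reduces to a global statement about a tall Borel Ramsey ideal $\J:=\I\upharpoonright X$, once $\J$-positive sets are identified with $\I$-positive subsets of $X$.

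For face~(a), suppose $\I\upharpoonright X=\bigcap_m\J_m$. If every $\J_m$ fails ordinary Ramsey, Theorem~\ref{thm:int-nonramsey} applied to $\J$ gives a contradiction; Lemma~\ref{lem:toprep-nonramsey} supplies the topologically represented case. If instead every $\J_m$ is a tall analytic $P$-ideal, Theorem~\ref{thm:int-sub} gives a positive $Y$ with $\EDfin\leK\J\upharpoonright Y$. Since $\R\leK\EDfin$, Theorem~\ref{thm:kat} is contradicted. The Kwela--Sabok clause is Corollary~\ref{cor:frontier}, applied to the weakly selective Borel ideal $\J$. Weak selectivity is available by Theorem~\ref{thm:standard}.

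Face~(b) is Theorem~\ref{thm:qdense} verbatim, since the quotient below $X$ is a positive restriction. For face~(c), the first clause is Theorem~\ref{thm:reduction} specialised to the given $X$. The second clause is Proposition~\ref{prop:noreflect}, with one bookkeeping point to check. A positive $A\subseteq X$ reading $\dot c$ continuously into some $c\in V$ need not lie in $U$. So I would run the argument of Proposition~\ref{prop:noreflect} with generic filters $G$ containing $A$: the ordinary Ramsey property of $\I\upharpoonright A$ makes the $c$-homogeneous positive subsets dense below $A$. Hence $A$ forces some member of $U$ to be $\dot c$-homogeneous. This contradicts the fact that $\dot c$ is forced to have no $U$-homogeneous set, provided $\dot c$ is fixed as a name forced by $1$ to witness non-selectivity.

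Face~(d) has two halves. For the positive half, fix $n_1<\dots<n_m$ and a closed hereditary tall $\mathcal K\subseteq\I$. I would like to apply Lemma~\ref{lem:coord} with $\mathcal B=[\omega]^{m'}$ for $m'>n_m$, but that lemma assumes $\mathcal B$-Ramseyness. For finite-dimensional barriers this follows from the Ramsey property, since Ramsey collapses all finite dimensions and colour counts by \cite[Thm~3.13]{CDU2026}, applied on positive restrictions. So no Nash--Williams hypothesis is needed. Lemma~\ref{lem:coord} with $M=X$ then yields one positive $H\subseteq X$ with $[H]^{n_i}\subseteq\mathcal K$ for all $i$. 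The negative half is Remark~\ref{rem:window}. If $[H]^n\subseteq\mathcal K$ for all $n$, then every finite subset of $H$ lies in $\mathcal K$. Since $\mathcal K$ is closed and $H$ is the limit of its initial segments, $H\in\mathcal K\subseteq\I$, contradicting positivity.

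I expect the main obstacle to be face~(c)'s second clause. The issue is making precise which name $\dot c$ is meant and that its witnessing property is forced by the trivial condition, so that it holds below an arbitrary positive $A\subseteq X$ rather than only below members of a fixed generic. A secondary concern is checking that the finite-rank instance of Lemma~\ref{lem:coord} needs only Ramseyness, as argued above.
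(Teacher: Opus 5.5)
Your proposal is correct and follows the paper's proof essentially verbatim. You localize to $\I\upharpoonright X$ via Theorem~\ref{thm:kat}, then read off (a)--(d) from Theorems~\ref{thm:int-sub} and~\ref{thm:int-nonramsey}, Corollary~\ref{cor:frontier}, Theorem~\ref{thm:qdense}, Theorem~\ref{thm:reduction}, Proposition~\ref{prop:noreflect}, and Lemma~\ref{lem:coord} (with $[\omega]^{m'}$-Ramseyness from \cite[Thm~3.13]{CDU2026}), plus closedness of $\mathcal K$. Your extra bookkeeping in (c) is a valid tightening of a step the paper leaves implicit, since Proposition~\ref{prop:noreflect} is only stated for $A\in U$: you fix $\dot c$ so that $\mathbf 1$ forces it to witness non-selectivity, and force over generics containing an arbitrary positive $A\subseteq X$.
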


\begin{proof}
Since $\I\upharpoonright X$ is again a tall Borel Ramsey ideal (Theorem~\ref{thm:kat}),
all four parts localize, and we may quote the corresponding global statements. Part~(a) is
Theorems~\ref{thm:int-sub} and~\ref{thm:int-nonramsey} applied to $\I\upharpoonright X$,
together with Corollary~\ref{cor:frontier}. Part~(b) is Theorem~\ref{thm:qdense}. Part~(c)
is Theorem~\ref{thm:reduction} together with Proposition~\ref{prop:noreflect}. For
part~(d): a Ramsey ideal is $[\omega]^n$-Ramsey for every $n$ \cite[Thm~3.13]{CDU2026}, so
the coherent saturation of the finite window $n_1<\dots<n_m$ is Lemma~\ref{lem:coord}
applied to the uniform barriers $[\omega]^{n_i}$ inside $[\omega]^{m'}$, $m'>n_m$; and if
some positive $H$ had $[H]^n\subseteq\mathcal K$ for every $n$, then every finite subset
of $H$ would lie in the hereditary family $\mathcal K$, whence $H\in\mathcal K$ by
closedness --- contradicting $H\notin\I\supseteq\mathcal K$.
\end{proof}

Informally: no countable amount of definable data --- representing ideals, dense sets of
conditions, deciding antichains, barrier dimensions --- reads the positivity of a Borel
Ramsey ideal below any positive set. This is exactly the freedom the coanalytic example
enjoys, there sustained by the unbounded ranks of $\Pi^1_1$ sets; the converse question of
\S\ref{ssec:central} asks whether a Borel code can afford it.

\subsection{A forcing-absoluteness freedom}\label{sec:absolute}

The following is a routine instance of Shoenfield absoluteness; we record it, claiming no
novelty, because of the freedom it gives to any proof of a negative answer.

\begin{proposition}[Shoenfield transfer]\label{prop:absolute}
For a fixed Borel code, ``$\I$ is a tall Ramsey ideal'' is $\Pi^1_2$ and ``$\R\leK
\I\upharpoonright X$ for some $X\in\Ipos$'' is $\Sigma^1_2$; both are absolute (Shoenfield).
Hence if ZFC${}+{}$CH (or ${}+{}$MA, ${}+{}\mathrm{cov}(\mathcal M)=\mathfrak c$, \dots)
proves Question~\ref{q:main} has a negative answer, then ZFC does.
\end{proposition}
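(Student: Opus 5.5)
The plan is to compute the projective complexity of both statements directly from the definitions, using the fixed Borel code $a$ of $\I$. After that, Shoenfield absoluteness between $V$ and its forcing extensions will give the transfer.

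For the first statement, ``$\I$ is a tall Ramsey ideal'' splits into three parts.
\begin{enumerate}
\item ``$\I$ is a proper ideal containing $\mathrm{Fin}$'' is $\Pi^1_1$ in $a$. Closure under subsets and finite unions is a universal statement over reals with a Borel matrix, and Borel sets are $\Delta^1_1$.
\item Tallness reads $\forall A\in[\omega]^\omega\,\exists B\in[\omega]^\omega\,(B\subseteq A\wedge B\in\I)$, which is $\Pi^1_2$.
\item The Ramsey property reads $\forall X\,\forall c\,[\,X\notin\I\to\exists Y\,(Y\subseteq X\wedge Y\notin\I\wedge c\upharpoonright[Y]^2\text{ constant})\,]$. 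Here $c$ ranges over $2^{[\omega]^2}$, a Polish space. The matrix is Borel, since $Y\notin\I$ is Borel in $a$ and homogeneity is closed. So this clause is $\Pi^1_2$.
\end{enumerate}
A conjunction of $\Pi^1_2$ statements is $\Pi^1_2$.

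For the second statement, $\exists X\,\exists f\in\omega^\omega\,[\,X\notin\I\wedge\forall A\,(A\in\R\to f^{-1}[A]\cap X\in\I)\,]$ is $\Sigma^1_2$. This uses that $\R$ is Borel and that $f^{-1}[A]$ depends continuously on $(f,A)$. Alternatively, by Theorem~\ref{thm:kat} the second statement is the negation of the Ramsey clause, so it is $\Sigma^1_2$ directly.

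The transfer is then routine. Suppose ZFC${}+{}$CH proves that no tall Borel ideal is Ramsey, and work in an arbitrary model $V$ of ZFC. Fix a Borel code $a$ and suppose toward a contradiction that $V\models$ ``$\I_a$ is a tall Ramsey ideal''. Force CH, for instance with $\mathrm{Add}(\omega_1,1)$, or force MA or $\mathrm{cov}(\mathcal M)=\mathfrak c$ by the usual ccc iterations, to get $V[G]$. The $\Pi^1_2(a)$ statement is upward absolute to $V[G]$ by Shoenfield, and there $a$ still codes a Borel set that is a tall Ramsey ideal. This contradicts the hypothesis applied in $V[G]$. Hence ZFC alone refutes the existence of such an $a$.

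One check is needed: that in $V[G]$ the reinterpreted code defines an ideal with the same properties. That is exactly what absoluteness of the displayed $\Pi^1_2$ formula provides, so it is not a separate issue. The only genuine care point is the Ramsey clause: it must quantify over colourings as reals, which is possible because $[\omega]^2$ is countable. This keeps the formula $\Pi^1_2$ rather than higher, and I do not expect any real obstacle.
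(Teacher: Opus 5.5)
Your proposal is correct and follows the paper's proof essentially step by step. You use the same quantifier count: $\Pi^1_1$ for the ideal axioms, $\Pi^1_2$ for tallness and for the Ramsey clause with colourings coded as reals, and $\Sigma^1_2$ for the local Kat\v{e}tov witness. You also use the same transfer, forcing CH (or MA, etc.) over a model containing a putative code and applying Shoenfield upward; your side remarks (that $\mathrm{Add}(\omega_1,1)$ adds no reals, and that the $\Sigma^1_2$ clause is, via Theorem~\ref{thm:kat}, ZFC-provably equivalent to the negation of Ramseyness for an ideal) are harmless refinements.
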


\begin{proof}
Fix the code. Being a proper ideal containing the finite sets is $\Pi^1_1$ in the code
(closure under subsets and unions quantifies universally over reals with a Borel matrix);
tallness is $(\forall X\in[\omega]^\omega)(\exists Y\in[X]^\omega)\,Y\in\I$, and the
Ramsey property is
$(\forall X)(\forall c)(\exists H)\,[\,X\in\Ipos\wedge c:[X]^2\to2\ \rightarrow\
H\in\Ipos\wedge H\subseteq X\wedge c\text{ constant on }[H]^2\,]$; in both, the matrix is
Borel in the code, so the conjunction is $\Pi^1_2$. Likewise
``$(\exists X)(\exists f)\,[\,X\in\Ipos$ and $f$ witnesses
$\R\leK\I\upharpoonright X\,]$'' is $\Sigma^1_2$: the inner clause
$(\forall A)\,[\,A\in\R\rightarrow f^{-1}[A]\cap X\in\I\,]$ is $\Pi^1_1$ because $\R$ and
$\I$ are Borel. Shoenfield's absoluteness applies to both. For the transfer, suppose
ZFC${}+{}$CH proved the negative answer, and let $a$ be a Borel code of a putative tall
Ramsey ideal in $V$. Force CH; in $V[G]$ the code $a$ still defines a tall Borel Ramsey
ideal ($\Pi^1_2$-absoluteness), contradicting the assumed theorem in $V[G]$. The
argument is uniform in $a$, so ZFC alone proves the negative answer. The other listed
hypotheses are forced the same way (MA by the usual finite-support ccc iteration, etc.).
\end{proof}

\begin{remark}[transfer template]\label{rem:transfer}
The operative form of Proposition~\ref{prop:absolute} is downward transfer of a \emph{new}
local witness: if in some forcing extension $V[G]$ one produces reals $X,f$ with
$\R\leK\I\upharpoonright X$ (the $\Sigma^1_2$ statement, possibly using generic reals), then
such a witness already exists in $V$. So a forced hypothesis (CH, MA) may be used to
\emph{fabricate} the obstruction, not merely to assume an axiom. The argument in $V[G]$ must
end in the $\Sigma^1_2$ conclusion itself, not in ``the generic destroys the ground-model
objects'' --- the latter does not quantify over the new objects. The global statement
``there is a Borel tall Ramsey ideal'' is $\Sigma^1_3$ (it begins $\exists$ code), so this is
used per fixed counterexample code, never on the global sentence.
\end{remark}

\begin{remark}[why forcing cannot refute: repopulation]\label{rem:absolute-caveat}
Absoluteness also \emph{protects} the coanalytic example: in any extension $\I$ stays Ramsey,
so a generic (Cohen/Rado) colouring still has a positive homogeneous set. The mechanism is
concrete. Any family $\mathcal A$ defined from the Borel code (nodes, residuals, positive
sets) is \emph{repopulated} in $V[G]$: $\mathcal A^{V[G]}\supsetneq\mathcal A^V$ and, when
$\mathfrak c^{V[G]}>\omega_1$, $|\mathcal A^{V[G]}|=\mathfrak c$. A splitter of the old
$\mathcal A^V$ need not split the new members, and the old family need not remain cofinal for
the new positives; this is exactly how the coanalytic example survives splitter-adding
extensions. Consequently the amplifier ``$\mathrm{MA}\Rightarrow\mathfrak r=\mathfrak c$
splits every family of size $<\mathfrak c$'' bites only against a family that is cofinal for
\emph{all} reals of the extension, not just the ground-model ones. The still-missing input
is therefore an $\omega_1$-reflection: a size-$\omega_1$ family, arising definably from
the ideal in every universe, cofinal for all current positives and unsplittable. Even
then, a splitter of such a family refutes nothing by itself --- it must be converted,
Borel-uniformly and absolutely, into a countable-width name for a new real
(Theorem~\ref{thm:reduction}) or a continuous reading of a new colouring
(Corollary~\ref{cor:RC}); the forcing route thus terminates in Questions~\ref{q:DW}
and~\ref{q:RC} below.
\end{remark}

The formats in which such a family could be sought line up as follows; both parts of the
lemma are immediate from classical facts, and we record them claiming no novelty.

\begin{lemma}[formats]\label{lem:format}
Let $\mathcal N\subseteq[\omega]^\omega$.
\begin{enumerate}
\item If $\mathcal N$ is coanalytic with a bounded $\Pi^1_1$-rank, then $\mathcal N$ is
Borel, hence analytic.
\item Assume \textup{CH}. If $\mathcal N$ is analytic, then $|\mathcal N|\le\omega_1$.
\end{enumerate}
\end{lemma}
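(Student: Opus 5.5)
Both parts follow from classical descriptive set theory; I plan two short arguments that rest on standard theorems only.

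For part~1, I take $\mathcal N$ coanalytic with a $\Pi^1_1$-rank $\varphi:\mathcal N\to\omega_1$ bounded by some countable $\alpha$, i.e.\ $\varphi(x)<\alpha$ for every $x\in\mathcal N$. The plan is to use the standard fact that each initial segment $\mathcal N_{<\alpha}=\{x\in\mathcal N:\varphi(x)<\alpha\}$ of a $\Pi^1_1$-rank is Borel (Kechris, Thm~34.4 / Exercise 34.6). Boundedness gives $\mathcal N=\mathcal N_{<\alpha}$, so $\mathcal N$ is Borel, and Borel sets are analytic.

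To make this self-contained I will argue it from the rank's defining property. Fix $y\in\mathcal N$ with $\varphi(y)=\beta$. Then $\{x:x\le^*_\varphi y\}$ is $\Sigma^1_1$, and by the definition of a $\Pi^1_1$-rank it equals $\{x\in\mathcal N:\varphi(x)\le\beta\}$. The ranks attained below $\alpha$ are countably many, so picking one representative $y_\beta$ for each attained $\beta<\alpha$ writes $\mathcal N$ as a countable union of $\Sigma^1_1$ sets. This shows $\mathcal N$ is analytic; being also coanalytic, it is Borel by Souslin's theorem. The only point needing care is the case where some $\beta<\alpha$ is not attained, but such $\beta$ are simply skipped.

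For part~2, I plan to invoke the perfect set property for analytic sets (Souslin): an analytic $\mathcal N\subseteq[\omega]^\omega$ is either countable or contains a perfect set. In either case $|\mathcal N|\le\mathfrak c$, simply because $\mathcal N\subseteq[\omega]^\omega$ and $|[\omega]^\omega|=\mathfrak c$. Under CH, $\mathfrak c=\omega_1$, so $|\mathcal N|\le\omega_1$. This is trivial, and the perfect set dichotomy only sharpens it to ``countable or exactly $\mathfrak c$''.

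Neither step hides a real obstacle. The one place requiring attention is the precise meaning of ``bounded rank'' in part~1: it should mean bounded below $\omega_1$ on $\mathcal N$ itself, not just on some analytic subset. With that reading, the countable-union argument above is complete.
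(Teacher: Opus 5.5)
Your proposal is correct.

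\textbf{Part 1.} This follows the paper's route. The paper simply cites the boundedness theorem in its ``Borel iff some $\Pi^1_1$-rank is bounded'' form. You unpack the only direction that is needed: for each attained $\beta<\alpha$, the set $\{x\in\mathcal N:\varphi(x)\le\beta\}$ is a section of the $\Sigma^1_1$ comparison relation. So $\mathcal N$ is a countable union of analytic sets, and Souslin's theorem makes it Borel. The hard direction of boundedness (every $\Pi^1_1$-rank on a Borel set is bounded) never enters.

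\textbf{Part 2.} Here your route genuinely differs. The paper appeals to the perfect set property of analytic sets. You instead note that $|\mathcal N|\le|[\omega]^\omega|=\mathfrak c=\omega_1$ under CH for \emph{every} $\mathcal N\subseteq[\omega]^\omega$. That is right and more elementary, and it shows the analyticity hypothesis in (2) plays no role in the bound as stated. What the perfect set theorem adds is the ZFC dichotomy ``countable or of size exactly $\mathfrak c$'', which is informative without CH. For example, it shows that an uncountable analytic family can have size $\omega_1$ only if $\mathfrak c=\omega_1$. For the stated conclusion under CH, however, your one-line count suffices.
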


\begin{proof}
(1) A coanalytic set is Borel if and only if some (equivalently, every) $\Pi^1_1$-rank on
it is bounded, by the boundedness theorem for $\Pi^1_1$-ranks. (2) An uncountable analytic
set contains a perfect set, so under \textup{CH} every analytic set has cardinality at
most $\omega_1$.
\end{proof}

\begin{remark}[the formats are not equivalent]\label{rem:format}
The converses of Lemma~\ref{lem:format} are not claimed, and the size-$\omega_1$ format is
model-sensitive: under $\mathrm{MA}+\mathfrak c>\omega_1$ there is no unsplittable
(reaping) family of size $\le\omega_1$, so a size-$\omega_1$ family of the kind described
in Remark~\ref{rem:absolute-caveat} can exist only in models of $\mathfrak r\le\omega_1$,
such as models of \textup{CH} --- which, by Proposition~\ref{prop:absolute}, may always be
assumed when the goal is a negative answer to Question~\ref{q:main}.
\end{remark}

\subsection{Summary of constraints}\label{ssec:nogo}

The following table collects the forms of argument on Question~\ref{q:main} that the
results of this paper exclude, together with the statements excluding them.

\begin{center}\small
\begin{tabular}{@{}p{0.53\textwidth}p{0.39\textwidth}@{}}
\hline
\emph{Excluded form of argument} & \emph{Excluding statement}\\
\hline
Local obstructions with bounded-width or block structure, or factoring through boundedly
many coordinates & Theorem~\ref{thm:carrier}, Remark~\ref{rem:oscillation};
\cite[Thm~3.13]{CDU2026}\\[3pt]
Colourings built from countable ideal-free data, including canonical forms of interval
separation & Remarks~\ref{rem:see-ideal} and~\ref{rem:route3}\\[3pt]
Carrier programs: $\EDfin$ above a subideal on a positive set &
Theorem~\ref{thm:carrier}\\[3pt]
Countable intersections of represented ideals; countable subfamilies of the
representation & Theorems~\ref{thm:int-sub} and~\ref{thm:int-nonramsey};
Corollary~\ref{cor:frontier}\\[3pt]
Unbounded analytic families of barriers (for Nash--Williams candidates) &
Proposition~\ref{prop:nw-unbounded}, Remark~\ref{rem:boundedness-wall}\\[3pt]
Countable positive $\pi$-bases mod finite & Theorem~\ref{thm:width}\\[3pt]
Low-complexity reals added by the quotient & Propositions~\ref{prop:Brep}
and~\ref{prop:geom} (Appendix~\ref{app:completion})\\[3pt]
Refuting the candidate by destroying ground-model objects in a forcing extension &
Remarks~\ref{rem:transfer} and~\ref{rem:absolute-caveat}\\
\hline
\end{tabular}
\end{center}

\subsection{The converse question}\label{ssec:central}

Theorem~\ref{thm:master} says that a Borel Ramsey ideal escapes every countable local
reading, and the coanalytic example shows that $\Pi^1_1$ codes genuinely have this
freedom. The remaining content of Question~\ref{q:main} is the converse: \emph{does a
Borel code force a countable, properness-like reading somewhere?} The two halves of
face~(c) give this converse a precise shape, and each of the following three forms has its
consequence proved in this paper.

\begin{question}[countable width]\label{q:DW}
Does every tall Borel ideal whose quotient adds a real admit a name for a new real with
countable width below some positive set --- that is, does \textup{(DW)} hold?
\end{question}

An affirmative answer implies a negative answer to Question~\ref{q:main}
(Corollary~\ref{cor:DW}). Remark~\ref{rem:DW-hard} records why (DW) would have to be a
boundedness\slash reflection theorem rather than a construction, and why it fails for
$\Pi^1_1$ codes.

\begin{question}[continuous reading]\label{q:RC}
For every tall Borel ideal $\I$ and every $\QI$-name $\dot c$ for a colouring of pairs,
are the conditions reading $\dot c$ into the ground model dense --- that is, does
\textup{(RC)} hold?
\end{question}

An affirmative answer implies a negative answer to Question~\ref{q:main}
(Corollary~\ref{cor:RC}). By Remark~\ref{rem:unify}, (DW) and (RC) are two faces of a
single principle: Borelness of the code should force a properness-like reading of
$\QI$-names on some positive condition, even though $\QI$ is globally non-proper.

The third form is a \emph{frontification principle} in the sense of \S\ref{sec:ladder}:
although phrased as an implication about $\mathscr S$-Ramsey candidates, its content is
the uniform production of the witness $\mathcal K$ from the candidate's necessary
conditions, not the ensuing logical implication.

\begin{question}[a frontification principle for the Schreier barrier]\label{q:schreier}
Does every tall Borel $\mathscr S$-Ramsey ideal $\I$ admit $X\in\Ipos$ and a closed
hereditary tall $\mathcal K\subseteq\I$ with
$\{H\subseteq X:\mathscr S\upharpoonright H\subseteq\mathcal K\}\subseteq\I$?
\end{question}

\begin{proposition}\label{prop:schreier-suff}
An affirmative answer to Question~\ref{q:schreier} implies that there is no tall Borel
$\mathscr S$-Ramsey ideal, and hence no tall Borel Nash--Williams ideal.
\end{proposition}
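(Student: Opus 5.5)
The argument is a contrapositive built from Corollary~\ref{cor:sat-obstruction}. Suppose the answer to Question~\ref{q:schreier} is affirmative, and suppose toward a contradiction that $\I$ is a tall Borel $\mathscr S$-Ramsey ideal. The affirmative answer supplies some $X\in\Ipos$ and a closed hereditary tall $\mathcal K\subseteq\I$ such that
\[
\{H\subseteq X:\mathscr S\upharpoonright H\subseteq\mathcal K\}\subseteq\I .
\]
Corollary~\ref{cor:sat-obstruction}, applied to the barrier $\mathscr S$, then says that $\I\upharpoonright X$ is not $\mathscr S$-Ramsey.

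To reach a contradiction we need $\I$ itself to fail $\mathscr S$-Ramseyness at a positive set. The cleanest way is to run Lemma~\ref{lem:saturation} directly. Since $\I$ is $\mathscr S$-Ramsey and $\mathcal K\subseteq\I$ is hereditary and tall, the lemma gives a positive $H\subseteq X$ with $\mathscr S\upharpoonright H\subseteq\mathcal K$. The displayed inclusion puts this $H$ in $\I$, contradicting $H\in\Ipos$. So no tall Borel $\mathscr S$-Ramsey ideal exists.

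One point needs care: the barrier bookkeeping. The lemma is stated for a barrier $\mathcal B$ on the domain $M$, and $\mathscr S\upharpoonright X$ is meant as $\{s\in\mathscr S: s\subseteq X\}$. This is a barrier on $X$ (a Schreier-type barrier, with the coding by $\min s$ taken in $\omega$). It is nonempty on every infinite subset of $X$, which is exactly what the proof of Lemma~\ref{lem:saturation} uses. The $\mathscr S$-Ramsey hypothesis is phrased for colourings of $\mathcal B\upharpoonright X$ with $X\in\Ipos$, so it applies verbatim; no passage to an isomorphic copy of $\I\upharpoonright X$ is needed. I expect this to be the only step needing attention, and it is routine.

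For the second clause, Nash--Williams means $\mathcal B$-Ramsey for every barrier $\mathcal B$, in particular for $\mathscr S$. So a tall Borel Nash--Williams ideal would be a tall Borel $\mathscr S$-Ramsey ideal, which we have just excluded. Alternatively, one can cite the chain of implications displayed at the start of \S\ref{sec:ladder}.
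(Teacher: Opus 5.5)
Your proof is correct and is essentially the paper's argument: the paper applies Corollary~\ref{cor:sat-obstruction} to get that $\I\upharpoonright X$ is not $\mathscr S$-Ramsey and then uses that $\mathcal B$-Ramseyness passes to positive restrictions. You instead unfold that corollary by running Lemma~\ref{lem:saturation} directly at the positive set $X$, which makes the inheritance step unnecessary, and your Nash--Williams clause is the same as the paper's.
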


\begin{proof}
Suppose $\I$ is tall Borel $\mathscr S$-Ramsey and let $X,\mathcal K$ be as provided. By
Corollary~\ref{cor:sat-obstruction}, $\I\upharpoonright X$ is not $\mathscr S$-Ramsey;
since the $\mathcal B$-Ramsey properties pass to positive restrictions, this contradicts
$\mathscr S$-Ramseyness of $\I$. The second implication is immediate, as Nash--Williams
means $\mathcal B$-Ramsey for every barrier.
\end{proof}

Question~\ref{q:schreier} is the weakest of the three and the sharpest in complexity: by
\cite[Prop.~4.34, Prop.~4.43]{CDU2026}, $\mathscr S$-Ramseyness already implies the Ramsey
property, so the target ``no tall Borel $\mathscr S$-Ramsey ideal'' is intermediate
between Question~\ref{q:main} and the Nash--Williams version; and for each fixed barrier
there is a coanalytic $\mathcal B$-Ramsey tall ideal (\cite[Thm~4.29]{CDU2026}), so an
affirmative answer would separate Borel from coanalytic at the level of a single barrier.
Proposition~\ref{prop:selector} bounds the possible proofs: the required $\mathcal K$
cannot be produced from Borelness and tallness alone, and the mass choice
$\mathcal K=\{A:\varphi(A)\le1\}$ provably fails, so $\mathcal K$ must come from the
below-$\nwd$ structure of the candidate (Section~\ref{sec:reduced}). Whether the
coanalytic example is $\mathscr S$-Ramsey (\cite[Q.4.39]{CDU2026}, itself open) serves as
calibration. The following reformulation makes the question concrete.

\begin{proposition}[prescribed identity cuts]\label{prop:schreier-cuts}
$\I$ is $\mathscr S$-Ramsey iff for every positive $X$ and every sequence
$c_n:[X\setminus(n+1)]^n\to2$ there is a positive $H\subseteq X$ with each $c_n$ constant
on $[H\setminus(n+1)]^n$ for $n\in H$; that is, iff $\I$ satisfies the simultaneous
almost-homogeneity of \cite[Prop.~3.15]{CDU2026} with \emph{prescribed identity cuts}. The
exact delta between Ramsey and $\mathscr S$-Ramsey is thus who controls the cuts.
\end{proposition}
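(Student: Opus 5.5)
The plan is to compare both sides with the definition of $\mathscr S$-Ramseyness through the canonical bijection between Schreier sets and the data $(n,t)$. Every $s\in\mathscr S$ with $\min s=n$ has the form $s=\{n\}\cup t$, where $t\in[\omega\setminus(n+1)]^n$. Conversely, each such pair $(n,t)$ gives a member of $\mathscr S$. So a $2$-colouring $c$ of $\mathscr S\upharpoonright X$ is the same thing as a sequence $c_n(t)=c(\{n\}\cup t)$, indexed by $n\in X$. For $n\notin X$ the $c_n$ are irrelevant, or can be extended arbitrarily. I will read the homogeneity condition in the statement as follows: each $c_n$ is constant on $[H\setminus(n+1)]^n$ with value $i_n$, and the value $i_n$ is allowed to depend on $n$. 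That is exactly the form of simultaneous almost-homogeneity in \cite[Prop.~3.15]{CDU2026}, but with the cut for $c_n$ prescribed to be $n+1$.

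For ($\Leftarrow$), take $X\in\Ipos$ and $c:\mathscr S\upharpoonright X\to2$, and form the $c_n$ as above. The hypothesis gives a positive $H\subseteq X$ with each $c_n$ ($n\in H$) constant on $[H\setminus(n+1)]^n$ with value $i_n$. Then $c$ on $\mathscr S\upharpoonright H$ is given by $c(\{n\}\cup t)=i_n$, so $c$ depends only on the minimum. Next colour the points $n\in H$ by $i_n$. This is a finite partition of a positive set, so one piece $H'\subseteq H$ is positive, since $\I$ is an ideal. On $\mathscr S\upharpoonright H'$ the colouring $c$ is then constant. I should check that $\mathscr S\upharpoonright H'$ is computed with the same indexing. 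It is: membership in $\mathscr S$ uses the actual value of $\min s$, not its rank inside $H'$. The passage from two colours to finitely many is routine.

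For ($\Rightarrow$), given $c_n$, define $c(\{n\}\cup t)=c_n(t)$ on $\mathscr S\upharpoonright X$. This colouring does not have the min-determined form, so a direct homogeneous set does not immediately give what we want. If $H\subseteq X$ is positive and $c$-homogeneous with value $i$, then every $s\in\mathscr S\upharpoonright H$ has colour $i$. For $n\in H$ and $t\in[H\setminus(n+1)]^n$ we have $\{n\}\cup t\in\mathscr S\upharpoonright H$, so $c_n(t)=i$. Hence each $c_n$ is constant on $[H\setminus(n+1)]^n$, and indeed with a single common value. That is even stronger than required.

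I expect the main obstacle to be bookkeeping rather than combinatorics. In the reverse direction, the delicate point is the reduction from "constant for each $n$" to "globally constant", together with making sure that the pigeonhole step on minima preserves positivity and does not disturb the cut condition. It does not, because passing to $H'\subseteq H$ keeps $[H'\setminus(n+1)]^n\subseteq[H\setminus(n+1)]^n$. The closing sentence of the statement about "who controls the cuts" is interpretive. I would justify it only by contrast with \cite[Prop.~3.15]{CDU2026}, where the finite sets $F_m$ are chosen by the homogeneous set, whereas here they are forced to be $n+1$.
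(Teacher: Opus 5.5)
Your proposal is correct and follows the paper's proof essentially verbatim. It uses the same correspondence $c_n(t)=c(\{n\}\cup t)$ and the same forward direction: a positive homogeneous set makes every section with index in $H$ constant, with a common value. It also uses the same converse, partitioning $H$ by the values $i_n$ and keeping a positive piece $H'$; restricting to $H'$ only shrinks the tails. Your aside on passing from two colours to finitely many (iterate bit by bit, then pigeonhole) is a harmless addition that the paper leaves implicit.
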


\begin{proof}
Each $s\in\mathscr S$ equals $\{n\}\cup t$ with $n=\min s$ and
$t\in[\omega\setminus(n+1)]^{n}$, so colourings $c:\mathscr S\upharpoonright X\to2$
correspond exactly to sequences of sections $c_n:[X\setminus(n+1)]^n\to2$ ($n\in X$) via
$c_n(t)=c(\{n\}\cup t)$.

Suppose $\I$ is $\mathscr S$-Ramsey and let $(c_n)$ be given. Assemble the sections into
a colouring $c$ of $\mathscr S\upharpoonright X$ and take a positive homogeneous
$H\subseteq X$, of colour $i$ say. For $n\in H$ and $t\in[H\setminus(n+1)]^n$ we have
$\{n\}\cup t\in\mathscr S\upharpoonright H$, so $c_n(t)=i$: each section with index in
$H$ is constant on the prescribed tail.

Conversely, suppose the displayed property holds and let $c$ colour
$\mathscr S\upharpoonright X$. Take $H\in\Ipos$, $H\subseteq X$, such that each $c_n$
with $n\in H$ is constant on $[H\setminus(n+1)]^n$, with value $i_n\in2$. One of
$\{n\in H:i_n=0\}$ and $\{n\in H:i_n=1\}$ is positive; call it $H'$. Restricting to $H'$
shrinks the tails, so for $s\in\mathscr S\upharpoonright H'$ we get
$c(s)=c_{\min s}(s\setminus\{\min s\})=i_{\min s}$, and $i_n$ is constant for
$n\in H'$: $H'$ is homogeneous. Thus $\I$ is $\mathscr S$-Ramsey. The displayed property
is precisely the simultaneous almost-homogeneity of \cite[Prop.~3.15]{CDU2026} with the
cut at $n$ prescribed to be $n+1$ itself.
\end{proof}

\begin{remark}[why the coanalytic example escapes]\label{rem:shared}
In each of the three questions the natural definable family --- deciding antichains,
refining decompositions, barriers --- is coanalytic, not analytic: ``is a barrier'' is
$\Pi^1_1$-complete \cite[Prop.~4.45]{CDU2026}, and by Theorem~\ref{thm:width} no such
family can be a countable positive $\pi$-base. Borelness of $\I$ supplies no evident
mechanism to collapse these families to analytic sets, or to size $<\mathfrak c$, while
preserving their reach after forcing: a definable family repopulates
(Remark~\ref{rem:absolute-caveat}). This is exactly how the coanalytic example survives
--- $\Pi^1_1$ sets carry unbounded ranks in ZFC --- and why an affirmative answer to any
of the three questions must route through a boundedness or reflection theorem that uses
Borelness essentially, in the form crystallized by Remark~\ref{rem:unify}: Borelness must
force a properness-like reading of names on a positive condition.
\end{remark}

\appendix
\section{The added real is completion-strict}\label{app:completion}

This appendix explains why ``make the added real of low complexity'' does not open (DW).
Its content is known in substance --- a re-derivation, in Boolean/forcing language, of
facts equivalent to \cite[Thm~3.13]{HMTU2017} --- and we present it as such; the
contribution is the reformulation, which closes a family of approaches and locates the
added real precisely (Remark~\ref{rem:subexp}). Throughout, $\I$ is a tall Borel Ramsey
ideal, as in Section~\ref{sec:core}. Write $\mathbb B=\mathcal P(\omega)/\I$, a Boolean
algebra whose nonzero elements are the conditions of $\QI$, and let $\dot U$ be the
generic ultrafilter. Call a name $\dot w$ for an element of $2^\omega$
\emph{$\mathbb B$-represented} if there is a ground-model sequence $(C_n)_n$ of subsets of
$\omega$ with $\dot w(n)=1\iff C_n\in\dot U$ --- i.e.\ every bit's Boolean value is
realised by an actual set, $[\![\dot w(n)=1]\!]=[C_n]\in\mathbb B$ (not merely in the
completion $\mathrm{RO}(\mathbb B)$).

\begin{proposition}[$\mathbb B$-represented new real $\Rightarrow\conv$; known in
substance, cf.\ {\cite[Thm~3.13]{HMTU2017}}]\label{prop:Brep}
Let $\I$ be any tall ideal. If some $\mathbb B$-represented name $\dot w$ is forced to be a new
real, then $\conv\leK\I$; in particular $\I$ is not Ramsey.
\end{proposition}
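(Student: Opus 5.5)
The plan is to manufacture, from the ground-model sets $(C_n)_n$, a refining sequence of countable (indeed finite) decompositions of $\omega$ and then apply the coherence criterion \textup{(F2)} of Lemma~\ref{lem:toolkit}, exactly as in the proof of Theorem~\ref{thm:reduction}. The point is that here the deciding antichains are generated by the Boolean combinations of the $C_n$, so countable width is automatic and properness is not needed.

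\textbf{Step 1 (decompositions).} For $\sigma\in2^n$ put $C_\sigma=\bigcap_{i<n}C_i^{\sigma(i)}$, where $C^1=C$ and $C^0=\omega\setminus C$. The sets $\{C_\sigma:\sigma\in2^n\}$ partition $\omega$ into finitely many pieces, and these partitions refine as $n$ grows. Since $\dot U$ is an ultrafilter on the quotient, $C_\sigma\Vdash\dot w\upharpoonright n=\sigma$ whenever $C_\sigma\in\Ipos$.

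\textbf{Step 2 (positivity).} Because $\dot w$ is forced new, it is not decided by any condition, so the tree $T=\{\sigma:C_\sigma\in\Ipos\}$ has no positive condition forcing a branch. In particular $\omega$ (or some positive $X$) is split positively infinitely often. Discarding the small pieces (which are finitely many at each level, hence jointly small), $\mathcal P_n=\{C_\sigma:\sigma\in T\cap2^n\}$ is, mod $\I$, a finite partition into positive pieces. It is a decomposition in the sense of \textup{(F2)}: any positive $A$ meets some piece positively, since the small pieces union to a small set.

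\textbf{Step 3 (branches).} Take a decreasing branch $C_{x\upharpoonright n}$ for some $x\in2^\omega$, and a pseudo-intersection $Y$ of it. If $Y\in\Ipos$, then $Y\le_\I C_{x\upharpoonright n}$ for all $n$, so $Y\Vdash\dot w=x\in V$, contradicting newness. Hence every branch pseudo-intersection lies in $\I$, and \textup{(F2)} yields $\conv\leK\I$ (applied with $X=\omega$, or with the positive $X$ chosen in Step~2). Then \textup{(F1)} gives $\R\leK\I$, so $\I$ is not Ramsey by Theorem~\ref{thm:kat}.

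The main obstacle is Step~2: matching the exact definition of ``decomposition'' in \cite[Lem.~3.16]{HMTU2017}, which may require infinite partitions with every piece positive and with $\I$ detected coordinatewise, whereas ours are finite. If finiteness is a problem, I would first pass to a positive $X$ below which the tree $T$ is perfect, which is possible by newness. I would then check that a finite partition suffices for the coherence argument, or, failing that, interleave levels so as to obtain countably infinite refining partitions; the finiteness should only make this easier.
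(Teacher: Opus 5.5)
Your proposal is correct and is essentially the paper's proof: the same Boolean-combination partitions $\{C_\sigma:\sigma\in2^n\}$ with the $\I$-small pieces discarded, the same observation that a positive branch pseudo-intersection would force $\dot w=x\in V$, and then \textup{(F2)} followed by \textup{(F1)}. The worry in your last paragraph is unnecessary, as the paper notes: a finite partition into positive pieces automatically detects $\I$ (a finite union of small sets is small), so you do not need to pass to a perfect subtree or to infinite partitions.
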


\begin{proof}
For $s\in2^n$ put $A_s=\bigcap_{i<n}C_i^{s(i)}$ ($C^1=C$, $C^0=\omega\setminus C$). For fixed
$n$ the $A_s$ ($s\in2^n$) partition $\omega$ into $2^n$ pieces; the non-$\I$ ones form
$\mathcal P_n$, a finite partition into positive sets, hence a decomposition (a finite union
detects $\I$). As $A_{s^\frown i}\subseteq A_s$, $\mathcal P_{n+1}$ refines $\mathcal P_n$. A
decreasing branch $(A_{x\restriction n})_n$, $x\in2^\omega$, with positive pseudo-intersection
$Y$ has $Y\le_\I C_i^{x(i)}$ for all $i$, so $Y\Vdash\dot w=x\in V$; since $\dot w$ is forced
new, no positive condition forces $\dot w\in V$, so $Y\in\I$. All branch pseudo-intersections
are $\I$-small; \textup{(F2)} gives $\conv\leK\I$, and \textup{(F1)} kills Ramsey.
\end{proof}

\begin{remark}[$\mathbb B$-representation is exactly $(\omega,2)$-distributivity, hence
vacuous here]\label{rem:Brep-vacuous}
A $\mathbb B$-represented new real is precisely a witness that $\mathbb B$ fails
$(\omega,2)$-distributivity as a Boolean algebra: the itinerary
$\dot w=(\mathbf 1[C_n\in\dot U])_n$ is new iff the family $\{[C_n]\}$ has no positive common
decider, i.e.\ no positive $X$ with $X\le_\I C_n$ or $X\le_\I\omega\setminus C_n$ for all $n$.
But a tall Ramsey $\I$ \emph{is} $(\omega,2)$-distributive in the sense of
\cite[Thm~3.13]{HMTU2017} (below every positive set some positive set decides every countable
ground-model family). Hence for a Ramsey candidate \emph{every $\mathbb B$-represented
itinerary real is forced into $V$}; Proposition~\ref{prop:Brep} never fires. The real
guaranteed by \textup{(F3)} is therefore \emph{not} $\mathbb B$-represented: all its bits'
Boolean values lie in $\mathrm{RO}(\mathbb B)\setminus\mathbb B$.
\end{remark}

\begin{proposition}[the geometric generic point is tamed]\label{prop:geom}
Normalise $\I\subseteq\nwd(\Q)$ with $\Q$ dense in $[0,1]$ (Corollary~\ref{cor:normal}). Fix
a countable basis $(J_k)$ of half-open intervals of $[0,1]$ and let
$D_k=\{q\in\Q:q\in J_k\}$. The ``itinerary of the generic point''
$\dot w=(\mathbf 1[D_k\in\dot U])_k$ is $\mathbb B$-represented, hence \textup{(}by
Remark~\ref{rem:Brep-vacuous}\textup{)} forced into $V$ for a Ramsey $\I$. Thus the quotient
adds \emph{no} new real through the convergent-sequence geometry, \emph{even when} the limit
spectrum $L_\I$ (Proposition~\ref{prop:conv}) contains a perfect set: the generic limit point
is a ground-model point, decided on a dense set of positive convergent conditions.
\end{proposition}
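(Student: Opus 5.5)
The claim has two parts: the itinerary $\dot w$ is $\mathbb B$-represented, and it is therefore forced into $V$. The plan is to check the first part straight from the definition, obtain the second from Remark~\ref{rem:Brep-vacuous}, and then read off the geometric consequence. The representation is immediate once we put $C_k=D_k$. Each $D_k=\Q\cap J_k$ is a ground-model subset of the underlying set (after the identification of Corollary~\ref{cor:normal}), and by definition $\dot w(k)=1\iff D_k\in\dot U$. So $[\![\dot w(k)=1]\!]=[D_k]\in\mathbb B$. No positivity of $D_k$ is needed, because if $D_k\in\I$ the bit is simply forced to be $0$.

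The main step is to show that $\dot w$ is forced into $V$ by using $(\omega,2)$-distributivity from Theorem~\ref{thm:standard}. Fix a condition $X\in\Ipos$. The countable ground-model family $\{D_k:k\in\omega\}$ has a positive common decider $Y\le X$: for each $k$, either $Y\le_\I D_k$ or $Y\le_\I\Q\setminus D_k$. Let $x(k)=1$ exactly when the first alternative holds, so $x\in V$. Then $Y\Vdash\dot w(k)=x(k)$ for every $k$, since $Y\in\dot U$ and $\dot U$ is an ultrafilter extending the $\I$-dual filter. Hence $Y\Vdash\dot w=\check x$, and since $X$ was arbitrary, density gives that $\dot w\in V$ is forced. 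This is exactly the argument sketched in Remark~\ref{rem:Brep-vacuous}. The only point to be careful about is that the distributivity statement is applied below an arbitrary condition, which is the localised form recorded in Theorem~\ref{thm:standard}.

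For the geometric gloss, I will show that the decided itinerary $x$ determines a ground-model point. Below the decider $Y$, the set $\{k:x(k)=1\}$ is a filter of basic intervals: it is closed under supersets and under pairwise intersection mod $\I$, and it contains a member of every finite partition of $[0,1]$ into basic half-open intervals, because a finite union of sets in $\I$ is still in $\I$. Shrinking the mesh, these intervals pin down a single point $z\in[0,1]\cap V$.

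It remains to show that positive convergent conditions realise this. By the $h$-FinBW fact quoted before Proposition~\ref{prop:conv}, $Y$ contains a positive $C$ that converges to some point $z'$. Every $J_k$ containing $z'$ in its interior almost contains $C$, so $x(k)=1$ for all such $k$, and therefore $z'=z$. This yields the dense set of positive convergent conditions deciding the generic limit point. It uses no hypothesis on $L_\I$, which is why the conclusion holds even when $L_\I$ contains a perfect set.
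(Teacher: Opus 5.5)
Your proposal is correct and follows the paper's proof. Like the paper, you represent $\dot w$ by $(D_k)_k$, use $(\omega,2)$-distributivity below an arbitrary condition to obtain a positive decider forcing $\dot w=\check x$, and pass via $h$-FinBW to a positive convergent subcondition that still decides every $D_k$; your identification of its limit $z'$ with the point $z$ pinned down by $x$ simply spells out what the paper compresses into ``and with it the limit point of the generic'' (tacitly using, as the paper does, that the basis $(J_k)$ contains arbitrarily fine intervals around each point).
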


\begin{proof}
The $D_k$ are ground-model sets and $\dot w(k)=1\iff D_k\in\dot U$, so $\dot w$ is
$\mathbb B$-represented by $(D_k)_k$, and Remark~\ref{rem:Brep-vacuous} applies verbatim:
by $(\omega,2)$-distributivity, below every positive set there is a positive $B$ deciding
every $D_k$ modulo $\I$, and such a $B$ forces $\dot w$ into $V$. These deciding
conditions are dense; below any of them the $h$-FinBW property in the reduced
presentation (\S\ref{sec:reduced}) supplies a further positive \emph{convergent}
condition, which still decides every $D_k$, hence the whole itinerary --- and with it the
limit point of the generic --- to be a ground-model object.
\end{proof}

\begin{remark}[the wrong parameter]\label{rem:subexp}
The taming in Remark~\ref{rem:Brep-vacuous} and Proposition~\ref{prop:geom} is effected by
$(\omega,2)$-distributivity, a property the candidate has. What would make the deciding
antichains of a name \emph{countable below a positive set} is \emph{properness of $\QI$ for
that name} --- and by Theorem~\ref{thm:reduction} this fails not only globally but below every
positive set, for every new-real name. The complexity of the individual added real is the
wrong parameter: a complete embedding of a ccc forcing (a Cohen real, say) into $\QI$ does not
bound the antichains of the non-proper host $\QI$, and the reals that \emph{would} come with
countable/finite antichains (the $\mathbb B$-represented ones, the geometric point) are exactly
the ones forced into $V$. The one genuinely new by-product is the crisp locus
$\mathrm{RO}(\mathbb B)\setminus\mathbb B$ for the added real, which tells future attempts to
target the completion, not the algebra.
\end{remark}

\section*{Disclosure of AI use}

The author used generative artificial-intelligence tools to assist with translating and
rewriting proofs, improving the overall structure of the paper, revising prose and grammar,
and refactoring and organizing the manuscript. All mathematical results and the final version
of the article are the sole responsibility of the author.



\begin{thebibliography}{99}

\bibitem{CDU2026}
J.~C. Cano, C.~A. Di Prisco, C. Uzc\'ategui-Aylwin,
\emph{Combinatorics of Ramsey ideals}, arXiv:2606.25477 (24 June 2026).

\bibitem{Farah1998}
I. Farah, \emph{Semiselective coideals}, Mathematika 45 (1998), 79--103.

\bibitem{GrebikHrusak2020}
J. Greb\'ik, M. Hru\v{s}\'ak, \emph{No minimal tall Borel ideal in the Katětov order},
Fund. Math. 248 (2020), 135--145.

\bibitem{GrebikUzcategui2019}
J. Greb\'ik, C. Uzc\'ategui, \emph{Bases and Borel selectors for tall families},
J. Symbolic Logic 84 (2019), 359--375.

\bibitem{GrebikVidnyanszky2023}
J. Greb\'ik, Z. Vidny\'anszky, \emph{Tall $F_\sigma$ subideals of tall analytic ideals},
Proc. Amer. Math. Soc. 151 (2023), 4043--4046.

\bibitem{HrusakKatetov2017}
M. Hru\v{s}\'ak, \emph{Katětov order on Borel ideals}, Arch. Math. Logic 56 (2017),
831--847.

\bibitem{HrusakSurvey}
M. Hru\v{s}\'ak, \emph{Ideals on countable sets: a survey with questions},
arXiv:1902.08677.

\bibitem{Kechris1995}
A. S. Kechris, \emph{Classical Descriptive Set Theory}, Graduate Texts in Mathematics 156,
Springer, 1995.

\bibitem{HMTU2017}
M. Hru\v{s}\'ak, D. Meza-Alc\'antara, E. Th\"ummel, C. Uzc\'ategui,
\emph{Ramsey type properties of ideals}, Ann. Pure Appl. Logic 168 (2017), 2022--2049.

\bibitem{KwelaSabok2015}
A. Kwela, M. Sabok, \emph{Topological representations}, J. Math. Anal. Appl. 422 (2015),
1434--1446.

\bibitem{Martin1975}
D. A. Martin, \emph{Borel determinacy}, Ann. of Math. (2) 102 (1975), 363--371.

\bibitem{PelayoGomez}
J. de J. Pelayo-G\'omez, \emph{Infinite games and Ramsey properties of $F_\sigma$ ideals},
J. Symbolic Logic (2025), 1--25,
doi:10.1017/jsl.2025.12; arXiv:1808.09088.

\bibitem{Solecki1994}
S. Solecki, \emph{Covering analytic sets by families of closed sets},
J. Symbolic Logic 59 (1994), 1022--1031.

\end{thebibliography}
\end{document}